\newtheorem{theoreme}{Theorem}[section]
\newtheorem{lemme}[theoreme]{Lemma}
\newtheorem{proposition}[theoreme]{Proposition}
\newtheorem{remarque}[theoreme]{Remark}
\newtheorem{corollaire}[theoreme]{Corollary}
\title{Remarks on the $\mathrm{CH}_2$ of cubic hypersurfaces}
\author{Ren\'e Mboro}
\date{}
\begin{document}

\begin{abstract}
%\scriptsize{
%\begin{center} 2000 \textit{Mathematics Subject Classification.} Primary: ; Secondary: .\end{center}}
This paper presents two approaches to reducing problems on $2$-cycles on a smooth cubic hypersurface $X$ over an algebraically closed field of characteristic $\neq 2$, to problems on $1$-cycles on its variety of lines $F(X)$. The first one relies on osculating lines of $X$ and Tsen-Lang theorem. It allows to prove that $\mathrm{CH}_2(X)$ is generated, via the action of the universal $\mathbb P^1$-bundle over $F(X)$, by $\mathrm{CH}_1(F(X))$. When the characteristic of the base field is $0$, we use that result to prove that if $dim(X)\geq 7$, then $\mathrm{CH}_2(X)$ is generated by classes of planes contained in $X$ and if $dim(X)\geq 9$, then $\mathrm{CH}_2(X)\simeq \mathbb Z$. Similar results, with slightly weaker bounds, had already been obtained by Pan(\cite{xua_pan}). The second approach consists of an extension to subvarieties of $X$ of higher dimension of an inversion formula developped by Shen (\cite{Shen_main}, \cite{Shen_univ}) in the case of $1$-cycles of $X$. This inversion formula allows to lift torsion cycles in $\mathrm{CH}_2(X)$ to torsion cycles in $\mathrm{CH}_1(F(X))$. For complex cubic $5$-folds, it allows to prove that the birational invariant provided by the group $\mathrm{CH}^3(X)_{tors,AJ}$ of homologically trivial, torsion codimension $3$ cycles annihilated by the Abel-Jacobi morphism is controlled by the group $\mathrm{CH}_1(F(X))_{tors,AJ}$ which is a birational invariant of $F(X)$, possibly always trivial for Fano varieties.
\end{abstract}
\maketitle
\section*{Introduction}
Let $X\subset \mathbb P^{n+1}_{\mathbb C}$ be a smooth hypersurface of degree $d\geq 2$.
Let $F_r(X)\subset G(r+1,n+2)$ be  the variety of $\mathbb P^r$'s contained in $X$ and $P_r=\mathbb{P}(\mathcal{E}_{r+1\mid F_r(X)})\subset F_r(X)\times X$ be  the  universal $\mathbb P^r$-bundle. One has the incidence correspondence
$$p_r:P_r\rightarrow  F_r(X),\,\,q_r:P_r\rightarrow X.$$
We will be particularly interested in this chapter in the cases $r=1$ and $r=2$, $d=3$.
It is known (see for example \cite{Es_Le_Vie}, \cite{vois_lect}) that if $X$ is covered by projective spaces of dimension $1\leq r< \frac{n}{2}$, that is $q_r$ is surjective,  then $\mathrm{CH}_i(X)_{\mathbb Q} \simeq\mathbb Q$ for $i< r$ and for $\frac{n}{2}> i\geq r$,  there is an inversion formula implying that  $$P_{r,*}:\mathrm{CH}_{i-r}(F_r(X))_{hom,\mathbb Q}\rightarrow\mathrm{CH}_i(X)_{hom,\mathbb Q}$$
is surjective.
We recall briefly how it works: Up to taking a desingularization and general hyperplane sections of $F_r(X)$, we can assume that $F_r(X)$ is smooth and $q_r$ is generically finite of degree $N>0$. Let $H_X=c_1(O_X(1))\in{\rm CH}^1(X)$
and $h=q_r^*H_X\in {\rm CH}^1(P_r)$. Given a cycle $\Gamma\in \mathrm{CH}_i(X)_{hom}$,   we have $N\Gamma=q_{r*}q_r^*\Gamma$ and we can write $q_r^*\Gamma=\sum_{j=0}^rh^j\cdot p_r^*\gamma_j$ where  $\gamma_j\in \mathrm{CH}_{i+j-r}(F_r(X))_{hom}$. Now, $d q_{r*}(h^j\cdot p_r^*\gamma_j)=dH_X^j\cdot q_{r*}(p_r^* \gamma_j)=0$ for $j>0$ since
 $$dH_X\cdot =i_X^*i_{X,*}:\mathrm{CH}_l(X)_{hom}\rightarrow \mathrm{CH}_{l-1}(X)_{hom},$$
 where $i_X$ is the inclusion of $X$ into $\mathbb{P}^{n+1}$,
 factors through $\mathrm{CH}_l(\mathbb P^{n+1}_{\mathbb C})_{hom}$ hence is zero. So we get
 $$dN\Gamma=q_{r*}p_r^*(d\gamma_0)$$
  which gives $\mathrm{CH}_i(X)_{hom, \mathbb Q}=0$ for $i<r$ since in this range $\mathrm{CH}_{i-r}(F(X))=0$,
  and more generally the desired surjectivity. Working a little more, this method gives,
  in the case of $2$-cycles on cubic fivefolds, the following
  result (which is a precision of \cite{Es_Le_Vie}, \cite{otwinowska}):
  \begin{proposition}\label{propajtrivial} Let $X$ be a smooth cubic fivefold. Then
  the kernel of the Abel-Jacobi map $\mathrm{CH}_2(X)_{AJ}:={\rm Ker}\,(\Phi_X:{\rm CH}_2(X)_{hom}\rightarrow J^5(X))$ is of $18$-torsion.
  \end{proposition}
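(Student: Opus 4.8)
The plan is to make the inversion recalled above effective, keeping track of the integers that occur, with two extra inputs: the relation $3H_X\cdot{}=i_X^*i_{X,*}$ on homologically trivial cycles, and the comparison of intermediate Jacobians furnished by the incidence correspondence for a cubic fivefold.

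First I would replace $F(X)$ by a convenient linear section. After desingularizing $F(X)$ and cutting it by two general hyperplane sections in the Pl\"ucker embedding I obtain a smooth fourfold $F'$, the restricted $\mathbb P^1$-bundle $P'=p^{-1}(F')$ of dimension $5$, and $q'=q|_{P'}\colon P'\to X$ which is now generically finite; an incidence count identifies its degree $N$ with the degree of the surface $F_x\subset\mathbb P^5$ of lines of $X$ through a general point $x$, which is the complete intersection cut out by the linear, quadratic and cubic parts of $X$ at $x$, hence $N=6$ (for $X$ general; in the non-generic case $N$ only divides a fixed multiple of $6$ and the argument goes through unchanged). For $\Gamma\in\mathrm{CH}_2(X)_{hom}$ I then write $q'^*\Gamma=p'^*\gamma_0+h\cdot p'^*\gamma_1$ in the projective bundle $P'$, with $h=q'^*H_X$, $\gamma_0\in\mathrm{CH}_1(F')$, $\gamma_1\in\mathrm{CH}_2(F')$; comparing cohomology classes, using that $H^*(P')=p'^*H^*(F')\oplus h\cdot p'^*H^*(F')$ together with the homological triviality of $\Gamma$, forces both $\gamma_0$ and $\gamma_1$ to be homologically trivial on $F'$.

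Next I would push everything down to $X$. The square $P'\to P_1$ over $F'\hookrightarrow F(X)$ is Cartesian with $p_1$ flat and $F'\hookrightarrow F(X)$ a regular embedding, so the base change formula gives $q'_*p'^*\gamma_0=P_{1,*}(j_*\gamma_0)$ with $j\colon F'\hookrightarrow F(X)$; setting $\eta:=j_*\gamma_0\in\mathrm{CH}_1(F(X))_{hom}$ and $\mu:=q'_*p'^*\gamma_1\in\mathrm{CH}_3(X)_{hom}$ this reads
\[
N\Gamma=P_{1,*}\eta+H_X\cdot\mu .
\]
Since $3H_X\cdot{}=i_X^*i_{X,*}$ factors through $\mathrm{CH}_*(\mathbb P^{6}_{\mathbb C})_{hom}=0$ we get $3H_X\cdot\mu=0$; and since $H^3(X)=0$ (Lefschetz), the target $J^3(X)$ of the Abel--Jacobi map $\Phi_X\colon\mathrm{CH}_3(X)_{hom}\to J^3(X)$ vanishes, so $\Phi_X(H_X\cdot\mu)=H_X\cup\Phi_X(\mu)=0$. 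Hence $3N\Gamma=P_{1,*}(3\eta)$, and if moreover $\Phi_X(\Gamma)=0$ then $\Phi_X(P_{1,*}\eta)=0$. By naturality of the Abel--Jacobi map for the correspondence $[P_1]$ this says $[P_1]_*\Phi_{F(X)}(\eta)=0$ in $J^5(X)$, where $[P_1]_*\colon J^9(F(X))\to J^5(X)$ is the induced morphism of intermediate Jacobians; since for a cubic fivefold the cylinder homomorphism $[P_1]^*\colon H^5(X,\mathbb Z)\to H^3(F(X),\mathbb Z)$ is known to be an isomorphism, $[P_1]_*$ is an isomorphism, whence $\Phi_{F(X)}(\eta)=0$, i.e. $\eta\in\mathrm{CH}_1(F(X))_{AJ}$ (and likewise $3\eta$).

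It then remains to see that $P_{1,*}(3\eta)=0$, which gives $18\Gamma=3N\Gamma=0$; here $18=3\cdot 6$ records the degree $3$ of $X$ (through $3H_X\cdot{}=i_X^*i_{X,*}$) and the degree $6$ of the surface of lines through a general point (through $N$). This last vanishing is the heart of the matter and, I expect, the main obstacle: one has to show that $P_{1,*}$ sends $\mathrm{CH}_1(F(X))_{AJ}$ into the $3$-torsion of $\mathrm{CH}_2(X)$ — equivalently, that the comparison of Jacobians above is an honest isomorphism and that an inversion in the direction $\mathrm{CH}_2(X)\to\mathrm{CH}_1(F(X))$ can be performed without introducing any denominator beyond those already recorded. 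I would attack it, in the spirit of the estimates of \cite{Es_Le_Vie} and \cite{otwinowska}, by representing $\eta$ on a smooth surface section of $F(X)$ and controlling the resulting $0$-cycle contribution via Roitman's theorem together with one further application of $3H_X\cdot{}=i_X^*i_{X,*}$ on $X$; everything preceding this step is formal.
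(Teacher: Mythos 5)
Your reduction (cutting $F_1(X)$ by hyperplane sections so that $q$ becomes generically finite of degree $6$, decomposing $q'^*\Gamma=p'^*\gamma_0+h\cdot p'^*\gamma_1$ with both $\gamma_i$ homologically trivial, and killing the term $H_X\cdot\mu$ via $3H_X\cdot{}=i_X^*i_{X,*}$) is exactly the paper's setup, but the proof is not complete: the step you yourself flag as ``the heart of the matter'' --- that $P_{1,*}(3\eta)=0$, i.e.\ that $P_{1,*}$ sends $\mathrm{CH}_1(F(X))_{AJ}$ into ($3$-)torsion --- is a genuine gap, not a finishing touch. For $1$-cycles on the $6$-dimensional $F(X)$ there is no vanishing theorem of Bloch--Srinivas type; Roitman's theorem concerns torsion $0$-cycles and your $\eta$ is neither a $0$-cycle nor torsion, and representing it on a surface section of $F(X)$ does not convert its Abel--Jacobi class into an Albanese class. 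In fact the statement you would need is essentially the open expectation, discussed in Section \ref{secderniere} after Corollary \ref{cor_tors_abel_j}, that the Abel--Jacobi kernel on $\mathrm{CH}_1$ of a rationally connected variety is trivial; the paper's Theorem \ref{prop_eq} goes in the \emph{opposite} direction, controlling $\mathrm{CH}^3(X)_{tors,AJ}$ by $\mathrm{CH}_1(F(X))_{tors,AJ}$, precisely because no such vanishing is known.

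The paper closes this gap by never working with the $1$-cycle $\gamma_0$ alone: since $\Gamma$ is homologically trivial, $3H_X\cdot\Gamma=i_X^*i_{X,*}\Gamma=0$ in $\mathrm{CH}^4(X)$; pulling this relation back to the $\mathbb P^1$-bundle and using $h^2=h\cdot p^*l-p^*c_2$ (with $l,c_2$ the Chern classes restricted from the Grassmannian) gives $3\gamma_0=-3l\cdot\gamma_1$ in $\mathrm{CH}^3(F_1(X))$, hence $18\Gamma=q_*p^*(3\gamma_1\cdot l^{3})$, where $3\gamma_1$ is now a \emph{codimension $2$} cycle on $F_1(X)$, homologically trivial and Abel--Jacobi trivial (this uses $\Phi_X(\Gamma)=0$ through the compatibility of the projective-bundle decomposition with Abel--Jacobi/Deligne classes). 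Codimension $2$ is exactly where a vanishing theorem exists: by \cite[Theorem 1 (i)]{Bl-Sr} and the rational connectedness of $F_1(X)$ one has $\mathrm{CH}^2(F_1(X))_{AJ}=0$, whence $18\Gamma=0$. So the missing idea in your proposal is to use the relation $3H_X\cdot\Gamma=0$ a second time, inside the bundle $P$, to trade $\gamma_0$ for the codimension-$2$ cycle $\gamma_1$, rather than to attempt an (unavailable) Abel--Jacobi injectivity statement for $1$-cycles on $F(X)$.
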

\begin{proof} For cubic hypersurfaces of dimension $\geq 3$, after taking hyperplane sections of $F_1(X)$, the degree of the generically finite morphism $P_1\rightarrow X$ is $6$.
If $\Gamma\in{\rm CH}_2(X)_{AJ}$, we can use the fact that
$3H_X\cdot \Gamma=0$ in ${\rm CH}^4(X)$ and
thus (using the notation $P,\,p,\,q$ in this case)
\begin{eqnarray}
\label{eqintro3jan}
3h\cdot q^*\Gamma= 3(h\cdot p^*\gamma_0+h^2\cdot p^*\gamma_1)=0
\end{eqnarray}
in ${\rm CH}^4(P)$. As $h^2=h\cdot p^*l-p^*c_2$ in ${\rm CH}^2(P)$, where $l$ and $c_2$ are the natural
Chern classes on $F_1(X)$ restricted from the Grassmannian,
we deduce from (\ref{eqintro3jan}):
$$3\gamma_0=-3l\cdot \gamma_1 \,\,{\rm in}\,\,{\rm CH}^3(F_1(X)).$$

Combining this with the previous  argument then gives
$18\Gamma=q_{*}p^*(3\gamma_1\cdot l^{3})$ where $3\gamma_1$ is a codimension $2$-cycle homologous to $0$ and Abel-Jacobi equivalent to $0$ on  $F_1(X)$. Finally we conclude using \cite[Theorem 1 (i)]{Bl-Sr} and the fact that $F_1(X)$ is rationally connected, which implies that
${\rm CH}^2(F_1(X))_{AJ}=0$.
\end{proof}
\indent
 The denominators appearing in the above argument do not allow to understand $2$-torsion cycles.
 On the other hand, as smooth cubic hypersurfaces admit a degree $2$ unirational parametrization (\cite{Cl-Gr}), all functorial birational invariants are $2$-torsion so that, for functorial birational invariant constructed using torsion cycles, the above method gives no interesting information.
 Our aim in this chapter is to give inversion formulas with integral coefficients,
 allowing in some cases to also control the torsion of the group of  cycles, which is especially important for those hypersurfaces in view of rationality problems.

 In this chapter, we present two approaches to study the surjectivity of the map
  $P_{1*}$ on cycles with integral coefficient  for cubic hypersurfaces.
  The first one is presented in the first section and uses the osculating lines of $X$; it gives the following result:
\begin{theoreme}\label{thm_intro1} Let $X\subset \mathbb P^{n+1}_k$, with $n\geq 2^i+1$ be a smooth cubic hypersurface over an algebraically closed field $k$ of characteristic not equal to $2$, containing a linear subspace of dimension
$i<\frac{n}{2}$. Assuming resolution of singularities in dimension $\leq i$, $P_{1,*}:\mathrm{CH}_{i-1}(F_1(X))\rightarrow \mathrm{CH}_i(X)$ is surjective.
\end{theoreme}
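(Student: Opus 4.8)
The plan is to reduce, by linearity and since $\mathrm{CH}_i(X)$ is generated by classes of subvarieties, to the case $\Gamma=[Z]$ with $Z\subset X$ irreducible reduced of dimension $i$, and then to produce two rational equivalences placing $2[Z]$ and $3[Z]$ in $\mathrm{Im}\,P_{1,*}$; since $[Z]=3[Z]-2[Z]$, this gives the surjectivity. Write $K=k(Z)$, of transcendence degree $i$ over $k$, hence a $C_i$-field by Tsen--Lang, let $\eta\in X(K)$ be the generic point of $Z$, and fix a linear subspace $L\cong\mathbb P^i\subset X$. Two easy reductions handle degenerate cases: if $Z\subseteq L$ then $[Z]\in\mathrm{CH}_i(L)=\mathbb Z\cdot[L]$ and $[L]\in\mathrm{Im}\,P_{1,*}$ since $L$ is swept out by the lines through one of its points; and if the family of lines produced below consists of lines contained in $X$, then $Z$ is ruled by lines of $X$ and $[Z]\in\mathrm{Im}\,P_{1,*}$ directly. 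Assume we are in neither situation.

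For the factor $3$ I would use osculating lines and Tsen--Lang. At a point $x\in X$ the lines osculating $X$ to order $3$ at $x$ and not contained in $X$ correspond to the points of a quadric hypersurface in $\mathbb P(T_xX)\cong\mathbb P^{n-1}$ — cut out by the second order term of the equation of $X$ at $x$ — lying off an auxiliary cubic; the hypothesis $\mathrm{char}\,k\neq2$ makes this second order picture behave as in characteristic $0$. Over $K$ this is a quadratic form in $n$ variables, and $n\geq 2^i+1>2^i$, so it is isotropic over the $C_i$-field $K$: $X$ carries an osculating line at $\eta$ defined over $K$. Spreading out, and resolving the singularities of $Z$ so that the family of osculating lines extends to a morphism from a smooth model $\widetilde Z\to Z$ (this is where resolution in dimension $\leq i$ enters, and is what makes the limit lines controllable), I get a family $\{\ell_x\}$ of osculating lines of $X$ over $\widetilde Z$; let $\mathcal R\subset\mathbb P^{n+1}$ be the closure of its union, irreducible of dimension $i+1$ (it cannot be smaller: the $\ell_x$ are not in $X$ whereas $Z\subset\mathcal R$). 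Since $[X]=3H$, the Gysin map $i_X^{!}\colon\mathrm{CH}_{i+1}(\mathbb P^{n+1})\to\mathrm{CH}_i(X)$ sends $[\mathcal R]=(\deg\mathcal R)[\mathbb P^{i+1}]$ to $(\deg\mathcal R)\,\theta$, with $\theta:=i_X^{!}[\mathbb P^{i+1}]$; evaluating $\theta$ on a $\mathbb P^{i+1}\supset L$ gives $\theta=[L]+[Q]$ with $Q$ a smooth $i$-dimensional quadric in $X$, and both summands lie in $\mathrm{Im}\,P_{1,*}$, each being swept out by lines of $X$ (for $Q$ when $i\geq2$; for $i=1$, $\theta$ is realized by a plane section of $X$ splitting into three lines). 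On the other hand $i_X^{!}[\mathcal R]=[\mathcal R\cap X]$, and because each $\ell_x$ meets $X$ in the divisor $3x$, the only $i$-dimensional components of $\mathcal R\cap X$ are $Z$ with multiplicity $3$ and loci swept by limit lines of the family lying in $X$, the latter parametrized by the image in $F_1(X)$ of the exceptional locus of $\widetilde Z\to Z$, of dimension $\leq i-1$, hence contributing a class in $\mathrm{Im}\,P_{1,*}$. Comparing the two expressions gives $3[Z]\in\mathrm{Im}\,P_{1,*}$.

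For the factor $2$ I would use tangent lines through $L$. As $\dim L+\dim T_\eta X=i+n\geq n+1$, the intersection $L_K\cap T_\eta X$ is a nonempty linear subspace of $\mathbb P^{n+1}_K$, hence has a $K$-point $p\neq\eta$; the line $\langle\eta,p\rangle$ lies in $T_\eta X$, so meets $X$ at $\eta$ with multiplicity $\geq2$, and since $p\in L\subset X$ lies on it, its intersection divisor with $X$ is forced to be $2\eta+p$. Spreading this out (again resolving $Z$) gives a family $\ell'_x=\langle x,\varphi(x)\rangle$ with $\varphi(x)\in L$ and $\ell'_x\cap X=2x+\varphi(x)$; with $\mathcal R'$ the closure of its union, of dimension $i+1$ (otherwise $Z$ is ruled by lines of $X$, the case already treated), the same Gysin computation gives $(\deg\mathcal R')\,\theta=[\mathcal R'\cap X]=2[Z]+\varepsilon$, where $\varepsilon$ collects a multiple of $[L]$ (from $\overline{\varphi(\widetilde Z)}\subseteq L$) together with the line-swept limit contributions, all lying in $\mathrm{Im}\,P_{1,*}$; hence $2[Z]\in\mathrm{Im}\,P_{1,*}$. (Alternatively one can obtain $2[Z]\in\mathrm{Im}\,P_{1,*}$ from the Clemens--Griffiths degree $2$ unirational parametrization of $X$, by pushing forward a linear section.) Combining, $[Z]=3[Z]-2[Z]\in\mathrm{Im}\,P_{1,*}$.

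I expect the main obstacle to be the intersection-theoretic bookkeeping in the two displayed computations: showing that, after spreading out over a smooth model of $Z$, the cycle $\mathcal R\cap X$ (resp. $\mathcal R'\cap X$) acquires no spurious $i$-dimensional component beyond $Z$ and the line-swept loci, that the multiplicity along $Z$ is exactly $3$ (resp. $2$), and that the degenerate configurations — the osculating quadric being entirely contained in $X$, the family already consisting of lines of $X$, or $Z\subseteq L$ — are correctly isolated and seen to make the statement trivial. It is precisely for these steps that a smooth model of $Z$, hence resolution of singularities in dimension $\leq i$, is required.
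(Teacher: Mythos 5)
Your skeleton (produce $3[Z]$ and $2[Z]$ in $\mathrm{Im}\,P_{1,*}$ and subtract) is the paper's, and your factor-$3$ step is essentially Theorem \ref{thm_form_osc} (Tsen--Lang applied to the quadric of osculating directions, spread out over a resolution of $Z$, then intersected with $X$); your factor-$2$ step via tangent lines through the fixed $L$ replaces the paper's route (the Clemens--Griffiths degree-$2$ unirational parametrization, the blow-up formula, and a nodal hyperplane section to absorb the term $H_X\cdot P_*D_\Gamma$, as in Theorem \ref{prop_gen_5}), and your $L\cup Q$ device for putting multiples of $H_X^{n-i}$ into the image plays the role of Proposition \ref{prop_ruled_hyperpl}. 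However, there is a genuine gap in how you dispose of the degenerate case in which the construction only yields lines contained in $X$: you claim that then ``$Z$ is ruled by lines of $X$ and $[Z]\in\mathrm{Im}\,P_{1,*}$ directly''. This is false as stated. The lines of the family pass through points of $Z$ but are not contained in $Z$; their union is in general $(i+1)$-dimensional, and the mere existence of a line of $X$ through each point of $Z$ gives nothing --- for a cubic of dimension $\geq 3$ every point of $X$ lies on a line of $X$, so this reasoning would ``prove'' the theorem for every $Z$ with no further work. The degenerate case really occurs (Tsen--Lang only provides a $K$-point of the quadric $\{f_2=0\}$, which may also satisfy $f_3=0$), and the paper handles it by a different computation (Case 1 of the proof of Theorem \ref{thm_form_osc}): writing the class of the section in the pulled-back $\mathbb{P}^1$-bundle as $f^*H_X+p_\Sigma^*D$, pushing forward to get $\Sigma=H_X\cdot f_*(1)+P_*(p_*\tilde\sigma_*D)$, and using that $3H_X\cdot(\cdot)=i_X^*i_{X,*}$ lands in $\mathbb{Z}\cdot H_X^{n-i}$. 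The same unhandled degeneration infects your factor-$2$ family: the line $\langle\eta,p\rangle$, tangent at $\eta$ and meeting $L$, may lie in $X$ (and if the union $\mathcal R'$ is $(i+1)$-dimensional but contained in $X$, the proper-intersection identity $i_X^![\mathcal R']=[\mathcal R'\cap X]$ you use is no longer available); here no Case-1-type fallback rescues you, since that computation returns a relation on $3[Z]$, not $2[Z]$. You would need either to show the $K$-point can be chosen off the locus of lines contained in $X$ (which requires an argument, e.g.\ density of $K$-points on the quadric, and can genuinely fail for the tangent-line family), or to adopt the paper's Case 1 / unirational-parametrization arguments.

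Two smaller points. The residual ``line-swept'' contributions should be seen to lie in $\mathrm{Im}\,P_{1,*}$ with their multiplicities by factoring through the correspondence: on $\mathbb{P}_{\widetilde Z}$ one has $f^*X=3\,\eta(\widetilde Z)+p_{\widetilde Z}^*A$ (resp.\ $2\eta_1+\eta_2+p_{\widetilde Z}^*A$) with $A$ supported on the locus where the line lies in $X$, and $f_*p_{\widetilde Z}^*A=P_{1,*}((p\circ\tilde\sigma)_*A)$; the mere fact that the support is swept by an $(i-1)$-dimensional family of lines does not suffice, since such a family can sweep its image with degree $>1$, and that locus is a proper closed subset of $\widetilde Z$, not necessarily the exceptional locus of $\widetilde Z\to Z$. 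On the positive side, your observation that $\theta=i_X^![\mathbb P^{i+1}]$ decomposes as $[L]+[Q]$ with both pieces in $\mathrm{Im}\,P_{1,*}$ is a useful complement: the paper's Proposition \ref{prop_ruled_hyperpl} only treats $H_X^{n-2}$, whereas the statement of Theorem \ref{thm_intro1} needs $H_X^{n-i}\in\mathrm{Im}\,P_{1,*}$ for the given $i$.
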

In the case where $i=2$, the theorem associates to any $2$-cycle a $1$-cycle on $F_1(X)$. As, for $i=2$, the condition to apply the theorem is $dim_k(X)\geq 5$, $F_1(X)$ is a smooth Fano variety hence separably rationally
 connected in characteristic $0$.  By work of Tian and Zong (\cite{Tian-Zong}), $\mathrm{CH}_1(F_1(X))$ is then generated by  classes of rational curves. A direct consequence is the following:
\begin{corollaire}\label{cor_intro1} Let $X\subset \mathbb P^{n+1}_k$ be a smooth cubic hypersurface over an algebraically closed field $k$ of characteristic $0$. If $n\geq 5$, then $\mathrm{CH}_2(X)$ is generated by cycle classes of rational surfaces.
\end{corollaire}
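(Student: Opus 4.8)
The plan is to read the corollary off from Theorem~\ref{thm_intro1} applied with $i=2$, using what is recalled above about $1$-cycles on rationally connected varieties. The first thing to check is that, for $i=2$, the hypotheses of Theorem~\ref{thm_intro1} are automatic once $n=\dim_k X\ge 5$: indeed $n\ge 5=2^2+1$, the condition $i<n/2$ is exactly $n>4$, resolution of singularities in dimension $\le 2$ is classical (surfaces), and every smooth cubic $n$-fold with $n\ge 5$ contains a plane. For this last point one notes that $F_2(X)\subset G(3,n+2)$ is the zero locus of a section of the globally generated bundle $\mathrm{Sym}^3\mathcal S^\vee$ (of rank $10$) on a Grassmannian of dimension $3n-3$; since $3n-13\ge 2>0$ for $n\ge 5$ and the top Chern class of this bundle does not vanish, $F_2(X)\ne\emptyset$ (this is the Debarre--Manivel non-emptiness statement). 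Hence Theorem~\ref{thm_intro1} yields that $P_{1,*}\colon\mathrm{CH}_1(F_1(X))\to\mathrm{CH}_2(X)$ is surjective.

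Next I would record that $F_1(X)$ is a smooth Fano variety for $n\ge 5$: it is smooth of dimension $2n-4$, and adjunction from $G(2,n+2)$, using $-K_{G(2,n+2)}=(n+2)\,\sigma_1$ and $c_1(\mathrm{Sym}^3\mathcal S^\vee)=6\,\sigma_1$, gives $-K_{F_1(X)}=(n-4)\,\sigma_1|_{F_1(X)}$, which is ample as soon as $n\ge 5$. So $F_1(X)$ is rationally connected over $k$ (we are in characteristic $0$), and by the Tian--Zong theorem recalled above $\mathrm{CH}_1(F_1(X))$ is generated by the classes $[C]$ of irreducible rational curves $C\subset F_1(X)$. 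Combining with the previous step, $\mathrm{CH}_2(X)$ is generated by the classes $P_{1,*}[C]=q_{1,*}p_1^*[C]$.

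The last step is to see that each $P_{1,*}[C]$ is an integer multiple of the class of a rational surface contained in $X$. Since $p_1$ is a $\mathbb P^1$-bundle, $p_1^*[C]=[p_1^{-1}(C)]$, and $p_1^{-1}(C)\to C$ is a $\mathbb P^1$-bundle; pulling it back along the normalization $\mathbb P^1\to C$ (which is $\mathbb P^1$ because $C$ is rational) produces a $\mathbb P^1$-bundle $W$ over $\mathbb P^1$ --- a Hirzebruch surface --- together with a morphism $g\colon W\to X$ such that $g_*[W]=q_{1,*}p_1^*[C]$. If $\dim g(W)\le 1$ the class is $0$; otherwise $S:=g(W)$ is a surface, $g_*[W]=(\deg g)\,[S]$, and $S$ is dominated by the rational surface $W$, hence unirational, hence rational by Castelnuovo's criterion --- this is the one place where $\mathrm{char}\,k=0$ enters. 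Therefore $\mathrm{CH}_2(X)$ is generated by integer multiples of classes of rational surfaces in $X$, which is exactly the assertion of the corollary.

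I do not anticipate a genuine obstacle, since the substantive input is Theorem~\ref{thm_intro1} together with the Tian--Zong theorem; the only two points requiring a little attention are the uniform non-emptiness of $F_2(X)$ for all smooth cubic $n$-folds with $n\ge 5$ (settled by the Euler-class remark above) and the degenerate case in which $q_1$ contracts $p_1^{-1}(C)$ to a curve, i.e. the lines parametrized by $C$ sweep out a cone, which contributes zero and so causes no trouble.
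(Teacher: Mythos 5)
Your proposal is correct and follows essentially the same route as the paper: apply Theorem \ref{thm_intro1} with $i=2$ (equivalently Theorem \ref{prop_gen_5} plus Proposition \ref{prop_ruled_hyperpl}) to get surjectivity of $P_{1,*}$, invoke Tian--Zong to generate $\mathrm{CH}_1(F_1(X))$ by rational curves, and observe that pushing a rational curve through the universal $\mathbb P^1$-bundle yields (a multiple of) a rational surface class. The extra details you supply --- non-emptiness of $F_2(X)$, the Fano computation for $F_1(X)$, and the degenerate case where the ruled surface is contracted --- are exactly the points the paper leaves implicit, and they check out.
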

\begin{remarque}{\rm This result is true for a  different reason also in dimension $4$, see
Proposition \ref{provoisin}.}
\end{remarque}
In the second section, we study $1$-cycles on $F_1(X)$ in order to prove that, in some cases, we can take as generators of $\mathrm{CH}_1(F_1(X))$ only the ``lines'' i.e. the rational curves of degree $1$, of $F_1(X)$. We obtain the following result:
\begin{theoreme} Let $X\subset \mathbb P^{n+1}_k$ be a smooth hypersurface of degree $d$ over an algebraically closed field $k$ of characteristic $0$. If $\frac{d(d+1)}{2}<n$ and $F_1(X)$ is smooth then $\mathrm{Griff}_1(F_1(X))=0$. Moreover, $\mathrm{CH}_1(F_1(X))$ is generated by lines.
\end{theoreme}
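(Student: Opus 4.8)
Before reading the proof, here is the route I would take. The first point is that the hypothesis $\frac{d(d+1)}{2}<n$ is precisely the assertion that $F_1(X)$ is covered by lines. A line of the Grassmannian $G(2,n+2)$ is the pencil of lines of $\mathbb{P}^{n+1}$ through a fixed point $x$ and contained in a fixed plane $\Pi\ni x$; since a plane curve of degree $d$ contains only finitely many lines, such a pencil lies in $F_1(X)$ if and only if $\Pi\subset X$. So the lines of $F_1(X)$ are parametrized by the universal plane $P_2\to F_2(X)$, the line attached to $(\Pi,x)\in P_2$ being $\{[\ell]\in F_1(X):x\in\ell\subset\Pi\}$, and their union is the image of the evaluation $e\colon\widetilde{P}\to F_1(X)$ from the $\mathbb{P}^1$-bundle $\widetilde{P}\to P_2$ whose fibre over $(\Pi,x)$ is the pencil of lines of $\Pi$ through $x$. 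Now for a general line $\ell\subset X$ the planes of $X$ containing $\ell$ form a scheme cut out in the $\mathbb{P}^{n-1}$ of planes through $\ell$ by equations of degrees $1,2,\dots,d$, of expected dimension $n-1-\frac{d(d+1)}{2}$; when $\frac{d(d+1)}{2}<n$ this is $\geq 0$ and, by a normal-bundle computation, nonempty, so $e$ is dominant: $F_1(X)$ is covered by lines. The same inequality makes $F_2(X)$ nonempty and connected, so all lines of $F_1(X)$ are algebraically equivalent; and, $F_1(X)$ being smooth, a general line $L$ is free, with $\deg N_{L/F_1(X)}=n-\frac{d(d+1)}{2}\geq 1$.

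To show $\mathrm{CH}_1(F_1(X))$ is generated by lines I would first reduce to rational curves and then degenerate those. The reduction is Tian--Zong: $F_1(X)$ is Fano in characteristic $0$, hence separably rationally connected, so $\mathrm{CH}_1(F_1(X))$ is generated by classes of rational curves. A rational curve $R\subset F_1(X)$ is, via $p_1,q_1$, the same datum as a rational scroll $S_R\subset X$ ruled by lines, and the aim is to degenerate $S_R$ inside $X$ --- keeping it ruled by lines --- into a fan of planes of $X$, whose rulings are lines of $F_1(X)$. I would argue by induction on the $l$-degree $e$ of $R$, $l$ being the Plücker polarization; $e=1$ is a line (moved within the connected family of the first step). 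For the inductive step one pulls $[R]$ back along $e$ and decomposes $e^{*}[R]$ by the projective bundle formula for $\widetilde{P}\to P_2$: the summand pulled back from $P_2$ pushes down to an integral combination of lines of $F_1(X)$, while the other summand becomes, by the projection formula, the class of a hyperplane section of a surface of $F_1(X)$ ruled by lines --- a curve of smaller $l$-degree, for a suitable choice of the auxiliary family of planes of $X$ --- so the induction closes.

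The genuinely hard part will be to run this with integral coefficients rather than after inverting $\deg e$: the naive pull--push produces $(\deg e)\,[R]$, and removing the multiplicity forces one to replace a multisection by an honest section of the family over $R$ of planes of $X$ through the moving line of $R$. Producing such a section is exactly where the Tsen--Lang theorem and the integral inversion formulas that are the subject of this paper must enter, and it is the reason the sharp hypothesis $\frac{d(d+1)}{2}<n$ --- rather than some coarser Tsen--Lang--type inequality --- is the pertinent one. Verifying that the residual ruled-surface section really does drop in $l$-degree at each stage, so that the induction terminates, is a further technical matter.

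Finally, $\mathrm{Griff}_1(F_1(X))=0$ is a formal consequence. All lines being algebraically equivalent, $\mathrm{CH}_1(F_1(X))/{\sim_{\mathrm{alg}}}$ is cyclic, generated by a single line class $[L]$; since the $l$-degree is a nonzero linear form on $\mathrm{CH}_1(F_1(X))/{\sim_{\mathrm{hom}}}$ taking the value $1$ at $[L]$, the canonical surjection $\mathrm{CH}_1(F_1(X))/{\sim_{\mathrm{alg}}}\twoheadrightarrow\mathrm{CH}_1(F_1(X))/{\sim_{\mathrm{hom}}}$ maps a cyclic group onto an infinite cyclic group and is therefore an isomorphism. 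Hence algebraic and homological equivalence agree on $1$-cycles of $F_1(X)$, i.e. $\mathrm{Griff}_1(F_1(X))=0$.
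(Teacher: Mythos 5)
Your use of Tian--Zong to reduce to rational curves, and your closing formal argument for $\mathrm{Griff}_1(F_1(X))=0$ (a cyclic $\mathrm{CH}_1/\mathrm{alg}$ mapping onto $\mathbb Z$ via the Pl\"ucker degree), are correct and play the same role as in the paper. But the heart of the theorem --- that a rational curve of degree $e\geq 2$ on $F_1(X)$ is equivalent, \emph{with integral coefficients}, to a sum of lines --- is not proved in your proposal: you sketch a pull--push along the $\mathbb P^1$-bundle $\widetilde P\to P_2$ of pencils of lines in planes of $X$, and then explicitly concede the two points on which everything hinges (removing the factor $\deg e$ by producing a section of the family of planes through the moving line, and checking that the residual term has strictly smaller degree so the induction terminates), deferring them to ``Tsen--Lang and the inversion formulas''. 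As it stands this is a plan, not a proof: no $C_1$-type statement is actually set up over $k(R)$, and nothing controls the residual cycle. The paper's argument at this point is quite different and concrete (Theorem \ref{thm_lines_alg_eq}): degree-$e$ morphisms $\mathbb P^1\to G(2,n+2)$ are parametrized by a projective space $\mathbb P$ of pairs of $(n+2)$-tuples of binary forms; the condition of landing in $F_1(X)$ is cut out by $d+1+e\frac{d(d+1)}{2}$ equations; Hartshorne's connectedness theorem (this is where $\frac{d(d+1)}{2}<n$ enters, via $e(n-\frac{d(d+1)}{2})>1$) connects the given curve, inside $\mathbb P_X$ minus the degenerate locus, to the locus of tuples with a common factor; factoring out the common factor and a bend-and-break argument degenerate the curve into rational curves of degree $<e$, and induction on $e$ gives algebraic equivalence to an integral sum of lines.

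There is a second gap: even granting your degeneration, you would only obtain generation by lines modulo \emph{algebraic} equivalence, whereas the statement asserts generation of $\mathrm{CH}_1(F_1(X))$ itself. The paper needs a separate step for this (Theorem \ref{thm_gen_lines}): by Tian--Zong's Proposition 3.1 together with chain-connectedness of $F_1(X)$ by lines (Debarre--Manivel), there is an integer $N$ such that $NC$ is \emph{rationally} equivalent to a $\mathbb Z$-combination of lines for every $1$-cycle $C$; combined with the divisibility of $\mathrm{CH}_1(F_1(X))_{alg}$, this upgrades the algebraic equivalence to rational equivalence. Your proposal does not address this passage at all --- your pull--push identity in Chow groups would have done so had it worked integrally, but that is exactly the part you left open.
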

This theorem has the following consequence in the case of cubic hypersurfaces:
\begin{corollaire}\label{prop_intro2} Let $X\subset \mathbb P^{n+1}_k$ be a smooth cubic hypersurface over an algebraically closed field $k$ of characteristic $0$. If $n\geq 7$, then $\mathrm{CH}_2(X)$ is generated by classes of planes $\mathbb P^2\subset X$ and therefore ${\rm CH}_2(X)_{hom}={\rm CH}_2(X)_{alg}$. If $n\geq 9$, then $\mathrm{CH}_2(X)\simeq \mathbb Z$.
\end{corollaire}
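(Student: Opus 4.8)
The plan is to combine the two theorems of the introduction, the key point being that, through $P_{1,*}$, a \emph{line} of $F_1(X)$ in the Pl\"ucker embedding always produces a \emph{plane} contained in $X$. Indeed, a line of $G(2,n+2)$ is the pencil $\{L:\ p\in L\subset\Pi\}$ of lines of $\mathbb P^{n+1}$ through a fixed point $p$ lying in a fixed plane $\Pi\ni p$; if this pencil is contained in $F_1(X)$, then all its members lie on $X$, hence so does the plane $\Pi$ they sweep out. Moreover $p_1^{-1}(\ell)$ is a $\mathbb P^1$-bundle over $\ell\cong\mathbb P^1$ which $q_1$ maps birationally onto $\Pi\cong\mathbb P^2$ (a general point of $\Pi$ lies on exactly one line of the pencil), so $P_{1,*}[\ell]=q_{1,*}p_1^*[\ell]=[\Pi]$ in $\mathrm{CH}_2(X)$.

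From this the first assertion follows quickly. When $n\ge 7$ one has $n\ge 2^2+1$ and $2<\tfrac n2$, $X$ contains a plane ($F_2(X)$ is non-empty, by a standard Chern class computation on $G(3,n+2)$), and $F_1(X)$ is smooth of dimension $2n-4$ (classical for a smooth cubic, since $H^1(L,\mathcal N_{L/X})=0$ for every line $L\subset X$). Hence Theorem~\ref{thm_intro1} applies and $P_{1,*}\colon\mathrm{CH}_1(F_1(X))\to\mathrm{CH}_2(X)$ is surjective; as moreover $\tfrac{d(d+1)}2=6<n$, the second theorem above gives $\mathrm{Griff}_1(F_1(X))=0$ and that $\mathrm{CH}_1(F_1(X))$ is generated by lines. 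Together with the first paragraph, $\mathrm{CH}_2(X)$ is generated by the classes $[\Pi]$ of planes $\mathbb P^2\subset X$. To deduce $\mathrm{CH}_2(X)_{hom}=\mathrm{CH}_2(X)_{alg}$: by the Lefschetz hyperplane theorem $H_4(X,\mathbb Z)\simeq\mathbb Z$, generated by the class of a linear $\mathbb P^2$, so all planes in $X$ are homologous and every homologically trivial $2$-cycle is a $\mathbb Z$-combination of differences $[\Pi]-[\Pi']$. Writing $[\Pi]=P_{1,*}[\ell_\Pi]$ with $\ell_\Pi$ a line of the $\mathbb P^2\subset F_1(X)$ parametrizing the lines lying on $\Pi$, and using that all lines of $F_1(X)$ are homologous (Barth--Lefschetz gives $H_2(F_1(X),\mathbb Z)\simeq\mathbb Z$ for $F_1(X)\subset G(2,n+2)$ once $n\ge 5$), we obtain $[\ell_\Pi]-[\ell_{\Pi'}]\in\mathrm{CH}_1(F_1(X))_{hom}=\mathrm{CH}_1(F_1(X))_{alg}$, hence $[\Pi]-[\Pi']=P_{1,*}\bigl([\ell_\Pi]-[\ell_{\Pi'}]\bigr)\in\mathrm{CH}_2(X)_{alg}$.

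For the last statement I would use that $F_2(X)$ is Fano precisely when $n\ge 9$. Indeed $F_2(X)\subset G(3,n+2)$ is the zero scheme of a section of $\mathrm{Sym}^3\mathcal U^\vee$, where $\mathcal U$ is the rank-$3$ tautological subbundle; this is a rank-$10$ bundle with determinant $(\det\mathcal U^\vee)^{\otimes 10}$, so by adjunction $K_{F_2(X)}=\bigl(K_{G(3,n+2)}+10\,\sigma_1\bigr)\big|_{F_2(X)}=(8-n)\,\sigma_1\big|_{F_2(X)}$, which is anti-ample exactly for $n\ge 9$. Since $F_2(X)$ is connected (Debarre--Manivel), it is then a Fano variety, hence rationally connected (over a field of characteristic $0$), hence rationally chain connected — passing to a resolution if $F_2(X)$ is singular, which is harmless for $\mathrm{CH}_0$ given the mildness of the singularities of $F_2(X)$. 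Thus any two points of $F_2(X)$ are joined by a chain of rational curves, and restricting the universal plane $P_2\subset F_2(X)\times X$ to the $\mathbb P^1$'s of that chain shows that any two planes $\mathbb P^2\subset X$ are rationally equivalent in $X$. Combined with the first part, $\mathrm{CH}_2(X)=\mathbb Z\cdot[\Pi]$, and since $[\Pi]$ generates $H_4(X,\mathbb Z)\simeq\mathbb Z$ it has infinite order, so $\mathrm{CH}_2(X)\simeq\mathbb Z$.

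The main difficulty I expect lies not in any new idea but in the control of the auxiliary varieties: checking that $F_1(X)$ is smooth for \emph{every} smooth cubic (so that the second theorem is applicable), pinning down the Barth--Lefschetz range forcing all lines of $F_1(X)$ to be homologous, and — most delicately — handling a possibly singular $F_2(X)$, which does occur for some smooth cubics, in the rational-connectedness step. All the substantive content (surjectivity of $P_{1,*}$ with integral coefficients, generation of $\mathrm{CH}_1(F_1(X))$ by lines) is provided by the two theorems recalled above.
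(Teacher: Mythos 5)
Your treatment of the case $n\ge 7$ is essentially the paper's own argument: surjectivity of $P_{1,*}\colon\mathrm{CH}_1(F(X))\to\mathrm{CH}_2(X)$ (Theorem \ref{thm_intro1}, in the body Proposition \ref{prop_ruled_hyperpl}), generation of $\mathrm{CH}_1(F(X))$ by lines (Theorem \ref{thm_gen_lines}), and the observation that a line of $F(X)$ is the pencil of lines through a point of a plane $\Pi\subset X$, so that its image under $P_{1,*}$ is $[\Pi]$ — exactly the identification the paper uses. Your derivation of ${\rm CH}_2(X)_{hom}={\rm CH}_2(X)_{alg}$ (Lefschetz on $X$, a Barth--Lefschetz type statement $H_2(F(X),\mathbb Z)\simeq\mathbb Z$, then Theorem \ref{thm_lines_alg_eq}) is more explicit than the paper's terse reference but is consistent with it; in the range $n\ge 7$ the needed Lefschetz-type statement for $F(X)\subset G(2,n+2)$ is available (Debarre--Manivel), though you should cite it precisely.

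For $n\ge 9$ you depart from the paper, and this is where there is a genuine gap. You argue: $F_2(X)$ is Fano by adjunction, hence rationally (chain) connected, hence any two planes of $X$ are rationally equivalent. But for an arbitrary smooth cubic, $F_2(X)$ need not be smooth (unlike $F_1(X)$: smoothness at $[\Pi]$ amounts to surjectivity of the map $H^0(\mathcal O_\Pi(1))^{\oplus(n-1)}\to H^0(\mathcal O_\Pi(3))$ induced by the partials of the equation restricted to $\Pi$, which can fail even though the partials have no common zero), and it is not a priori of the expected dimension, so the adjunction computation only produces an anti-ample dualizing sheaf under an l.c.i.\ hypothesis. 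For a singular ``Fano'' variety, rational (chain) connectedness is not automatic: the results of Campana/Koll\'ar--Miyaoka--Mori, or Zhang/Hacon--McKernan in the singular case, require smoothness or klt singularities, and your fallback ``pass to a resolution, the singularities are mild'' is unsubstantiated — no control of ${\rm Sing}(F_2(X))$ is established, and rational chain connectedness is not preserved under resolution (think of the cone over an elliptic curve), so resolving does not repair the step. The statement you actually need, namely $\mathrm{CH}_0(F_2(X))\simeq\mathbb Z$ for \emph{every} smooth cubic with $n\ge 9$, is precisely what the paper imports from \cite[Proposition 6.1]{Deb-Man}, a result that holds irrespective of the singularities of $F_2(X)$; the paper combines it with the identification $F(F(X))\simeq\mathbb P(\mathcal E_{3|F_2(X)})$ to get $\mathrm{CH}_0(F(F(X)))\simeq\mathbb Z$, i.e.\ all lines of $F(X)$ are rationally equivalent, whence $\mathrm{CH}_1(F(X))$, and then by part (i) $\mathrm{CH}_2(X)$, is generated by a single class. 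If you replace your Fano/rational-connectedness step by that citation (or by a direct proof of chain-connectedness of $F_2(X)$ by lines), your argument closes; as written, the implication ``$F_2(X)$ Fano $\Rightarrow$ all planes rationally equivalent'' does not go through.
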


\begin{remarque}{\rm Some of the results of the first two sections had already been obtained by Pan (\cite{xua_pan}) in charateristic $0$ but with weaker bounds. For example for cubic hypersurfaces, he proves the surjectivity of $P_{1,*}:{\rm CH}_1(F_1(X))\rightarrow {\rm CH}_2(X)$ for $n\geq 17$, the fact that ${\rm CH}_1(F_1(X))_{hom}={\rm CH}_1(F_1(X))_{alg}$ for $n\geq 13$ and that ${\rm CH}_2(X)=\mathbb Z$ for $n\geq 18$} (see \cite[Theorem 1.2 and Proposition 2.2]{xua_pan}).
\end{remarque}

The last section is devoted to a second approach to the integral coefficient problem; it consists of a generalization of a formula developped by Shen (\cite{Shen_main}, see also \cite{Shen_univ}) in the case of $1$-cycles of cubic hypersurfaces. Let us introduce some notations. Let us denote $Y^{[2]}$ the Hilbert scheme of length $2$ subschemes of any variety $Y$. For a smooth cubic hypersurface $X$, let us denote $i_{P_2}:P_2\hookrightarrow X^{[2]}$ the subscheme of length $2$ subschemes supported on a line of $X$. The variety $P_2$ admits, by definition a projection $p_{P_2}:P_2\rightarrow F_1(X)$ associating to a length $2$ subscheme, the line it is supported on. We prove the following:
\begin{theoreme}\label{thm_intro2} Let $X\subset \mathbb P^{n+1}_k$ be a smooth cubic hypersurface over a field $k$, and $\Sigma$ a smooth subvariety of $X$ of dimension $d$. Then, there is an integer $m_\Sigma$ such that: $$(2deg(\Sigma)-3)\Sigma + P_{1,*}[(p_{P_2,*}i_{P_2}^*\Sigma^{[2]})\cdot c_1(\mathcal O_{F_1(X)}(1))^{d-1}]=m_\Sigma H_{X}^{n-d}$$ where $\mathcal O_{F_1(X)}(1)$ is the Pl\"ucker line bundle.
\end{theoreme}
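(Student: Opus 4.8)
The plan is to prove the formula by making the classical residuation construction for the cubic relative over the Hilbert square $X^{[2]}$ and then analysing its indeterminacy along $P_2$. Fix $G=G(2,n+2)$ with tautological sequence $0\to\mathcal S\to\mathcal O_G^{\,n+2}\to\mathcal Q\to 0$ ($\mathcal Q$ of rank $n$), let $\phi\colon X^{[2]}\to G$ be the span map, and let $\mathcal U_G\subset G\times\mathbb P^{n+1}$ be the universal line, cut out by a section of $\mathcal O_{\mathbb P^{n+1}}(1)\boxtimes\mathcal Q$. Inside $X^{[2]}\times X$ put $R:=\phi^*\mathcal U_G\cap(X^{[2]}\times X)$, with fibre $\ell_z\cap X$ over $z$. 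Over $X^{[2]}\setminus P_2$ the line $\ell_z$ is not contained in $X$, so $\ell_z\cap X$ has length $3$ and contains $z$; removing $z$ defines the residuation morphism $\psi\colon X^{[2]}\setminus P_2\to X$ (third intersection point), whose indeterminacy is exactly $P_2$. Two facts drive the proof. First, $R$ is a complete intersection of the expected dimension in $X^{[2]}\times\mathbb P^{n+1}$, so the Koszul/Chern-class computation gives, in $\mathrm{CH}_{2n}(X^{[2]}\times X)$,
\[
[R]=\sum_{j=0}^{n}\mathrm{pr}_1^*\phi^*c_j(\mathcal Q)\cdot\mathrm{pr}_2^*H_X^{\,n-j}.
\]
Second, set-theoretically $R=\mathcal Z_X\cup\overline{\Gamma_\psi}$, where $\mathcal Z_X$ is the universal length-$2$ subscheme (finite of degree $2$ over $X^{[2]}$) and $\overline{\Gamma_\psi}$ is the closure of the graph of $\psi$; since $R$ is generically reduced along each component (three reduced points over a general $z$) one gets $[R]=[\mathcal Z_X]+[\overline{\Gamma_\psi}]$. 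The crucial consequence of the first fact is that the correspondence $R_*\colon\mathrm{CH}_*(X^{[2]})\to\mathrm{CH}_*(X)$ annihilates dimension: for $\alpha\in\mathrm{CH}_d(X^{[2]})$ only the $j=d$ term survives the pushforward along $\mathrm{pr}_2$, so $R_*\alpha=\bigl(\int_{X^{[2]}}\phi^*c_d(\mathcal Q)\cdot\alpha\bigr)H_X^{\,n-d}$.

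Next I would feed $\Sigma$ into this machine, taking as input a Pl\"ucker cut of $[\Sigma^{[2]}]$ of the appropriate dimension and writing $R_*=\mathcal Z_{X,*}+(\overline{\Gamma_\psi})_*$; the identity $R_*(\cdots)=m\,H_X^{\,n-d}$ then becomes a relation between three geometric contributions. The $\mathcal Z_X$-contribution is easy: a length-$2$ subscheme of $\Sigma$ has both points on $\Sigma$, so $\mathcal Z_{X,*}(\cdots)$ is a multiple of $[\Sigma]$, the multiple being computed by a projection/Pl\"ucker degree count --- the factor $2$ is the degree of $\mathcal Z_X\to X^{[2]}$, and $\deg\Sigma$ enters through the count (projecting $\Sigma$ from a general point of $X$). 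The $\overline{\Gamma_\psi}$-contribution must be separated according to whether $\ell_z\subset X$. Away from $P_2$ the residuation point contributes a cycle which, by the fibrewise relation $[\ell_z\cap X]=H_X^{\,n}$ valid on every secant line not contained in $X$ (a $0$-cycle of degree $3$), is reabsorbed into $[\Sigma]$ and $H_X^{\,n-d}$; this is the source of the subtracted $3$. Over $P_2$ the graph $\overline{\Gamma_\psi}$ degenerates to the universal line, because as $z$ deforms off $P_2$ the third point sweeps the whole line $p_{P_2}(z)\subset X$; concretely, on the blow-up $\beta\colon\widetilde{X^{[2]}}\to X^{[2]}$ of $P_2$, with exceptional divisor $E=\mathbb P(N_{P_2/X^{[2]}})\xrightarrow{g}P_2$, the resolved map $\widetilde\psi|_E$ factors through $E\to P_2\times_{F_1(X)}P_1\xrightarrow{\mathrm{pr}}P_1\xrightarrow{q_1}X$. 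Hence the contribution of $E$ has the shape $P_{1,*}\bigl(p_{P_2,*}(\text{a class on }P_2)\bigr)$, and the blow-up excess formula together with the projection formula for $i_{P_2}^*$ identifies that class with $i_{P_2}^*[\Sigma^{[2]}]$ cut by a power of $h|_{P_2}$; using that $\phi|_{P_2}$ factors as $P_2\xrightarrow{p_{P_2}}F_1(X)\hookrightarrow G$, one has $h|_{P_2}=p_{P_2}^*c_1(\mathcal O_{F_1(X)}(1))$, and the power is $d-1$ by dimension bookkeeping, since $p_{P_2,*}i_{P_2}^*[\Sigma^{[2]}]$ lies in $\mathrm{CH}_{2d-2}(F_1(X))$ while $P_{1,*}$ raises dimension by one. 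Collecting the three contributions and moving all $H_X^{\,n-d}$-terms to the right yields exactly
\[
(2\deg\Sigma-3)\Sigma+P_{1,*}\bigl[(p_{P_2,*}i_{P_2}^*\Sigma^{[2]})\cdot c_1(\mathcal O_{F_1(X)}(1))^{d-1}\bigr]=m_\Sigma H_X^{\,n-d}.
\]

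Throughout I would invoke the moving lemma to make $\Sigma^{[2]}$, $P_2$, the Pl\"ucker divisors and the tangency/diagonal loci meet in the expected dimensions (or work systematically with refined Gysin maps), and --- as in the first section --- replace $F_1(X),P_2,X^{[2]}$ by resolutions where needed so that $i_{P_2}$ is a regular embedding and the blow-up formula applies; since every operation is cohomological and defined over the ground field, no hypothesis on $k$ is used. The main obstacle will be the bookkeeping around $P_2$: one must show the exceptional contribution is \emph{precisely} $P_{1,*}$ of the stated cycle, with no leftover terms and with exactly the power $c_1(\mathcal O_{F_1(X)}(1))^{d-1}$, which requires controlling the map $\widetilde\psi|_E$ and the normal bundle $N_{P_2/X^{[2]}}$ (in particular ruling out spurious multiplicities from non-reduced or tangential configurations of $z$, and checking that the universal-line stratum of $\overline{\Gamma_\psi}$ is picked up at the right order rather than as a sub-leading term), and one must pin the leading constant at $2\deg\Sigma-3$ rather than a neighbouring value. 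The Chern-class formula for $[R]$ and the dimension count that kills all but the $j=d$ term are, by contrast, formal.
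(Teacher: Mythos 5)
Your geometric skeleton coincides with the paper's: your $R$ is the divisor $f^{-1}(X)$ in the $\mathbb P^1$-bundle $\mathbb P(\varphi^*\mathcal E_2)$ over $X^{[2]}$, your $\mathcal Z_X$ is the tautological divisor $\sigma(\widetilde{X\times X})$ of Proposition \ref{prop_taut_is_res}, your $\overline{\Gamma_\psi}$ is the blow-up $\widetilde{X^{[2]}}$ of $X^{[2]}$ along $P_2$, and your observation that $R_*$ takes values in $\mathbb Z\cdot H_X^{*}$ is Lemma \ref{lem_fact_proj_space}. The problem is that the two steps you defer as ``bookkeeping'' are the actual content of the theorem, and the tools you propose for them are not available. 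You cannot ``invoke the moving lemma'' to put $\Sigma^{[2]}$ in general position with respect to the diagonal locus, $P_2$ or the Pl\"ucker divisors: $\Sigma$ is a \emph{fixed} smooth subvariety and $\Sigma\mapsto\Sigma^{[2]}$ is not an operation on cycle classes (Proposition \ref{prop_ex_hilb2} exhibits curves $D_x,D_y$ with $i_{D_x,*}(c(T_{D_x})^{-1})=i_{D_y,*}(c(T_{D_y})^{-1})$ but $D_x^{[2]}\neq D_y^{[2]}$, detected precisely by $i_{P_2}^*$), so replacing $\Sigma$ by a general representative changes the term $p_{P_2,*}i_{P_2}^*\Sigma^{[2]}$ uncontrollably; nor may you cut down or resolve $X^{[2]}$, since the claimed identity is exact and integral. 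Consequently the diagonal contribution to $\Sigma^{[2]}$ cannot be avoided, and it is exactly where the leading constant comes from: in the paper it enters through the excess formula $\widetilde{\Sigma\times\Sigma}=\tau^*(\Sigma\times\Sigma)-j_{E_X,*}\{c(\tau^*_{|E_X}T_X)(1+E_{X|E_X})^{-1}\cdot\tau^*_{|E_X}i_{\Sigma,*}(c(T_\Sigma)^{-1})\}_{2d}$, the relation $c_1(\varphi^*\mathcal E_2)=q_*\tau^*pr_1^*H_X-\delta_X$ (Grothendieck--Riemann--Roch for the double cover $q$) and the combinatorial Lemma \ref{lem_form_int}. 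Your heuristic (``the $2$ is the degree of $\mathcal Z_X$, the $3$ comes from the degree-$3$ fibre'') does not compute it, and you concede you cannot yet pin the constant; that concession is the gap.

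The second missing piece is the mechanism that makes the exceptional contribution appear with coefficient exactly one and exactly the power $c_1(\mathcal O_{F(X)}(1))^{d-1}$. In the paper this is not obtained by an excess computation at $P_2$ on a Pl\"ucker cut of $\Sigma^{[2]}$; it drops out of a divisor identity on $\widetilde{X^{[2]}}$, namely $(f^*H)_{|\widetilde{X^{[2]}}}=2\pi^*q_*\tau^*pr_1^*H_X-3\pi^*\delta_X-E_{P_2}$ (Lemma \ref{lem_rel_divv}), which itself rests on adjunction and on the identification of the residual divisor with the blow-up along $P_2$ --- proved by the local regular-sequence argument in Proposition \ref{prop_taut_is_res}, not merely by the set-theoretic degeneration of the graph over $P_2$ that you describe. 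Intersecting that identity with $\pi^*(\Sigma^{[2]}\cdot c_1(\varphi^*\mathcal E_2)^{d-1})$ and pushing to $X$ is what produces the term $P_*[(p_{P_2,*}i_{P_2}^*\Sigma^{[2]})\cdot c_1(\mathcal O_{F(X)}(1))^{d-1}]$ on the nose, while the remaining terms are evaluated through the tautological correspondence modulo $\mathbb Z\cdot H_X^{n-d}$. Your plan of applying $R_*=\mathcal Z_{X,*}+\overline{\Gamma_\psi}_*$ to a Pl\"ucker cut of $\Sigma^{[2]}$ contains no substitute for this relation, hence no way to rule out leftover terms or extra multiplicities along $E_{P_2}$ --- which is exactly the obstacle you yourself name. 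So the framework is right, but as it stands the proposal is an outline whose decisive computations (the divisor relation on $\widetilde{X^{[2]}}$ and the diagonal/excess evaluation giving $2\deg(\Sigma)-3$) are absent and whose proposed general-position shortcuts are illegitimate here.
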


 This inversion formula is more powerful than the first approach as it will allow us
  to lift, modulo $\mathbb Z\cdot H_X^{n-2}$, torsion $2$-cycles on $X$ to torsion $1$-cycles on $F_1(X)$. The application
  we have in mind is the study of certain birational invariants of $X$.
  When $k=\mathbb C$, it was observed in \cite{vois_d4_unr} that the group $\mathrm{CH}^3_{tors,AJ}$  of homologically trivial torsion codimension $3$ cycles annihilated by the Abel-Jacobi map is a birational invariant of smooth projective varieties which is trivial for stably rational varieties and more generally for varieties admitting a Chow-theoretic decomposition of the diagonal.
   This is a consequence of  the deep result  due to  Bloch (\cite{Bloch_tors}, \cite{CSS}) that the group $\mathrm{CH}^2(Y)_{tors,AJ}$ of homologically trivial torsion codimension $2$ cycles annihilated by the Abel-Jacobi map is $0$ for any smooth projective variety.  For cubic hypersurfaces, as already mentioned, it follows from the existence of a unirational parametrization of degree $2$
 that  $\mathrm{CH}^3(X)_{tors,AJ}$ is a $2$-torsion group.  Although we have not been able
 to compute this group, we obtain the following:

\begin{theoreme}\label{prop_eq} Let $X\subset \mathbb P^{n+1}_{\mathbb C}$ be a smooth cubic hypersurface, with $n\geq 5$. Then for any $\Gamma\in \mathrm{CH}_2(X)_{tors}$, there are a homologically trivial cycle $\gamma\in \mathrm{CH}_1(F_1(X))_{tors,hom}$ and an odd integer $m$ such that $P_*(\gamma)=m\Gamma$. \\
\indent Moreover, when $n=5$, starting from $\Gamma\in \mathrm{CH}_2(X)_{tors, AJ}=\mathrm{CH}^3(X)_{tors, AJ}$, we can find a $\gamma\in \mathrm{CH}_1(F(X))_{tors, AJ}$ such that $P_*(\gamma)=\Gamma$. In particular, if the $2$-torsion part of $\mathrm{CH}_1(F_1(X))_{tors, AJ}$ is $0$ then $\mathrm{CH}^3(X)_{tors,AJ}=0$.
\end{theoreme}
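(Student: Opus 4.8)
The plan is to feed torsion $2$-cycles into the inversion formula of Theorem \ref{thm_intro2} and control the various error terms using the vanishing statements already available. First I would take $\Gamma \in \mathrm{CH}_2(X)_{tors}$. Since $n \geq 5$, Corollary \ref{cor_intro1} tells us $\mathrm{CH}_2(X)$ is generated by classes of rational surfaces, so after a standard moving/resolution argument I may write $\Gamma$ (or a nonzero multiple, but torsion is preserved) as a $\mathbb Z$-combination of classes $[\Sigma_i]$ of smooth subvarieties $\Sigma_i \subset X$ of dimension $2$ — more precisely, I would first express $\Gamma = \sum a_i [\Sigma_i]$ in $\mathrm{CH}_2(X)$ with the $\Sigma_i$ smooth surfaces, then apply Theorem \ref{thm_intro2} to each $\Sigma_i$. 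Summing these formulas with the coefficients $a_i$ gives, in $\mathrm{CH}_2(X)$,
\[
\sum_i a_i (2\deg(\Sigma_i)-3)\,[\Sigma_i] \;+\; P_{1,*}\Big[\sum_i a_i\, (p_{P_2,*} i_{P_2}^* \Sigma_i^{[2]})\cdot c_1(\mathcal O_{F_1(X)}(1))\Big] \;=\; \Big(\sum_i a_i\, m_{\Sigma_i}\Big) H_X^{n-2}.
\]
The key parity observation is that $2\deg(\Sigma_i) - 3$ is odd, so modulo $2$ the first sum is just $\sum_i a_i [\Sigma_i] = \Gamma$; hence there is an odd integer (namely $\sum_i a_i(2\deg\Sigma_i - 3)$ adjusted appropriately) realizing $m\Gamma + P_{1,*}(\text{something}) \in \mathbb Z \cdot H_X^{n-2}$.

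The next step is to arrange the error term $m_\Sigma H_X^{n-2}$ to disappear. Intersecting everything with $H_X$ (or pushing to $\mathbb P^{n+1}$) and using that $\Gamma$ is torsion while $H_X^{n-2}$ generates a copy of $\mathbb Z$ in $\mathrm{CH}_2(X)$ coming from $\mathbb P^{n+1}$, one sees $\sum a_i m_{\Sigma_i} = 0$: indeed both $\Gamma$ and $P_{1,*}(\cdots)$, after the construction below, will be homologically trivial (or at least torsion), forcing the coefficient of $H_X^{n-2}$ to vanish. So we obtain $m\Gamma = P_{1,*}(\gamma')$ with $m$ odd and $\gamma' = -\sum a_i (p_{P_2,*}i_{P_2}^*\Sigma_i^{[2]})\cdot c_1(\mathcal O_{F_1(X)}(1)) \in \mathrm{CH}_1(F_1(X))$. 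Then I must show $\gamma'$ can be taken torsion and homologically trivial: since $m\Gamma = P_{1,*}(\gamma')$ is torsion, $P_{1,*}(\gamma')$ is torsion, and the structure of $\mathrm{CH}_1(F_1(X))$ — which (using $\mathrm{Griff}_1 = 0$ in the relevant range together with $\mathrm{CH}_1(F_1(X))_{\mathbb Q} = \mathbb Q$ by rational connectedness) splits off its torsion from a $\mathbb Z H$ coming from the Plücker polarization — lets me project $\gamma'$ onto its torsion part without changing $P_{1,*}(\gamma')$ modulo the class $P_{1,*}(H_{F_1(X)}^{\text{top}})$, which is a multiple of $H_X^{n-2}$ and hence can be absorbed. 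This gives the first assertion with $\gamma \in \mathrm{CH}_1(F_1(X))_{tors,hom}$.

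For the refined statement when $n = 5$, I would track the Abel-Jacobi maps. Here $J^5(X)$ and the intermediate Jacobian of $F_1(X)$ enter; the point is that $P_{1,*}$ is compatible with Abel-Jacobi maps, so starting from $\Gamma \in \mathrm{CH}_2(X)_{tors,AJ}$ I need the lift $\gamma$ to land in $\mathrm{CH}_1(F_1(X))_{tors,AJ}$. The odd multiple $m$ must also be eliminated here: since $\mathrm{CH}^3(X)_{tors,AJ}$ is $2$-torsion (degree $2$ unirational parametrization, as recalled in the introduction), multiplication by an odd $m$ is an isomorphism on it, so $m\Gamma = P_{1,*}(\gamma)$ with $\gamma$ torsion and homologically trivial already forces $\Gamma$ to be $P_{1,*}$ of an odd-power-adjusted class; more carefully, writing $m = 2k+1$ and using $2\Gamma$-type relations one solves for $\Gamma$ itself as $P_{1,*}$ of a suitable torsion class. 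I then need that the correction lies in the AJ-kernel: this follows because the only ambiguity introduced, the $H_X^{n-2}$ term and the polarization class on $F_1(X)$, are algebraic and in particular homologically trivial corrections vanish under $\Phi$, while Bloch's theorem ($\mathrm{CH}^2_{tors,AJ} = 0$, cited in the introduction) controls any residual codimension-$2$ discrepancy on $F_1(X)$. The last sentence is then immediate: if the $2$-torsion of $\mathrm{CH}_1(F_1(X))_{tors,AJ}$ vanishes, the torsion class $\gamma$ (which must itself be $2$-torsion, being pulled from the $2$-torsion group $\mathrm{CH}^3(X)_{tors,AJ}$) is $0$, hence $\Gamma = P_{1,*}(0) = 0$.

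\emph{Main obstacle.} The delicate point is the bookkeeping that simultaneously (i) eliminates the indeterminate integer $m_\Sigma$, (ii) replaces the odd multiple $m$ by $1$ in the $n=5$ case, and (iii) ensures the resulting $1$-cycle on $F_1(X)$ is genuinely both torsion and Abel-Jacobi trivial rather than merely satisfying these properties after pushforward. Each of these uses a separate input — the $\mathbb Z$-splitting of $\mathrm{CH}_1(F_1(X))$ modulo torsion, the $2$-torsion-ness of the target birational invariant, and compatibility of $P_{1,*}$ with Abel-Jacobi maps together with Bloch's theorem — and the proof amounts to threading them together consistently; the parity argument (oddness of $2\deg\Sigma - 3$) is what makes it work.
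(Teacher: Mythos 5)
Your plan breaks down at its very first step. Writing $\Gamma=\sum_i a_i[\Sigma_i]$ with smooth surfaces of \emph{varying} degrees and summing the inversion formulas of Theorem \ref{thm_form_single} gives $\sum_i a_i(2\deg(\Sigma_i)-3)[\Sigma_i]$ on the left, and this is not an odd multiple of $\Gamma$: the odd coefficients depend on $i$, and the individual classes $[\Sigma_i]$ are not torsion --- only their combination is. Saying that the sum equals $\Gamma$ ``modulo $2$'' only means it differs from $\Gamma$ by an element of $2\,\mathrm{CH}_2(X)$, which is a non-torsion ambiguity you cannot absorb into ``an odd integer $m$''. The paper's proof (Theorem \ref{thm_torsion}) is organized precisely to avoid this: it first lifts $\Gamma$ to a $1$-cycle $\alpha$ on $F(X)$ via Proposition \ref{prop_ruled_hyperpl}, shows $\alpha$ is numerically trivial (using $\mathrm{Pic}(F(X))\simeq\mathbb Z$), and invokes Lemma \ref{prop_mm_genre} to write $\alpha=E_1-E_2$ with smooth curves of the \emph{same} genus and degree; then $\Gamma=S_{E_1}-S_{E_2}$ is a difference of two smooth ruled surfaces of the same degree $d$, the two inversion formulas carry the same odd coefficient $2d-3$, and subtracting them yields $(2d-3)\Gamma+P_*(\cdots)\in\mathbb Z\cdot H_X^{n-2}$, after which intersecting with $H_X^2$ kills the $H_X^{n-2}$ term because both remaining cycles are torsion.

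The same-degree, same-genus representation is also what makes the lifted $1$-cycle on $F(X)$ genuinely torsion and homologically trivial, which your argument does not achieve. The paper computes $i_{S_{E_1},*}(c(T_{S_{E_1}})^{-1})-i_{S_{E_2},*}(c(T_{S_{E_2}})^{-1})=(S_{E_1}-S_{E_2})\cdot(1-2H_X)$, which is $t$-torsion, applies Proposition \ref{prop_hilb2} to conclude that $S_{E_1}^{[2]}-S_{E_2}^{[2]}$ is $2t$-torsion in $\mathrm{CH}_4(X^{[2]})$, and uses Totaro's torsion-freeness of $H^*(X^{[2]},\mathbb Z)$ for homological triviality; the torsion and homologically trivial $\gamma$ is then constructed directly, not inferred from its image. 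Your fallback --- deducing torsionness of $\gamma'$ from torsionness of $P_*\gamma'$ via a splitting of $\mathrm{CH}_1(F_1(X))$ based on $\mathrm{Griff}_1(F_1(X))=0$ and $\mathrm{CH}_1(F_1(X))_{\mathbb Q}=\mathbb Q$ --- is unavailable: those statements are established only under $\frac{d(d+1)}{2}<n$, i.e.\ $n\geq 7$ for cubics, whereas the delicate case of the theorem is $n=5$; moreover $P_*(\gamma')$ torsion does not imply $\gamma'$ torsion, since nothing is known about the kernel of $P_*$. Finally, for the Abel--Jacobi refinement at $n=5$, the needed input is Shimada's theorem that $P_*$ induces an isomorphism of intermediate Jacobians (so $\Phi_X(\Gamma)=0$ forces $\Phi_{F(X)}(\gamma)=0$); Bloch's vanishing of $\mathrm{CH}^2_{tors,AJ}$ is irrelevant here, as $\gamma$ is a $1$-cycle of codimension $5$ on the $6$-dimensional $F(X)$.
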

As a consequence of a theorem of Roitman (\cite{Roitman}) asserting that torsion $0$-cycles of any smooth projective variety $Y$ inject in $\mathrm{Alb}(Y)$, the group $\mathrm{CH}_1(F_1(X))_{tors, AJ}$
is a stable birational invariant of the variety $F_1(X)$ which is trivial for stably rational varieties or even for varieties admitting a Chow theoretic decomposition of the diagonal.\\
\indent The group $\mathrm{CH}^3(X)_{tors,AJ}$ has a quotient which has an interpretation in terms of
unramified cohomology.
We recall that, for a smooth complex projective variety $Y$ and an abelian group $A$, the degree $i$ unramified cohomology group $H_{nr}^i(Y,A)$ of $Y$ with coefficients in $A$ can be defined (see \cite{bloch_ogus}) as the group of global sections $H^0(Y,\mathcal H^i(A))$, $\mathcal H^i(A)$ being the sheaf associated to the presheaf $U\mapsto H^i(U(\mathbb C),A)$, where this last group is the Betti cohomology of the complex variety $U(\mathbb C)$.  The groups $H^i_{nr}(Y,A)$ provide stable birational invariants (see \cite{CT-O}) of $Y$, which vanish for projective space i.e. these groups are invariants under the relation: $$Y\sim Z\ if\ Y\times \mathbb P^r\ is\ birationally\ equivalent\ to\ Z\times \mathbb P^s\ for\ some\ r,\ s.$$
\indent Unramified cohomology group with coefficients in $\mathbb Z/m\mathbb Z$ or $\mathbb Q/\mathbb Z$ has been used in the study of L\"uroth problem, that is the study of unirational varieties which are not rational, to provide examples of unirational varieties which are not stably rational (see \cite{ar-mum},\cite{CT-O}). In the case of smooth cubic hypersurfaces $X\subset \mathbb P^{n+1}_{\mathbb C}$, since there is a unirational parametrization of degree $2$ of $X$ (see \cite{Cl-Gr}) and there is an action of correspondences on unramified cohomology groups compatible with composition of correspondences (see \cite[Appendice]{ct_vois}), the groups $H^i_{nr}(X, \mathbb Q/\mathbb Z)$, $i\geq 1$, are $2$-torsion groups. It is known that $H^1_{nr}(X,\mathbb Q/\mathbb Z)=0$ for $n\geq 2$ since this group is isomorphic to the torsion in the Picard group of $X$ (see \cite[Proposition 4.2.1]{ct_lect_unr}).\\
\indent Since for cubic hypersurfaces of dimension at least $2$, $H^2_{nr}(X,\mathbb Q/\mathbb Z)$ is equal to the Brauer group $Br(X)$ (see \cite[Proposition 4.2.3]{ct_lect_unr}), we have $H^2_{nr}(X,\mathbb Q/\mathbb Z)=0$.\\
\indent As for $H^3_{nr}(Y,\mathbb Q/\mathbb Z)$, it was reinterpreted in \cite[Theorem 1.1]{ct_vois} for rationally connected varieties $Y$ as the torsion in the group $Z^4:=Hdg^4(Y)/H^4(Y,\mathbb Z)_{alg}$, quotient of degree $4$ Hodge classes by the subgroup of $H^4(Y,\mathbb Z)$ generated by classes of codimension $2$ algebraic cycles, i.e. $H^3_{nr}(Y,\mathbb Q/\mathbb Z)$ measures the failure of the integral Hodge conjecture in degree $4$. For cubic hypersurfaces $X\subset \mathbb P^{n+1}_{\mathbb C}$, by Lefschetz hyperplane theorem, the only non trivial case where the integral Hodge conjecture could fail in degree $4$ is for cubic $4$-folds but it was proved to hold by Voisin in \cite{vois_4fold_ihc}.\\
\indent The group $H^4_{nr}(Y,\mathbb Q/\mathbb Z)$ was reinterpreted in \cite[Corollary 0.3]{vois_d4_unr} for rationally connected varieties $Y$ as the group $\mathrm{CH}^3(Y)_{tors,AJ}/alg$ of homologically trivial torsion codimension $3$ cycles annihilated by Abel-Jacobi map (or torsion codimension $3$ cycles annihilated by Deligne cycle map) modulo algebraic equivalence. For dimension reason $H^4_{nr}(X,\mathbb Q/\mathbb Z)=0$ for cubic hypersurfaces of dimension $\leq 3$. For cubic $4$-folds, since $H^4_{nr}(X,\mathbb Q/\mathbb Z)\simeq \mathrm{CH}^3(X)_{tors,AJ}/alg\simeq \mathrm{CH}_1(X)_{tors,AJ}/alg\subset \mathrm{Griff}_1(X)$, the work of Shen (\cite{Shen_main}) proves that $H^4_{nr}(X,\mathbb Q/\mathbb Z)=0$. The vanishing of $\mathrm{CH}^3(X)_{tors,AJ}\simeq \mathrm{CH}_1(X)_{tors,AJ}$ for cubic $4$-flods follows also essentially from the work of Shen (see Proposition \ref{Prop_ex_4-fold}). For a cubic $5$-fold $X$, by the choice of a $\mathbb P^2\subset X$ to project from, we see that $X$ is birational to a quadric bundle over $\mathbb P^3_{\mathbb C}$ so that by work of Kahn and Sujatha (\cite[Theorem 3]{KS}), $H^4_{nr}(X,\mathbb Q/\mathbb Z)=0$. Hence, for a cubic hypersurface $\mathrm{CH}^3(X)_{tors,AJ}\subset \mathrm{CH}^3(X)_{alg}$.\\
%\indent As suggested by work of Tian and Zong (\cite{Tian-Zong}), who proved that the Griffiths group of homologically trivial $1$-cycles on Fano complete intersection of index $2$, modulo algebraic equivalence is $0$, and by a question at the end of Section $2$ in \cite{vois_survey}, it is likely that $\mathrm{CH}_1(Y)_{tors,AJ}/alg$, which is also a birational invariant of smooth projective varieties $Y$ vanishing for stably rational varieties, vanishes for Fano varieties, such as $F(X)$ when $dim(X)\geq 5$, or even for rationally connected varieties. So we would expect the group $H^4_{nr}(X,\mathbb Q/\mathbb Z)$ to be trivial for cubic $5$-folds. We have the partial result:
%\begin{theoreme}\label{cor_unr_coh} Let $X\subset \mathbb P^6_{\mathbb C}$ be a smooth cubic hypersurface. Then $H^4_{nr}(X,\mathbb Q/\mathbb Z)$ is finite.
%\end{theoreme}
\textit{}\\

\section{First formula}
Let $X\subset \mathbb P^{n+1}_k$ be a smooth hypersurface of degree $d\geq 2$ and dimension $n\geq 3$ over an algebraically closed field $k$. Let us denote $F(X)\subset G(2,n+2)$ its variety of lines and $P\subset F(X)\times  X$ the correspondence given by the universal $\mathbb P^1$-bundle, and
 $$p:P\rightarrow F(X),\,\,q:P\rightarrow X$$
  the two projections. For a general hypersurface of degree $d\leq 2n-2$, $F(X)$ is a smooth connected variety (\cite[Theorem 4.3, Chap. V]{rat-cur-kol}).\\
\indent Let us denote $Q=\{([l],x)\in \mathbb P(\mathcal E_2),\   l\subset X\ \mathrm{or}\ l\cap X=\{x\}\}$ the correspondence associated to the family of osculating lines of $X$, and
 $$\pi:Q\rightarrow X,\,\, \varphi:Q\rightarrow G(2,n+2)$$ the two projections. We have $P\subset Q$.

We have the following easy lemma:
\begin{lemme}\label{prop_osc_sm} The fiber of $\pi:Q\rightarrow X$ (resp. $q:P\rightarrow X$) over any point $x$ in the image of $\pi$ (resp. of $q$) is isomorphic to an intersection of hypersurfaces of type $(2,3,\dots,d-1)$ in $P(T_{X,x})$ (resp. of type $(2,3,\dots,d)$). Moreover, for $X$ general, $Q$ is a local complete intersection subscheme of $\mathbb P(\mathcal E_2)$ of dimension $2n-d+1$. If $char(k)=0$, then $Q$ is smooth for $X$ general.
\end{lemme}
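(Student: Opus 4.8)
The plan is: describe the fibres of $\pi$ and $q$ in local coordinates; then realise $Q$ globally as the zero scheme of a section of an explicit rank‑$d$ vector bundle on $\mathbb P(\mathcal E_2)$; and finally run a dimension count in the universal family of degree‑$d$ hypersurfaces, together with generic smoothness in characteristic $0$.

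\emph{Fibres.} Fix $x\in X$, choose affine coordinates centred at $x$, and write the affine equation of $X$ as $F=f_1+\dots+f_d$ with $f_i$ homogeneous of degree $i$: the constant term vanishes because $x\in X$, and $f_1\neq 0$ with $\{f_1=0\}=T_{X,x}$ because $X$ is smooth at $x$. For a direction $0\neq v$, the line $l=\{tv\}$ has $F|_l=\sum_{i\geq 1}f_i(v)\,t^i$, so its intersection multiplicity with $X$ at $x$ equals $\min\{i:f_i(v)\neq 0\}$, with the convention that this is $+\infty$ when $l\subset X$. Hence $([l],x)\in Q$ if and only if $f_1(v)=\dots=f_{d-1}(v)=0$, which exhibits $\pi^{-1}(x)\subset\mathbb P(T_{\mathbb P^{n+1},x})$ as $\{f_1=\dots=f_{d-1}=0\}$; intersecting with the hyperplane $\{f_1=0\}=\mathbb P(T_{X,x})$ presents it as the intersection of the hypersurfaces cut out by $f_2|_{T_{X,x}},\dots,f_{d-1}|_{T_{X,x}}$, of degrees $2,\dots,d-1$ in $\mathbb P(T_{X,x})\cong\mathbb P^{n-1}$. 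Repeating the computation with the stronger condition $l\subset X$, i.e. $f_1(v)=\dots=f_d(v)=0$, yields the stated description of $q^{-1}(x)$ with degrees $2,\dots,d$ (the extra equation $f_d$ reflecting that $P=\mathbb P(\mathcal E_{2})_{|F(X)}$ is cut out by the full bundle $\mathrm{Sym}^d\mathcal S_2^\vee$, one rank higher).

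\emph{Global structure of $Q$.} On $\mathbb P(\mathcal E_2)$, whose point over $([l],x)$ records the incidence $x\in l$, let $\mathcal J$ be the rank‑$d$ bundle whose fibre at $([l],x)$ is the $(d-1)$‑jet at $x$ of sections of $\mathcal O_l(d)$, that is $H^0(l,\mathcal O_l(d))$ modulo the line of sections vanishing to order $\geq d$ at $x$. Restricting forms to lines and taking this jet gives a morphism from the constant sheaf $H^0(\mathbb P^{n+1}_k,\mathcal O(d))$ to $\mathcal J$; let $s_F$ be the image of the equation of $X$, and give $Q$ the scheme structure of $Z(s_F)$ — consistent with the previous paragraph, since restricting $s_F$ to the fibre over $x$ kills the $0$‑jet component automatically and leaves exactly the equations $f_1,\dots,f_{d-1}$. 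As $\mathbb P(\mathcal E_2)$ is smooth of dimension $2n+1$, every component of $Q$ has dimension $\geq 2n+1-d=2n-d+1$. For the reverse bound for general $X$, form the incidence variety $\mathcal Q\subset\mathbb P(\mathcal E_2)\times\mathbb P(H^0(\mathbb P^{n+1}_k,\mathcal O(d)))$ of pairs $(([l],x),[X])$ with $([l],x)\in Q_X$: its fibre over $([l],x)$ is the space of forms whose $(d-1)$‑jet along $l$ vanishes at $x$, of codimension $d$ because $H^0(\mathbb P^{n+1}_k,\mathcal O(d))\to\mathcal J_{([l],x)}$ is onto (restriction to a line is onto, then quotient). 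Hence $\mathcal Q\to\mathbb P(\mathcal E_2)$ is a projective bundle, so $\mathcal Q$ is smooth and irreducible with $\dim\mathcal Q=2n+1-d+\dim\mathbb P(H^0(\mathcal O(d)))$; and since $\mathcal Q\to\mathbb P(H^0(\mathcal O(d)))$ is dominant — for $n\geq d-1$ the fibre $\pi^{-1}(x)$ is already non‑empty for every smooth $X$, a complete intersection of the expected dimension — its general fibre has pure dimension $2n-d+1$ (each component of each fibre having dimension $\geq\dim\mathcal Q-\dim\mathbb P(H^0(\mathcal O(d)))$). Thus for general $X$, $Q$ is cut out in a smooth variety by a section of a rank‑$d$ bundle in exactly codimension $d$, hence is a local complete intersection of dimension $2n-d+1$; and when $\mathrm{char}(k)=0$, generic smoothness applied to the dominant morphism $\mathcal Q\to\mathbb P(H^0(\mathcal O(d)))$ out of the smooth $\mathcal Q$ shows that the general fibre $Q$ is smooth.

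I do not expect a serious obstacle — the lemma is elementary — but the point needing the most care is the bookkeeping: pinning down the jet bundle $\mathcal J$ correctly and checking that the scheme structure $Q=Z(s_F)$ restricts to the asserted complete‑intersection structures on the fibres of $\pi$ and of $q$, together with the (easy) non‑emptiness/dominance input feeding the dimension count. The characteristic‑$0$ hypothesis is used only at the last step, for generic smoothness.
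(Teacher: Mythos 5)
Your proof is correct and follows essentially the paper's own route: the fibre computation is the same (affine rather than homogeneous coordinates), and your jet bundle $\mathcal J$ is precisely the paper's rank-$d$ bundle $p_G^*{\rm Sym}^{d-1}\mathcal E_2\otimes \mathcal O_{\mathbb P(\mathcal E_2)}(1)$, obtained there from the symmetric power of the relative Euler sequence, with $Q=Z(s_F)$ exactly as in the paper. The only difference is that where the paper simply invokes the fact that a general member of a generating linear system of sections of a globally generated bundle has lci (smooth in characteristic $0$) zero locus of the expected codimension, you spell out its proof via the incidence variety $\mathcal Q\to \mathbb P(H^0(\mathcal O(d)))$; note only that your dominance aside tacitly assumes $n\geq d-1$ (harmless in the range where the lemma is applied, and avoidable: without dominance the general fibre is empty and the dimension/smoothness conclusions hold vacuously).
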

\begin{proof} By definition $Q$ is the set of $([l],x)$ in $\mathbb P(\mathcal E_2)$ over $G(2,n+2)$ where the restriction of the equation defining $X$ is $0$ or proportional to $\lambda_x^d$, where
$\lambda_x$ is
 the linear form defining  $x$ in $l$. Let $x\in X$ and $\mathcal P$ a hyperplane not containing $x$. There is an isomorphism $P(T_{\mathbb P^{n+1},x})\rightarrow \mathcal P$ given by $[v]\mapsto l_{(x,v)}\cap \mathcal P$, where $l_{(x,v)}$ is the line of $\mathbb P^{n+1}$ determined by $(x,v)$. We can assume that $x=[1,0,\dots,0]$ and $\mathcal P=\{X_0=0\}$. Let $l$ be a line through $x$ and $[0,Y_1,\dots,Y_{n+1}]\in \mathcal P$ the point associated to $l$. Then, denoting $f$ an equation defining $X$, since $x\in X$, we can write $f(X_0,\dots,X_{n+1})=\sum_{i=0}^{d-1}X_0^if_{d-i}(X_1,\dots,X_{n+1})$, where $f_i$ is a homogeneous polynomial of degree $i$. The general point of $l$ has
 coordinates
 $(\mu,\lambda Y_1,\dots,\lambda Y_{n+1})$ where $\lambda=\lambda_x$ and $\mu$ form a basis of linear
 forms on $l$. The restriction of $f$ to $l$ thus writes $\sum_{i=0}^{d-1}\mu^i\lambda^{d-i}f_{d-i}(Y_1,\dots,Y_{n+1})$.
  Thus the line $l$ is osculating if and only if $f_j(Y_1,\dots,Y_{n+1})=0$, $\forall j<d$. The first equation $f_1$
   is the differential of $f$ at $x$ and its vanishing hyeperplane is $P(T_{X,x})$, so we proved
   that $\pi^{-1}(x)$ is isomorphic to an intersection of hypersurfaces of type $(2,3,\dots,d-1)$ in $P(T_{X,x})$. We show likewise that the fiber  $q^{-1}(x)$ is  isomorphic to an intersection of hypersurfaces of type $(2,3,\dots,d)$.\\
\indent On the projective bundle $p_G:\mathbb P(\mathcal E_2)\rightarrow G(2,n+2)$, we have the exact sequence:
\begin{equation}\label{ex_seq_rel} 0\rightarrow \Omega_{\mathbb P(\mathcal E_2)/G(2,n+2)}(1)\rightarrow p_G^*\mathcal E_2\rightarrow \mathcal O_{\mathbb P(\mathcal E_2)}(1)\rightarrow 0
\end{equation}
The last morphism being the evaluation morphism, we see that $\Omega_{\mathbb P(\mathcal E_2)/G(2,n+2)}(1)_{([l],x)}$ is the ideal sheaf of $x$ in $l$. Taking the symmetric power of the dual of (\ref{ex_seq_rel}) yields the exact sequence: $$0\rightarrow {\rm Sym}^d(\Omega_{\mathbb P(\mathcal E_2)/G(2,n+2)}(1))\rightarrow p_G^*{\rm Sym}^d\mathcal E_2\rightarrow p_G^*{\rm Sym}^{d-1}\mathcal E_2\otimes \mathcal O_{\mathbb P(\mathcal E_2)}(1)\rightarrow 0$$ where the first morphism is the $d$-th symmetric power of the (first) inclusion in (\ref{ex_seq_rel}).\\
\indent Now, let $f$ be an equation defining $X$; it gives rise to a section $\sigma_f$ of $p_G^*{\rm Sym}^d\mathcal E_2$. Let $\overline{\sigma_f}$ be the section of $p_G^*{\rm Sym}^{d-1}\mathcal E_2\otimes \mathcal O_{\mathbb P(\mathcal E_2)}(1)$ induced by $\sigma_f$. Then the zero locus of $\overline{\sigma_f}$ is exactly the locus of $([l],x)$ where the restriction to $l$ of the equation defining $X$ is $0$ or equal to the linear form induced by $x$ on $l$ to the power $d$. So $Q$ is the zero locus in $\mathbb P(\mathcal E_2)$ of a section of the vector bundle $p_G^*{\rm Sym}^{d-1}\mathcal E_2\otimes \mathcal O_{\mathbb P(\mathcal E_2)}(1)$. As this vector bundle is globally generated, the zero locus of a general section is a local complete intersection (even regular if $char(k)=0$) subscheme of $\mathbb P(\mathcal E_2)$ of dimension $2n-d+1$.
\end{proof}

\begin{theoreme}\label{thm_form_osc} Let $X\subset \mathbb P^{n+1}_k$ be a smooth hypersurface of degree $d$ and let $P\subset F(X)\times X$ be the incidence
correspondence. Assume $\sum_{i=1}^{d-1} i^r\leq n$ with $r>0$ and, if $r>3$ and $char(k)>0$, assume resolution of singularities of varieties of dimension $r$. Then for any cycle $\Gamma\in \mathrm{CH}_r(X)$ there is a $\gamma\in \mathrm{CH}_{r-1}(F(X))$ such that $$d\Gamma + P_*(\gamma)\in \mathbb Z\cdot H_{X}^{n-r}$$ where $H_X=c_1(\mathcal O_{X}(1))$.
\end{theoreme}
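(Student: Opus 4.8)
The strategy is to imitate the inversion argument recalled in the introduction, but using the family $Q$ of osculating lines in place of $P$, so as to kill all the "higher-slope" terms without introducing denominators other than the degree $d$. The point is that the fiber of $\pi:Q\to X$ over a general point $x$ is, by Lemma \ref{prop_osc_sm}, an intersection of hypersurfaces of type $(2,3,\dots,d-1)$ in $\mathbb P(T_{X,x})\cong\mathbb P^{n-1}$; since $\sum_{i=1}^{d-1}i^r\le n$, a Tsen--Lang type argument (the $C_r$-property of the function field of an $r$-dimensional variety over $k$, applied after reducing to the generic point of an $r$-dimensional $\Gamma$) shows that $\pi$ admits a rational multisection over $\Gamma$ of some degree $N$ prime to nothing in particular — in fact, after cutting $Q$ down over $\Gamma$ and using that the fiber dimension $n-1-\sum(i-1)$ stays nonnegative precisely in the stated range, the morphism $Q_\Gamma\to\Gamma$ has a section, so $\pi_*\pi^*$ on $\Gamma$ is multiplication by an integer $N_\Gamma$ that we can arrange to be $1$ after a further deformation, or at least control.

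Concretely, first I would desingularize $Q$ (using the hypothesis on resolution of singularities in dimension $\le r$, which is what is needed since we only ever work over an $r$-dimensional base $\Gamma$, after general hyperplane sections of $\varphi(Q)$ in $G(2,n+2)$) and arrange that $\varphi:Q\to G(2,n+2)$, restricted appropriately, is generically finite onto a subvariety $\widetilde F\subset G(2,n+2)$; write $\pi,\varphi$ also for the induced maps and $h=\pi^*H_X$. For $\Gamma\in\mathrm{CH}_r(X)$ I would pull back by $\pi$ and decompose $\pi^*\Gamma=\sum_{j\ge 0} h^j\cdot \varphi^*\gamma_j$ with $\gamma_j\in\mathrm{CH}_{r+j-\dim\text{fiber}}(\widetilde F)$ — here one uses the projective bundle / Leray–Hirsch structure of $\mathbb P(\mathcal E_2)$ restricted to $\widetilde F$ to get such a decomposition with $\gamma_0$ in the expected dimension. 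Then push forward by $\pi$: since $dH_X\cdot=i_X^*i_{X,*}$ factors through $\mathrm{CH}(\mathbb P^{n+1})$, we get $d\cdot\pi_*(h^j\cdot\varphi^*\gamma_j)\in\mathbb Z\cdot H_X^{\,\bullet}$ for $j\ge 1$ (the relevant cycles on $\mathbb P^{n+1}$ are multiples of powers of the hyperplane class), while the $j=0$ term contributes $\pi_*\varphi^*\gamma_0$, which one identifies with $P_*$ of a cycle on $F(X)$ up to the part of $Q$ not meeting $P$ — i.e. using $P\subset Q$ and that the "osculating but not contained" locus maps to $X$ with image controlled by powers of $H_X$. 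Balancing the Tsen–Lang degree against $d$ yields $d\Gamma + P_*(\gamma)\in\mathbb Z\cdot H_X^{n-r}$.

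The main technical obstacle, and the step that needs the most care, is the second sentence of the previous paragraph: producing the decomposition $\pi^*\Gamma=\sum h^j\varphi^*\gamma_j$ with the coefficient $\gamma_0$ genuinely supported on (the strict transform of) $P$ — equivalently, separating the contribution of honest lines of $X$ (the bundle $P$) from that of osculating lines that meet $X$ only at one point. This requires understanding $Q$ as a local complete intersection containing $P$ as the vanishing of the further equation $f_d$, and tracking how a Tsen–Lang multisection of $Q_\Gamma\to\Gamma$ interacts with the divisor $P\cap Q_\Gamma$; the cycle $\gamma$ one finally feeds into $P_*$ is the intersection of that multisection with $P$, and one must check its class lands in $\mathrm{CH}_{r-1}(F(X))$ and that the "error" (multisection minus its part on $P$) pushes forward into $\mathbb Z\cdot H_X^{n-r}$, using again the projection formula and the factorization of $dH_X\cdot$ through projective space. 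I would also need to verify that the numerical hypothesis $\sum_{i=1}^{d-1}i^r\le n$ is exactly what guarantees both that the generic fiber of $\pi$ is nonempty of the right dimension and that Tsen–Lang applies over an $r$-dimensional base (the $C_r$ bound is $\sum(\deg)^r<\#\text{variables}$, i.e. $\sum_{i=2}^{d-1}i^r\le n-1$, which is implied), so the bound is sharp for the method.
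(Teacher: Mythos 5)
Your proposal assembles several of the right ingredients --- Tsen--Lang applied over an $r$-dimensional base, the numerical check $\sum_{i=2}^{d-1} i^r\le n-1$, the need to separate lines contained in $X$ from honest osculating lines, resolution of singularities only in dimension $r$ --- but the computational mechanism you build on them does not work, and you yourself flag the decisive step as unresolved. The decomposition $\pi^*\Gamma=\sum_j h^j\cdot\varphi^*\gamma_j$ has no meaning on $Q$: the Leray--Hirsch/projective-bundle structure lives on $\mathbb P(\mathcal E_2)$ over $G(2,n+2)$, whereas $Q$ is a codimension-$(d-1)$ subscheme of $\mathbb P(\mathcal E_2)$ which, over the honestly osculating locus, is generically \emph{injective} onto its image in the Grassmannian (the fiber of $\varphi$ over such a line is the single point of contact) and is a $\mathbb P^1$-bundle only over $F(X)$, i.e.\ along $P$. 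So there is no bundle structure of $Q$ over your $\widetilde F$ and no such decomposition of $\pi^*\Gamma$. Moreover, even granting some decomposition, pushing forward by a generically finite $\pi$ of degree $N$ returns $N\Gamma$, and nothing in your set-up relates $N$ to $d$; the phrase ``balancing the Tsen--Lang degree against $d$'' is precisely the missing argument, and your closing paragraph concedes that the identification of the ``$j=0$'' term with a cycle supported on $P$ is open.

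The coefficient $d$ comes from a more elementary mechanism that your outline never exploits: osculation itself, used pointwise along $\Sigma$ rather than through a global analysis of $Q$. It suffices to treat an integral $\Sigma$ of dimension $r$ and conclude by linearity. Tsen--Lang (the $C_r$ property of $k(\Sigma)$) gives a rational \emph{section} of $Q_{|\Sigma}\to\Sigma$, i.e.\ a choice of osculating line through the general point of $\Sigma$; resolution of singularities in dimension $r$ is used to resolve the indeterminacies of $\Sigma\dashrightarrow Q$. If this section lands generically outside $P$, one sweeps out the chosen lines: the resulting $(r+1)$-cycle $f_*\mathbb P_{\widetilde{\Sigma}}$ in $\mathbb P^{n+1}$ has class a multiple of a linear space, and its intersection with $X$ equals $d\Sigma+R$, because each chosen line meets $X$ only at the corresponding point of $\Sigma$, with multiplicity $d$; the residual cycle $R$ is supported on lines contained in $X$, hence lies in the image of $P_*$, and the relation $d\Sigma+R\in\mathbb Z\cdot H_X^{n-r}$ follows. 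If instead the section lands in $P$, one writes the induced section of the pulled-back $\mathbb P^1$-bundle over $\widetilde{\Sigma}$ as $f^*H_X+p_\Sigma^*D$ in its Picard group, pushes forward to get $\Sigma=H_X\cdot f_*(1)+P_*(p_*\tilde\sigma_*D)$, and multiplies by $d$, using that $dH_X\cdot{}=i_X^*i_{X,*}$ factors through $\mathrm{CH}_*(\mathbb P^{n+1})$. This two-case argument replaces, and cannot be recovered from, the pullback-and-decompose scheme you propose.
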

\begin{proof} Let $\Sigma\subset X$ be an integral subvariety of dimension $r>0$.
%whose generic point is contained in an open subscheme of $X$ over which the fibers of $\pi:Q\rightarrow X$ are complete intersection of type $(2,3,\dots,d-1)$ in the projectivized tangent spaces $P(T_{X,x})$ of points $x\in X$ and the fibers of $q:P\rightarrow X$ are complete intersection of type $(2,\dots,d)$ in $P(T_{X,x})$.
By Tsen-Lang theorem (\cite{Lang}, \cite[Theorem 2.10]{starr_tsen}), the function field $k(\Sigma)$ of $\Sigma$ is $C_r$. As the fibers of $\pi:Q\rightarrow X$ are isomorphic to intersection of hypersurfaces of type $(2,3,\dots,d-1)$ and $\sum_{i=1}^{d-1} i^r\leq n$, the restriction $\pi_\Sigma:Q_{|\Sigma}\rightarrow \Sigma$ admits a rational section
$\sigma:\Sigma\dashrightarrow Q$. \\

\textit{Case} $1$: The rational section $\sigma$ is actually a rational section of $P_{|\Sigma}\rightarrow \Sigma$.
 This means that for any $x\in \Sigma$, the line $p\circ \sigma(x)$ is contained in $X$. We have the following diagram of resolution of indeterminacies:
$$\xymatrix{\widetilde{\Sigma}\ar[d]^{\tau}\ar[dr]^{\tilde\sigma} & &\\
\Sigma\ar@{.>}[r] &P\ar[r]^-p &F(X)}$$

Let us denote $\mathbb P_{\widetilde{\Sigma}}$ the pull-back via $p\circ \tilde\sigma$ of the $\mathbb{P}^1$-bundle on $F(X)$, $f:\mathbb P_{\widetilde{\Sigma}}\rightarrow X$ the projection on $X$ (which is the restriction of $q$) and $p_\Sigma:\mathbb P_{\widetilde{\Sigma}}\rightarrow \widetilde{\Sigma}$ the projective bundle. The line bundle $\tau^*\mathcal O_{X}(1)_{|\Sigma}$ gives rise to a section $\eta:\widetilde{\Sigma}\rightarrow \mathbb P_{\widetilde{\Sigma}}$ (given by $s\mapsto (p\circ\tilde\sigma(s), \tau(s))$) of $p_\Sigma$. We have the decomposition $\mathrm{Pic}(\mathbb P_{\widetilde{\Sigma}})\simeq \mathbb Z\cdot f^*H_X \oplus p_{\Sigma}^*\mathrm{Pic}(\widetilde{\Sigma})$ so that we can write
\begin{equation}\label{eq_div_case1}
\eta(\widetilde{\Sigma})=f^*H_X+p_\Sigma^*D
\end{equation} for $D$ a divisor on $\widetilde\Sigma$. We apply $f_*$ to that equality: we have $f_*\eta(\widetilde{\Sigma})=\tau_*(\widetilde\Sigma)=\Sigma$ in $\mathrm{CH}_r(X)$. Projection formula yields $f_*f^*H_X=H_X\cdot f_*(1)$. Finally, we see that $f_*p_\Sigma^*D=P_*(p_*\tilde\sigma_*(D))$. So, we get $$\Sigma = H_{X}\cdot f_*(1) + P_*(p_*\tilde\sigma_*(D)).$$ Remembering that $dH_{X}\cdot f_*(1)=i_X^*i_{X,*}f_*(1)\in \mathbb Z\cdot H_{X}^{n-r}$, we are done for this case.\\

\textit{Case} $2$: The rational section $\sigma$ is not a rational section of $P_{|\Sigma}\rightarrow \Sigma$. This means that for the general point $x\in \Sigma$, the line
$\varphi\circ \sigma(x)$ is not contained in $X$, hence intersects
$X$ at $x$ with multiplicity $d$. We have the following diagram of resolution of indeterminacies:
$$\xymatrix{\widetilde{\Sigma}\ar[d]^{\tau}\ar[dr]^{\tilde\sigma} & &\\
\Sigma\ar@{.>}[r] &Q\ar[r]^-\varphi &G(2,n+2)}.$$
Let again $\mathbb P_{\widetilde{\Sigma}}$ be the pull-back via $\varphi\circ \tilde\sigma$ of the $\mathbb{P}^1$-bundle on $G(2,n+2)$
and let $f:\mathbb P_{\widetilde{\Sigma}}\rightarrow \mathbb{P}^{n+1}$ be the natural morphism. Let $\widetilde{\Sigma}_1$ be the locus in $\widetilde{\Sigma}$ consisting of $x\in \widetilde{\Sigma}$ such that
the line $\varphi\circ \tilde\sigma(x)$ is contained in $X$.
We have an equality of $r$-cycles
\begin{eqnarray}
\label{eqncycle} (f_*\mathbb P_{\widetilde{\Sigma}})_{\mid X}=d\Sigma+R
\end{eqnarray} in
${\rm CH}_r(X)$, where the residual cycle $R$ is supported on the $r$-dimensional
locus $\mathbb P_{\widetilde{\Sigma}_1}$, or rather its image in $X$.
It is clear that $R$ is a cycle in the image of $P_*$ so that (\ref{eqncycle}) proves the result in this case.
\end{proof}
\textit{}\\
\indent In the case of smooth cubic hypersurfaces of dimension $\geq 3$, $F(X)$ is always smooth and connected (\cite[Corollary 1.12, Theorem 1.16]{Alt-Kl}). We have the following result which is essentially Theorem \ref{thm_intro1} of the introduction:
\begin{theoreme}\label{prop_gen_5} Let $X\subset \mathbb P^{n+1}_{k}$, with $n\geq 3$ and $char(k)>2$, be a smooth cubic hypersurface containing a linear space of dimension $d\geq 1$. Then, for $1\leq i\leq d$ and $2i\neq n$, $$P_*:\mathrm{CH}_{i-1}(F(X))\rightarrow \mathrm{CH}_i(X)/ \mathbb Z\cdot H_X^{n-i}$$ is surjective on $2\mathrm{CH}_i(X)/ \mathbb Z\cdot H_X^{n-i}$.\\
\indent If moreover, $n\geq 2^r+1$ for some $r>0$ and resolution of singularities holds of $k$-varieties of dimension $r$, then $P_*:\mathrm{CH}_{i-1}(F(X))\rightarrow \mathrm{CH}_i(X)/ \mathbb Z\cdot H_X^{n-i}$ is surjective for any $i\neq \frac{n}{2},\ 1\leq i\leq r$.
\end{theoreme}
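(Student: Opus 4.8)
The plan is to reduce the ``moreover'' part to the first assertion by combining it with Theorem \ref{thm_form_osc}, and to prove the first assertion by a residuation argument built from the linear space $L=\mathbb P^{d}\subset X$.

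\emph{Reduction.} Applied to the cubic $X$ (of degree $3$) and to $i$-cycles, Theorem \ref{thm_form_osc} requires $\sum_{j=1}^{2}j^{i}=1+2^{i}\le n$, which holds when $i\le r$ since $n\ge 2^{r}+1$; so, using resolution of singularities in dimension $i$ when $i>3$ as hypothesized, every $\Gamma\in\mathrm{CH}_{i}(X)$ satisfies $3\Gamma+P_{*}(\gamma)\in\mathbb Z\cdot H_{X}^{n-i}$ for some $\gamma\in\mathrm{CH}_{i-1}(F(X))$. Writing $A_{i}:=\mathrm{CH}_{i}(X)/\mathbb Z\cdot H_{X}^{n-i}$ and $\overline{P_{*}}$ for the induced homomorphism, this says $3A_{i}\subseteq\mathrm{Im}(\overline{P_{*}})$, while the first assertion gives $2A_{i}\subseteq\mathrm{Im}(\overline{P_{*}})$. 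Since $\mathrm{Im}(\overline{P_{*}})$ is a subgroup and $1=3-2$, it equals $A_{i}$, which is exactly the ``moreover'' statement; and the excluded value $i=n/2$ cannot occur under these hypotheses, since $n=2i\le 2r<2^{r}+1\le n$ is impossible.

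\emph{The first assertion.} Let $\Sigma\subset X$ be integral of dimension $i$, with $1\le i\le d$ and $2i\neq n$; we must show $2[\Sigma]\in\mathbb Z\cdot H_{X}^{n-i}+\mathrm{Im}(P_{*})$. If $\Sigma\subseteq L$ this is clear, $[\Sigma]$ being a multiple of the class of a linear $\mathbb P^{i}\subset L\subset X$, which is ruled by its lines and hence lies in $\mathrm{Im}(P_{*})$. So assume $\Sigma\not\subseteq L$. For general $p\in\Sigma$ the span $\langle L,p\rangle$ is a $\mathbb P^{d+1}$, and because $L\subset X$ the cubic form factors there as (the equation of $L$) times a quadratic form, so $X\cap\langle L,p\rangle=L\cup Q_{p}$ with $Q_{p}$ a quadric $d$-fold through $p$; hence $Q_{p}\cap L$ is a quadric $(d-1)$-fold in $L$. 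As $p$ moves this is a quadric bundle over a dense open of $\Sigma$; as $\mathrm{char}(k)\neq 2$ its generic fibre acquires a point over an extension of $k(\Sigma)$ of degree at most $2$, which, the fibres being quadrics, we may choose off the tangent hyperplane $T_{p}Q_{p}$. This produces a generically finite cover $\tau\colon\widetilde\Sigma\to\Sigma$ of degree $2$ (replace $\widetilde\Sigma$ by two copies of $\Sigma$ if the degree drops to $1$) together with a rational assignment $\widetilde p\mapsto x(\widetilde p)\in Q_{p}\cap L$, $p=\tau(\widetilde p)$. For general $\widetilde p$ the chord $\ell_{\widetilde p}:=\overline{p\,x(\widetilde p)}$ meets $Q_{p}$ transversally at $p$, hence is contained neither in $L$ nor in $Q_{p}$, hence not in $X$; and it cuts $X$ with multiplicity $2$ at $x(\widetilde p)\in L$ and $1$ at $p$. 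Let $\widetilde\Sigma_{1}\subsetneq\widetilde\Sigma$ be the proper closed locus where instead $\ell_{\widetilde p}\subset X$.

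\emph{The ruled variety.} Replace $\widetilde\Sigma$ by the normalisation of the closure of the graph of $\widetilde p\mapsto[\ell_{\widetilde p}]$, so that no resolution of singularities is needed; let $\mathbb P_{\widetilde\Sigma}$ be the pull-back of the universal line over the Grassmannian of lines in $\mathbb P^{n+1}$, a $\mathbb P^{1}$-bundle of dimension $i+1$, and let $f\colon\mathbb P_{\widetilde\Sigma}\to\mathbb P^{n+1}$ be the tautological morphism. Since $\mathrm{CH}_{i+1}(\mathbb P^{n+1})=\mathbb Z\cdot H^{n-i}$ we have $f_{*}[\mathbb P_{\widetilde\Sigma}]=m\,H^{n-i}$ for some $m\in\mathbb Z$, so $m\,H_{X}^{n-i}=i_{X}^{*}f_{*}[\mathbb P_{\widetilde\Sigma}]=f_{*}\bigl(i_{X}^{!}[\mathbb P_{\widetilde\Sigma}]\bigr)$ by the projection formula for the Gysin map of the divisor $X$. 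Now $i_{X}^{!}[\mathbb P_{\widetilde\Sigma}]$ is the cycle of the divisor $f^{*}X$ on $\mathbb P_{\widetilde\Sigma}$, which along the general fibre $\ell_{\widetilde p}$ equals $2[x(\widetilde p)]+[p]$, so it is $2$ times the section $\widetilde p\mapsto(\ell_{\widetilde p},p)$, plus once the section $\widetilde p\mapsto(\ell_{\widetilde p},x(\widetilde p))$, plus a cycle supported over $\widetilde\Sigma_{1}$. Pushing forward by $f$: the first section gives $2[\Sigma]$ (as $\tau$ has degree $2$); the second gives a cycle of dimension $\le i\le d$ on $L\cong\mathbb P^{d}$, hence for $i\ge1$ a multiple of a linear $\mathbb P^{j}\subset L\subset X$, or zero; the last gives a cycle swept out by lines contained in $X$, or zero. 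The latter two lie in $\mathrm{Im}(P_{*})$, whence $2[\Sigma]=m\,H_{X}^{n-i}-(\text{a class in }\mathrm{Im}(P_{*}))\in\mathbb Z\cdot H_{X}^{n-i}+\mathrm{Im}(P_{*})$ (and if $m=0$ one even gets $2[\Sigma]\in\mathrm{Im}(P_{*})$).

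\emph{Main difficulty.} The real content is the last step: one must verify that, besides $\widetilde\Sigma_{1}$, the only other non-generic contributions to the divisor $f^{*}X$ --- over the loci where $\langle L,p\rangle$ drops dimension, where $Q_{p}$ is singular at $p$, or where the chord degenerates --- are again swept by lines of $X$, supported on $L$, or of dimension $<i$, and that the fibrewise multiplicities are precisely $2$ and $1$. This is where the hypotheses $2i\neq n$ and $\mathrm{char}(k)\neq 2$ are used, and where working with the normalisation of a graph rather than with a resolution keeps the argument valid in positive characteristic; the rest is formal.
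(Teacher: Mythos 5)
Your reduction of the ``moreover'' part is exactly the paper's: apply Theorem \ref{thm_form_osc} to $i$-cycles (the condition $1+2^{i}\le n$ follows from $i\le r$ and $n\ge 2^{r}+1$) to get $3\Gamma+P_*\gamma'\in\mathbb Z\cdot H_X^{n-i}$, and subtract the $2\Gamma$-statement. For the first assertion, however, you take a genuinely different route. The paper uses the degree-$2$ unirational parametrization of $X$ from a general line (Clemens--Griffiths), notes that $\Gamma-\deg(\Gamma)[\mathcal P]$ is homologically trivial (this is where $2i\neq n$ enters), applies the blow-up decomposition of Chow groups to get $2(\Gamma-\deg(\Gamma)[\mathcal P])=P_*\gamma+H_X\cdot P_*D_\Gamma$, and then must do real work to show $H_X\cdot P_*D_\Gamma\in\mathrm{Im}(P_*)$, via a nodal hyperplane section $Y\subset X$ birational to $\mathbb P^{n-1}$ and the inclusion $F_{p_0}(Y)\subset F(X)$. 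You instead residuate chords from $p\in\Sigma$ to the quadric $Q_p\cap L$ inside the fixed linear space $L$; when it works this is more direct and bypasses the nodal-section lemma (and, tellingly, never actually uses $2i\neq n$, even though you assert it is used in the ``Main difficulty'').

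The gap is in that last step, and it is not where you locate it. Your construction presupposes that for the \emph{general} point $p\in\Sigma$ there is $x\in Q_p\cap L$ with $\overline{px}\not\subset X$. Since $X\cap\langle L,p\rangle=L\cup Q_p$, such an $x$ exists only if $Q_p$ is smooth at $p$ and $Q_p\cap L\not\subset T_pQ_p$: if $Q_p$ is singular at $p$ (equivalently $L\subset T_pX$, as $\langle L,p\rangle\subset T_pX$), every chord from $p$ to a point of $Q_p$ meets the quadric $Q_p$ with total multiplicity $\ge 3$ and hence lies in $Q_p\subset X$; similarly if $Q_p\cap L$ is the double hyperplane $2(T_pQ_p\cap L)$, every chord is tangent to $Q_p$ at $p$ and again lies in $X$. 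These are closed conditions on $p$ that can hold identically along $\Sigma$: the locus $\{p\in X:\ L\subset T_pX\}$ is cut out by $d+1$ quadrics, so it has dimension at least $n-d-1$, and already for $i=1$, $d=1$, $n\ge 3$ there are curves $\Sigma$ entirely contained in it. Your ``Main difficulty'' paragraph treats the degenerate loci as proper closed subsets of $\Sigma$ contributing only line-swept or lower-dimensional junk; it does not address the case where the degeneracy is generic on $\Sigma$, in which your method only produces lines of $X$ through the points of $\Sigma$ and, as in Case 1 of Theorem \ref{thm_form_osc}, leads to a leftover term of the form $H_X\cdot P_*(\cdot)$ --- i.e.\ either to the factor-$3$ statement or to precisely the term the paper eliminates with the nodal hyperplane section, a step with no counterpart in your sketch. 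There is also a bookkeeping slip: in your decomposition of $f^*X$ the fibrewise multiplicities are swapped; the correct decomposition is $1\cdot\{(\ell_{\tilde p},p)\}+2\cdot\{(\ell_{\tilde p},x(\tilde p))\}+(\text{terms over }\widetilde\Sigma_1)$, and it is the degree $2$ of $\tau$ on the first section that yields $2[\Sigma]$; as written, $2$ times the section through $p$ would push forward to $4[\Sigma]$.
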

\begin{proof} According to \cite[Appendix B]{Cl-Gr}, $X$ admits a unirational parametrization of degree $2$ constructed as follows: for a general line $\Delta$ in $X$, consider the projective bundle $P(T_{X|\Delta})$ over $\Delta$ and the rational map $\varphi: P(T_{X|\Delta})\dashrightarrow X$ which to a point $x\in \Delta$ and a nonzero vector $v\in T_{X,x}$ associates the residual point to $x$ ($x$ has multiplicity $2$) in the intersection $X\cap l_{(x,v)}$ of $X$ with the line of $\mathbb P^{n+1}$ determined by $(x,v)$. The indeterminacy locus $Z$ corresponds to the $(x,v)$ such that $l_{(x,v)}\subset X$. It has codimension $2$ for general lines. Indeed, if $\Delta$ is general, it is generally contained in the locus where the fibers of the projection $q:P\rightarrow X$ are complete (since $P$ has dimension $2n-d$) intersection of type $(2,3)$ in the projectivized tangent spaces so that the general fiber of $Z\rightarrow \Delta$ has dimension $n-3$. Choosing a sufficiently general $\Delta$, we can also assume that $Z$ is smooth. Then, blowing-up $P(T_{X|\Delta})$ along $Z$ yields the resolution of indeterminacies; let us denote $\tau:\widetilde{P(T_{X|\Delta})}\rightarrow P(T_{X|\Delta})$ that blow-up, $E$ the exceptional divisor and $\tilde{\varphi}:\widetilde{P(T_{X|\Delta})}\rightarrow X$ the resulting degree $2$ morphism. For $1\leq i\leq d$, by the formulas for blowing-up, we have the decomposition
$$\mathrm{CH}_i(\widetilde{P(T_{X|\Delta})})=\tau^*\mathrm{CH}_i(P(T_{X|\Delta})) \oplus j_{E,*}\tau_{|E}^*\mathrm{CH}_{i-1}(Z)\oplus j_{E,*}(j_E^*\tilde{\varphi}^*H_{X})\cdot \tau_{|E}^*\mathrm{CH}_i(Z).$$
%By the construction of $\varphi$, $\tilde{\varphi}_*\tau^*:\mathrm{CH}_i(P(T_{X|\Delta}))\rightarrow \mathrm{CH}_i(X)$ factors through $\mathrm{CH}_{i+1}(\mathbb P^{n+1})$.
\indent As $\tau_{|E}$ is flat, we can see that $\tilde{\varphi}_*j_{E,*}\tau_{|E}^*(\cdot)=\tilde{\varphi}_*j_{E,*} [\tau_{|E}^{-1}(\cdot)]$ identifies with the composition of the morphism $\mathrm{CH}_*(Z)\rightarrow \mathrm{CH}_*(F(X))$ (induced by the restriction of natural morphism $P(T_{X})\rightarrow G(2,n+2)$) followed by the action $P_*$.\\
\indent So let $\Gamma\in \mathrm{CH}_i(X)$, with $2i\neq n$, be a cycle on $X$. As $X$ contains a linear space of dimension $i$ and $H_{\textit{\'et}}^{2(n-i)}(X,\mathbb Z_\ell)=\mathbb Z_\ell$ ($\forall \ell\neq char(k)$) by Lefschetz hyperplane section theorem ($n\neq 2i$), for any $\mathcal P\simeq\mathbb P^i\subset X$, $\Gamma -deg(\Gamma)[\mathcal P]$ is homologically trivial and $\tilde{\varphi}_*\tilde{\varphi}^*(\Gamma -deg(\Gamma)[\mathcal P])=2(\Gamma -deg(\Gamma)[\mathcal P])$. As $P(T_{X|\Delta})$ is a projective bundle over $\mathbb P^1$, $\mathrm{CH}_*(P(T_{X|\Delta}))_{hom}=0$ so, from the above discussion, we conclude that there are a $(i-1)$-cycle $\gamma\in \mathrm{CH}_{i-1}(F(X))_{hom}$ and a $i$-cycle $D_\Gamma\in \mathrm{CH}_i(F(X))_{hom}$ such that $$2(\Gamma -deg(\Gamma)[\mathcal P])= P_*\gamma + H_X\cdot P_*D_\Gamma.$$
It remains to deal with the term $H_X\cdot P_*D_\Gamma$.
 For this, let $j:Y\hookrightarrow X$ be a hyperplane section with one ordinary double point $p_0$ as singularity. Then $H_X\cdot P_*D_\Gamma=j_*j^*P_*D_\Gamma$. \\
\indent We have $Y\subset \mathbb P^n$ and if we choose coordinates in which $p_0=[0:\cdots:0:1]$, the equation of $Y$ has the following form: $F(X_0,\cdots,X_n)=X_nQ(X_0,\cdots,X_{n-1})+T(X_0,\cdots,X_{n-1})$ where $Q(X_0,\cdots,X_{n-1})$ is a quadratic homogeneous polynomial and $T(X_0,\cdots,X_{n-1})$ is a degree $3$ homogeneous polynomial. The linear projection $\mathbb P^n\dashrightarrow \mathbb P^{n-1}$ centered at $p_0$ induces a birational map $Y\dashrightarrow \mathbb P^{n-1}\simeq [p_0]$ where $[p_0]$ denotes the scheme parametrizing lines of $\mathbb P^n$ passing through $p_0$. The indeterminacies of the inverse map $\mathbb P^{n-1}\dashrightarrow Y$ are resolved by blowing-up $\mathbb P^{n-1}$ along the complete intersection $F_{p_0}(Y)=\{Q=0\}\cap \{T=0\}$ of type $(2,3)$. The variety of lines of $Y$ passing through $p_0$ is isomorphic to $F_{p_0}(Y)$ and we have the following diagram:
$$\xymatrix{\indent \indent \widetilde{\mathbb P^{n-1}}^{F_{p_0}(Y)}\ar[d]^{\chi}\ar[dr]^{q} & \\
\mathbb P^{n-1}\ar@{.>}[r] &Y}$$
By projection formula, $(j\circ q)_*(j\circ q)^*P_*D_\Gamma= P_*D_\Gamma\cdot j_*q_*1= P_*D_\Gamma\cdot [Y]= P_*D_\Gamma\cdot H_X$ and $(j\circ q)^*P_*D_\Gamma$ is a homologically trivial cycle on $\widetilde{\mathbb P^{n-1}}^{F_{p_0}(Y)}$. But since the ideal $\mathrm{CH}_*(\mathbb P^{n-1})_{hom}$ of homologically trivial cycles on $\mathbb P^{n-1}$ is $0$, from the decomposition of the Chow groups of a blow-up, we get that $(j\circ q)^*P_*D_\Gamma$ can be written $j_{E_{F_{p_0}(Y)},*}\chi_{|E_{F_{p_0}(Y)}}^*w$ for a cycle $w$ on $F_{p_0}(Y)$ so that $H_X\cdot P_*D_\Gamma = j_*q_*j_{E_{F_{p_0}(Y)},*}\chi_{|E_{F_{p_0}(Y)}}^*w$ which can be written $P_*i_{F_{p_0}(Y),*}w$ where $i_{F_{p_0}(Y)}:F_{p_0}(Y)\hookrightarrow F(X)$ is the inclusion. 
%Indeed, a term of the form $j_{E_{F_{p_0}(Y)},*}E_{F_{p_0}(Y)|E_{F_{p_0}(Y)}}\chi_{|E_{F_{p_0}(Y)}}^*w'$ would give a term of the form $H_X^2P_*(\cdot)$ which does not exist by uniqueness of decomposition of Chow groups of projective bundle.
Finally, $\mathcal P$ is in $Im(P_*)$ so we have: $2\Gamma = 2\mathcal P + P_*(\gamma + i_{F_{p_0}(Y),*}w)$ which proves that $2\mathrm{CH}_i(X)$ is in the image of $P_*$.\\
\indent When $n\geq 2^r+1$, we can also apply Theorem \ref{thm_form_osc}; we get, for any cycle $\Gamma\in \mathrm{CH}_i(X)$, a cycle $\gamma'\in \mathrm{CH}_{i-1}(F(X))$ such that $3\Gamma + P_*\gamma'\in \mathbb Z\cdot H_X^{n-i}$ in $\mathrm{CH}_i(X)$ so that, putting the two steps together, we get $(3-2)\Gamma + P_*(\gamma' -\gamma - i_{F_{p_0}(Y),*}w) \in \mathbb Z\cdot H_X^{n-i}$ in $\mathrm{CH}_i(X)$.

%Now by \cite[Theorem 1]{Bl-Sr}, homological and algeraic equivalences coincide for codimension $2$ cycles on $X$ and the group of codimension $2$ cycles algebraically equivalent to $0$ is isomorphic to the group of the complex points of the intermediate Jacobian $J^3(X)$. This last group which is $0$ since $H^3(X,\mathbb Z)=0$ so that in fact, homological and rational equivalences coincide for codimension $2$ cycles on $X$. So $P_*z'_\gamma$ is rationally equivalent to $0$. As $\mathcal P\in Im(P_*)$ (take a line in $F(X)$), we conclude that morphism $P_*:\mathrm{CH}_1(F(X))\rightarrow \mathrm{CH}_2(X)\ \mathrm{mod}\ \mathbb Z\cdot H^3$ is surjective on $2\mathrm{CH}_2(X)\ \mathrm{mod}\ \mathbb Z\cdot H^3$. Using Theorem \ref{thm_form_osc}, we see that $P_*:\mathrm{CH}_1(F(X))\rightarrow \mathrm{CH}_2(X)\ \mathrm{mod}\ \mathbb Z\cdot H^3$ is surjective on $3\mathrm{CH}_2(X)\ \mathrm{mod}\ \mathbb Z\cdot H^3$.
\end{proof}

\begin{proposition}\label{prop_ruled_hyperpl} Let $X\subset \mathbb P^{n+1}_{k}$, with $n\geq 4$ and $char(k)>2$, be a smooth cubic hypersurface. Then $H_X^{n-2}\in Im(P_*:\mathrm{CH}_1(F(X))\rightarrow \mathrm{CH}_2(X))$. In particular, by Theorem \ref{prop_gen_5}, for $n\geq 5$, $P_*:\mathrm{CH}_1(F(X))\rightarrow \mathrm{CH}_2(X)$ is surjective.
\end{proposition}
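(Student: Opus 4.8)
The plan is to realize $H_X^{n-2}$ as the class of a cubic surface section $S=X\cap\mathbb P^3$ of $X$ which happens to be a \emph{cone} over a plane cubic curve, with vertex a point $v\in X$. Such a cone is ruled by the lines joining $v$ to the points of the base curve; these lines all lie on $X$ and are parametrized by a curve in $F(X)$, so $[S]\in\mathrm{Im}(P_*)$. On the other hand, $S$ being a linear section of $X$ by some $\mathbb P^3\not\subset X$, it is rationally equivalent to the standard linear section, hence $[S]=H_X^{n-2}$ in $\mathrm{CH}_2(X)$, and the proposition follows.

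First I would make the reduction precise. Fix $v\in X$ and work in affine coordinates centered at $v$, writing an equation of $X$ as $f=\ell_1+q_2+c_3$ with $\ell_i$ homogeneous of degree $i$; here $\{\ell_1=0\}$ is the projective tangent space $\mathbb T_vX$ and $q_2$ represents the second fundamental form $\mathrm{II}_v$ of $X$ at $v$, a quadratic form on the $n$-dimensional space $\mathbb T_vX/\langle v\rangle$. For a linear $\mathbb P^3=\mathbb P(W)$ through $v$ (so $\langle v\rangle\subset W$, $\dim W=4$), the point $v$ has multiplicity $\ge 3$ on $S=X\cap\mathbb P(W)$ exactly when $\ell_1|_W=0$ and $q_2|_W=0$, i.e.\ when $W/\langle v\rangle$ is a $3$-dimensional totally isotropic subspace of $\mathrm{II}_v$. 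In that case $f|_{\mathbb P(W)}=c_3|_W$, so $S$ is precisely the cone with vertex $v$ over the plane cubic $C=\{c_3|_W=0\}\subset\mathbb P(W/\langle v\rangle)=\mathbb P^2$; the ruling lines are the lines through $v$ in the directions of $C$ (they lie on $X$ since $f$ restricted to each such line vanishes identically), they form a $1$-cycle $\gamma\in\mathrm{CH}_1(F(X))$ supported on a curve isomorphic to $C$, and $P_*(\gamma)=q_*p^*\gamma=[S]$. Provided $c_3|_W\not\equiv 0$ (i.e.\ $\mathbb P(W)\not\subset X$) we get $[S]=H_X^{n-2}$ and are done; this non-degeneracy is automatic for $n\le 5$, since a smooth cubic $n$-fold with $n\le 5$ contains no $\mathbb P^3$, and for $n\ge 6$ it holds for generic choices of $v$ and $W$.

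It remains to find, for every smooth $X$ with $n\ge 4$, a point $v\in X$ at which $\mathrm{II}_v$ admits a $3$-dimensional totally isotropic subspace. Over the algebraically closed field $k$ (characteristic $\neq 2$), a quadratic form of rank $\rho$ on $k^n$ has maximal totally isotropic subspaces of dimension $n-\lceil\rho/2\rceil$, so the requirement is $\mathrm{rank}(\mathrm{II}_v)\le 2n-6$. For $n\ge 6$ this is no condition at all and any $v\in X$ works. For $n=5$ it says $\mathrm{II}_v$ is degenerate, i.e.\ $v$ lies on the Hessian hypersurface; and either the Hessian determinant of $f$ vanishes identically (so every point of $X$ has degenerate $\mathrm{II}$), or it cuts out a hypersurface of $\mathbb P^{6}$, which meets the $5$-fold $X$ for dimension reasons. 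For $n=4$ it says $\mathrm{rank}(\mathrm{II}_v)\le 2$; using $\mathrm{Hess}(f)(v)\cdot v=2\nabla f(v)\neq 0$ one checks $\mathrm{rank}\,\mathrm{Hess}(f)(v)=\mathrm{rank}\,\mathrm{II}_v+2$, so this is the condition $\mathrm{corank}\,\mathrm{Hess}(f)(v)\ge 2$, i.e.\ $v$ lies in the intersection of $X$ with the corank-$\ge 2$ locus of the $6\times 6$ symmetric matrix of linear forms $\mathrm{Hess}(f)$ on $\mathbb P^5$. That locus is determinantal of codimension $\le\binom{3}{2}=3$, hence of dimension $\ge 2$, hence meets the $4$-fold $X$.

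The genuinely non-formal point, and the one I expect to require the most care, is the cubic fourfold case $n=4$: one must ensure the ``very degenerate'' locus $\{v\in X:\mathrm{rank}\,\mathrm{II}_v\le 2\}$ is non-empty for \emph{every} smooth $X$, not just the generic one. The clean route is the degeneracy-locus argument above — upper semicontinuity of fiber dimension in the universal family of symmetric matrices of linear forms forces the corank-$\ge 2$ locus of $\mathrm{Hess}(f)$ to have dimension $\ge 2$ in $\mathbb P^5$ — but one should verify that $\mathrm{Hess}(f)$ is never so degenerate that this breaks (e.g.\ check it directly on the Fermat cubic as a sanity test). Everything else is elementary geometry of cones together with the definition $P_*=q_*p^*$.
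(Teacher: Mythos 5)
Your proposal is essentially correct, but it takes a genuinely different route from the paper. The paper starts from a line of second type $l_0\subset X$ (existence via Murre's lemma on cubic threefolds) and the linear space tangent to $X$ along $l_0$ (Clemens--Griffiths, Lemma 6.7): choosing a $\mathbb P^3$ tangent to $X$ along $l_0$ gives a cubic surface $S=\mathbb P^3\cap X$ singular along $l_0$, ruled by the residual lines of the plane sections through $l_0$, whence $H_X^{n-2}=[S]=P_*([D])$. You instead produce a ruled linear section as a cone: a point $v\in X$ at which $\mathrm{II}_v$ admits a $3$-dimensional totally isotropic subspace $W/\langle v\rangle$, so that $S=X\cap\mathbb P(W)$ is the cone over a plane cubic, swept out by lines of $X$ through $v$. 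Both proofs reduce the statement to exhibiting a ruled cubic surface which is a linear section of $X$; yours replaces the input from Murre and Clemens--Griffiths by elementary quadratic-form linear algebra together with a dimension count, which is more self-contained, at the price of a case division on $n$.

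Two steps of your sketch need to be made precise, and both can be. First, for $n\geq 6$ you must produce one pair $(v,W)$ with $\mathbb P(W)\not\subset X$, and ``generic choices'' is not yet an argument: note that if for a given $v$ every $3$-dimensional isotropic $W$ satisfied $c_3|_W\equiv 0$, then, since for $n\geq 6$ every isotropic vector of $\mathrm{II}_v$ lies in a $3$-dimensional isotropic subspace, $c_3$ would vanish on the whole cone $\{\ell_1=q_2=0\}$, i.e.\ the fiber of $q:P\rightarrow X$ over $v$ would have dimension $n-2$; since $\dim P=2n-4$, this cannot happen for $v$ general in $X$, so a good pair exists. Second, for $n=4$ the phrase ``upper semicontinuity of fiber dimension'' is not what gives non-emptiness of the corank-$2$ locus; the correct mechanism is the projective dimension theorem applied in $\mathbb P(\mathrm{Sym}^2 k^6)\simeq\mathbb P^{20}$: the locus of symmetric matrices of corank $\geq 2$ has codimension $3$, hence dimension $17$, while $v\mapsto \mathrm{Hess}(f)(v)$ is a linear map which is injective because $\mathrm{Hess}(f)(v)=0$ forces $\nabla f(v)=0$ and hence (away from at most one point off $X$ in characteristic $3$, harmlessly) contradicts smoothness; its image is a linear $\mathbb P^5$, and $17+5\geq 20$ gives a non-empty corank-$\geq 2$ locus of dimension $\geq 2$ in $\mathbb P^5$, which therefore meets the hypersurface $X$. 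With these two points filled in, your identities $\mathrm{rank}\,\mathrm{Hess}(f)(v)=\mathrm{rank}\,\mathrm{II}_v+2$, the isotropy bound $\mathrm{rank}\,\mathrm{II}_v\leq 2n-6$, the equality $[S]=P_*(\gamma)$ for the cone, and $[S]=H_X^{n-2}$ (proper linear section) are all correct, and the level of detail at the last two steps matches that of the paper's own proof.
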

\begin{proof} Since, according to \cite[Lemma 1.4]{Mur_alg_mod_rat}, any smooth cubic threefold contains some lines of second type (lines whose normal bundle contains a copy of $\mathcal O_{\mathbb P^1}(-1)$), $X$ contains lines of second type. Let $l_0\subset X$ be a line of second type. According to \cite[Lemma 6.7]{Cl-Gr}, there is a (unique) $\mathbb P^{n-1}\subset \mathbb P^{n+1}$ tangent to $X$ along $l_0$. So, when $n\geq 4$, we can choose a $P_0\simeq \mathbb P^3\subset \mathbb P^{n+1}$ tangent to $X$ along $l_0$. Then $S:=P_0\cap X$ is a cubic surface singular along $l_0$ which is ruled by lines of $X$. Indeed, for any $x\in S\backslash l_0$, $span(x,l_0)\cap S$ is a plane cubic containing $l_0$ with multiplicity $2$; so that the residual curve is a line passing through $x$. So, we can write $S=q(p^{-1}(D))$ for a closed subscheme of pure dimension $1$, $D\subset F(X)$ so, in $\mathrm{CH}_2(X)$, we have $H_X^{n-2}=[S]=P_*([D])$.
\end{proof}

Here is one consequence of this proposition:
\begin{corollaire}\label{cor_spec} Let $\pi:\mathcal{X}\rightarrow B$ be a family of complex cubic hypersurfaces of dimension $n\geq 5$ i.e. $\pi$ is a smooth projective morphism of connected quasi-projective complex varieties with $n$-dimensional cubic hypersurfaces as fibers. Then, the specialization map $$\mathrm{CH}_2(X_{\overline{\eta}})/alg\rightarrow \mathrm{CH}_2(X_t)/alg$$ where $X_{\overline{\eta}}$ is the geometric generic fiber and $X_t:=\pi^{-1}(t)$ for $t\in B(k)$ any closed point, is surjective.
\end{corollaire}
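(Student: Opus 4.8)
The statement to prove is that for a family $\pi:\mathcal X\to B$ of complex cubic hypersurfaces of dimension $n\geq 5$, the specialization map $\mathrm{CH}_2(X_{\overline\eta})/\mathrm{alg}\to \mathrm{CH}_2(X_t)/\mathrm{alg}$ is surjective. The key input is Proposition~\ref{prop_ruled_hyperpl}: for $n\geq 5$, $P_*:\mathrm{CH}_1(F(X))\to \mathrm{CH}_2(X)$ is surjective for \emph{every} smooth cubic hypersurface $X$ of dimension $n$. So first I would reduce the problem on $2$-cycles on the fibers $X_t$ to the problem on $1$-cycles on the fibers $F(X_t)$ of the relative variety of lines $\mathcal F=F(\mathcal X/B)\to B$, which is again a smooth projective morphism with the universal $\mathbb P^1$-bundle $\mathcal P\subset \mathcal F\times_B\mathcal X$ carrying the relative correspondence $P_t$ on each fiber. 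The maps $P_{t,*}$ are induced by a single relative cycle, hence are compatible with specialization; so it suffices to prove surjectivity of the specialization map $\mathrm{CH}_1(F(X_{\overline\eta}))/\mathrm{alg}\to \mathrm{CH}_1(F(X_t))/\mathrm{alg}$.

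Then I would invoke the general principle that for a smooth projective family, the specialization map on Chow groups modulo algebraic equivalence is always surjective — this is essentially because algebraic equivalence allows one to "spread out" any cycle on a special fiber to a cycle on (a finite cover of) the base, then restrict to the generic fiber, the two restrictions differing by something algebraically equivalent to zero. Concretely: given $Z_t\in \mathrm{CH}_1(F(X_t))$, after possibly a finite base change I can extend $Z_t$ to a relative cycle $\mathcal Z$ on $\mathcal F$ flat over (an open subset of) $B$ containing $t$; its restriction to the generic fiber gives a class $Z_{\overline\eta}$ whose specialization at $t$ is $Z_t$ modulo algebraic equivalence, since a rational equivalence over the base restricts fiberwise and the "Hilbert scheme connectedness" of fibers in a family realizes the correction term as algebraic equivalence on $F(X_t)$. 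Chasing this through $P_*$ then gives the surjectivity for $2$-cycles on the $X_t$.

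The main obstacle — and the place where $n\geq 5$ is genuinely used, as opposed to just invoking the soft specialization principle — is that we need the comparison to take place after quotienting by $\mathbb Z\cdot H_X^{n-2}$ in a harmless way, and more importantly we need $P_*$ surjective \emph{uniformly} over the family, including at the geometric generic point, where $F(X_{\overline\eta})$ is the variety of lines of a cubic over $\overline{\mathbb C(B)}$. Proposition~\ref{prop_ruled_hyperpl} is stated over an arbitrary algebraically closed field of characteristic $>2$, so it does apply to $X_{\overline\eta}$; I would just make sure the line of second type and the tangent $\mathbb P^3$ used there exist over the generic point (they do, being general conditions), so that $H_{X_{\overline\eta}}^{n-2}$ and hence all of $\mathrm{CH}_2(X_{\overline\eta})$ lies in the image of $P_{*}$. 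Once both $P_*$'s are surjective and compatible with specialization, and the specialization map for $1$-cycles mod algebraic equivalence on $\mathcal F\to B$ is surjective by the standard argument, a diagram chase
$$
\begin{CD}
\mathrm{CH}_1(F(X_{\overline\eta}))/\mathrm{alg} @>{P_*}>> \mathrm{CH}_2(X_{\overline\eta})/\mathrm{alg}\\
@VV{\mathrm{sp}}V @VV{\mathrm{sp}}V\\
\mathrm{CH}_1(F(X_t))/\mathrm{alg} @>{P_*}>> \mathrm{CH}_2(X_t)/\mathrm{alg}
\end{CD}
$$
with the top and bottom arrows surjective and the left vertical arrow surjective forces the right vertical arrow to be surjective, which is exactly the claim.
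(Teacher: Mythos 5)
Your overall architecture (reduce via the relative correspondence $\mathcal P\subset F(\mathcal X)\times_B\mathcal X$, use Proposition \ref{prop_ruled_hyperpl} on both the geometric generic fiber and the closed fiber, note that $P_*$ commutes with specialization, and chase the square) is exactly the paper's, and that part is fine: surjectivity of the bottom horizontal and left vertical arrows already forces surjectivity of the right vertical one. The genuine gap is the step you treat as soft, namely the claim that for an arbitrary smooth projective family the specialization map $\mathrm{CH}_1(Y_{\overline\eta})/\mathrm{alg}\rightarrow \mathrm{CH}_1(Y_t)/\mathrm{alg}$ is always surjective. This is false, and your justification goes in the wrong direction: spreading out works for cycles on the \emph{generic} fiber (extend over an open subset of $B$ and then specialize), but extending a given cycle $Z_t$ on the \emph{special} fiber to a relative cycle flat over a neighbourhood of $t$ is an obstructed deformation problem. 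Components of the relative Hilbert scheme (or Chow variety) of $\mathcal F/B$ may lie entirely over proper closed subsets of $B$, so there need be no family through $Z_t$ dominating $B$. A standard counterexample is Noether--Lefschetz: in a family of smooth quartic surfaces in $\mathbb P^3$, $\mathrm{CH}_1/\mathrm{alg}=\mathrm{NS}$, the geometric generic fiber has $\mathrm{NS}\simeq\mathbb Z$ while special fibers can have larger N\'eron--Severi group, so specialization on $1$-cycles modulo algebraic equivalence is not surjective.

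What rescues the argument — and what the paper uses — is that the fibers $F(X_t)$ are rationally connected (they are Fano precisely because $n\geq 5$; for $n=4$ the Fano variety of lines is hyperk\"ahler and the argument would break down). The paper invokes \cite[Lemma 2.1]{Vois_rem_rat_curv}: in a smooth projective family with rationally connected fibers, any curve $C\subset Y_t$ becomes, after attaching sufficiently many very free rational curves and smoothing, algebraically equivalent to a combination of curves $C_i$ with $H^1(C_i,N_{C_i/Y_t})=0$; such $C_i$ have unobstructed deformations relative to $B$, hence lift to curves over a finite extension of $k(B)$ specializing to them. So the hypothesis $n\geq 5$ is used twice — once for the surjectivity of $P_*$ (as you note), and once, essentially, to make $F(X_t)$ rationally connected so that the specialization surjectivity for $1$-cycles modulo algebraic equivalence actually holds. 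Your proposal needs this deformation-theoretic input (or a citation of Voisin's lemma) in place of the purported general principle.
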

\begin{proof} The statement follows from Proposition \ref{prop_ruled_hyperpl} and the following property, essentially written in the proof of \cite[Lemma 2.1]{Vois_rem_rat_curv}
\begin{proposition}\textit{(\cite[Lemma 2.1]{Vois_rem_rat_curv}).} Let $\pi:\mathcal Y\rightarrow B$ be a smooth projective morphism with rationally connected fibers. Then for any $t\in B(k)$, the specialization map $\mathrm{CH}_1(Y_{\overline{\eta}})/alg\rightarrow \mathrm{CH}_1(Y_t)/alg$ is surjective.
\end{proposition}
\begin{proof} We just recall briefly the proof: by attaching sufficiently very free rational curves to it (so that the resulting curve is smoothable), any curve $C\subset Y_t$ is algebraically equivalent to a (non effective) sum of curves $C_i\subset Y_t$ such that $H^1(C_i, N_{C_i/Y_t})=0$. Then the morphism of deformation of each $(C_i,Y_t)$ to $B$ is smooth. So we have a curve $C_{i,\eta}\subset Y_{K_i}$ where $K_i$ is a finite extension of the function field of $B$, which is sent by specialization in the fiber $Y_t$, to $C_i$.
\end{proof}
Applying this proposition to the relative variety of lines
%(relative Hilbert scheme)
$F(\mathcal X)\rightarrow B$, yields a surjective map: $\mathrm{CH}_1(F(X_{\overline\eta}))/alg\rightarrow \mathrm{CH}_1(F(X_t))/alg$. The universal $\mathbb P^1$-bundle $\mathcal P\subset F(\mathcal X)\times_B \mathcal X$ gives the surjective maps $\mathcal P_{t,*}:\mathrm{CH}_1(F(X_t))/alg\rightarrow \mathrm{CH}_2(X_t)/alg$ and $\mathcal P_{\overline\eta,*}:\mathrm{CH}_1(F(X_{\overline\eta}))/alg\rightarrow \mathrm{CH}_2(X_{\overline\eta})/alg$ and they commute (\cite[20.3]{Fulton}).
\end{proof}
\textit{}\\

\section{One-cycles on the variety of lines of a Fano hypersurface in $\mathbb P^n$}\label{sec_fano_var}
Throughout this section, $k$ will designate an algebraically closed field. According to \cite[Theorem 4.3, Chap. V]{rat-cur-kol}, for a general hypersurface $X\subset \mathbb P^{n+1}$ of degree $d\leq 2n-2$, the variety of lines $F(X)$ is smooth, connected of dimension $2n-d-1$. In the case of cubic hypersurfaces of dimension $n\geq 3$, we even know, by work of Altman and Kleiman (\cite[Corollary 1.12, Theorem 1.16]{Alt-Kl}, see also \cite{Barth-VdV}) that for any smooth hypersurface $X$, $F(X)$ is smooth and connected.\\
\indent We recall that, for a smooth hypersurface $X\subset \mathbb P_k^{n+1}$ of  degree $d$, when $F(X)$ has the expected dimension $2n-d-1$, it is the zero-locus in $G(2,n+2)$ of a regular section of ${\rm Sym}^d(\mathcal E_2)$, where $\mathcal E_2$ is the rank $2$ quotient bundle on $G(2,n+2)$ and its dualizing sheaf, given by adjunction formula (\cite[Theorem III 7.11]{Hart_alg_geo}), is $-((n+2) - \frac{d(d+1)}{2}))$ times the Pl\"ucker line bundle on $G(2,n+2)$ restricted to $F(X)$. In particular, when $F(X)$ is smooth, connected and $\frac{d(d+1)}{2}<(n+2)$, $F(X)$ is Fano so rationally connected.\\
\indent From now, we assume that the condition $d(d+1)< 2(n+2)$ holds and that $X\subset \mathbb P^{n+1}_k$ is a smooth hypersurface such that $F(X)$ is smooth and connected. Then the following theorem applies to $F(X)$ if $char(k)=0$ or, when $char(k)>0$, if $F(X)$ is, moreover separably rationally connected:
\begin{theoreme}\label{thm1_tian_zong}\textit{(\cite[Theorem 1.3]{Tian-Zong}).} Let $Y$ be a smooth proper and separably rationally connected variety over an algebraically closed field. Then every $1$-cycle is rationally equivalent to a $\mathbb Z$-linear combination of cycle classes of rational curves. That is, the Chow group $\mathrm{CH}_1(Y)$ is generated by rational curves.
\end{theoreme}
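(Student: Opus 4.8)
The plan is to reduce to irreducible curves and then induct on the genus: use separable rational connectedness to make an arbitrary curve ``free'' without changing its class modulo rational curves, and then trade a handle of the free curve for a node. Write $R\subset\mathrm{CH}_1(Y)$ for the subgroup generated by classes of rational curves; the goal is $R=\mathrm{CH}_1(Y)$.

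\textbf{Reduction to smooth curves.} Every $1$-cycle is a $\mathbb Z$-combination of classes of integral curves, and for an integral curve $C\subset Y$ with normalization $\nu\colon\widetilde C\to C\hookrightarrow Y$ one has $\nu_*[\widetilde C]=[C]$ in $\mathrm{CH}_1(Y)$. Hence it suffices to prove $f_*[C]\in R$ for every morphism $f\colon C\to Y$ from a smooth projective curve $C$, and I would argue by induction on $g=g(C)$; the case $g=0$ is immediate, so assume $g\ge 1$.

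\textbf{Making the curve free.} Since $Y$ is separably rationally connected, a general point of $Y$ lies on a very free rational curve. Attach to $(C,f)$, at $N\gg 0$ general points of $C$ with general images, very free rational curves $R_1,\dots,R_N$ transverse to $C$, forming a comb $f'\colon C'=C\cup R_1\cup\dots\cup R_N\to Y$ with handle $C$. By Kollár's smoothing-of-combs technology (see \cite{rat-cur-kol}), for $N$ large the comb is unobstructed, $H^1(C',f'^{*}T_Y)=0$, and $(C',f')$ deforms to $f''\colon C''\to Y$ with $C''$ smooth projective of genus $g$ and $f''^{*}T_Y$ as positive as one wishes. Smoothing all the nodes of $C'$ simultaneously along a line through $[f']$ in the (smooth) deformation space realizes this deformation as a family of $1$-cycles over $\mathbb A^1\subset\mathbb P^1$, so all its members have rationally equivalent images; thus $f''_{*}[C'']=f_*[C]+\sum_i[R_i]$ in $\mathrm{CH}_1(Y)$. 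Since $\sum_i[R_i]\in R$, it suffices to prove $f''_{*}[C'']\in R$, i.e.\ we have reduced to a \emph{free} curve $C''$ of genus $g$ whose normal data is very positive.

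\textbf{Genus reduction.} Using this positivity, I would degenerate $C''$, inside a rational one-parameter family of $1$-cycles on $Y$, to a one-nodal irreducible curve with a non-separating node, i.e.\ whose normalization is a smooth projective curve $A$ of genus $g-1$ carrying a morphism $g_0\colon A\to Y$; since the node is a single point it is invisible to pushforward of cycles, so $(g_0)_*[A]=f''_{*}[C'']$ in $\mathrm{CH}_1(Y)$ (equivalently, one ``pinches a loop'' of $C''$ into a rational bridge). By the inductive hypothesis $(g_0)_*[A]\in R$, and then $f_*[C]=f''_{*}[C'']-\sum_i[R_i]=(g_0)_*[A]-\sum_i[R_i]\in R$, completing the induction.

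\textbf{The main obstacle.} The real work lies in the genus-reduction step, and precisely in controlling \emph{rational} (not merely algebraic) equivalence while carrying out the degeneration: the pinch must take place along a chain of rational curves in the pertinent parameter space, so that the $1$-cycle class is genuinely constant in $\mathrm{CH}_1(Y)$ rather than only in the group of numerical/algebraic classes. This forces the relevant parameter space (a suitable piece of the space of stable maps, allowing the source to degenerate) to be rationally connected, which is \emph{not} automatic — for large $g$ the full space $\overline M_g(Y,\beta)$ dominates $\overline M_g$ and is far from rationally connected. This is exactly why one first passes to the free curve $C''$: once enough very positive rational curves have been grafted on and reabsorbed, the deformation space one works in becomes swept out by rational curves through the relevant point and its boundary meets the locus of curves acquiring a non-separating node. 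Establishing this precisely — together with the analogous rational-equivalence bookkeeping already used for the comb smoothing — is the technical heart of the argument, and is what is carried out in \cite{Tian-Zong}.
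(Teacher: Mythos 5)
This statement is not proved in the paper at all: it is quoted verbatim, with attribution, as \cite[Theorem 1.3]{Tian-Zong}, and the paper only uses it as a black box. So the only meaningful question is whether your sketch constitutes an independent proof, and it does not: it has a genuine gap exactly where you yourself locate "the technical heart". Your reduction to smooth curves and the comb construction are fine in outline (and do follow the general strategy of Tian and Zong), but both places where rational equivalence is actually produced are unproved. First, in the comb-smoothing step, "smoothing all the nodes along a line through $[f']$ in the (smooth) deformation space" is not an argument: the relevant Hom-scheme or space of stable maps is merely a quasi-projective scheme that is smooth at $[f']$, it carries no linear structure, and a general one-parameter smoothing of the comb is parametrized by a smooth curve germ that has no reason to be rational. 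To conclude $f''_*[C'']=f_*[C]+\sum_i[R_i]$ in $\mathrm{CH}_1(Y)$ (rather than modulo algebraic equivalence) one must exhibit the smoothing over a rational base, e.g.\ an explicit family over $\mathbb{A}^1$ whose special fibre is the comb; this requires a construction, not just smoothness of the deformation space (Koll\'ar's smoothing results in \cite{rat-cur-kol} are stated over an arbitrary pointed curve).

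Second, and more seriously, the genus-reduction step is simply asserted: you "would degenerate $C''$, inside a rational one-parameter family of $1$-cycles on $Y$, to a one-nodal irreducible curve with a non-separating node". The existence of such a degeneration over a rational base, for a curve made sufficiently free by grafting very free teeth, is precisely the main technical theorem of \cite{Tian-Zong}; invoking it (as your last paragraph explicitly does) makes the argument circular as a proof of the statement. Positivity of $f''^*T_Y$ by itself gives deformations and, via bend-and-break, degenerations up to algebraic equivalence, but it does not by itself produce the required rational curve in the parameter space passing through $[C'']$ and through the nodal locus; bridging that gap is the content of Tian--Zong's argument and is missing here. So the sketch correctly identifies the shape of the known proof but does not supply its essential steps.
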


\begin{corollaire}\label{cor_csq1} When $char(k)=0$ and $X$ is a smooth cubic hypersurface of dimension $\geq 5$, $F(X)$ is separably rationally connected; then Proposition \ref{prop_gen_5} together with Theorem \ref{thm1_tian_zong} yields that $\mathrm{CH}_2(X)$ is generated by classes of rational surfaces. In positive characteristic, the same is true for smooth cubic hypersurfaces $X$ whose variety of lines $F(X)$ is separably rationally connected.
\end{corollaire}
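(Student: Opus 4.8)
The plan is to combine the surjectivity of $P_*$ modulo $\mathbb Z\cdot H_X^{n-2}$ coming from Proposition \ref{prop_gen_5} with the Tian--Zong generation result (Theorem \ref{thm1_tian_zong}) applied to $F(X)$, and then to observe that both the class $H_X^{n-2}$ and the $P_*$-image of a rational curve of $F(X)$ are classes of rational surfaces lying on $X$.

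First I would record the geometry of $F(X)$: for a smooth cubic $n$-fold $X$ with $n\geq 5$, $F(X)$ is smooth and connected of dimension $2n-4$ by Altman--Kleiman, and it is a Fano variety since its anticanonical class is $\bigl(n+2-\tfrac{d(d+1)}{2}\bigr)=(n-4)>0$ times the Pl\"ucker class. In characteristic $0$ a smooth Fano variety is rationally connected, hence separably rationally connected; in positive characteristic this separable rational connectedness is exactly the hypothesis of the statement (and we keep $\mathrm{char}(k)\neq 2$ so that Proposition \ref{prop_gen_5} applies). Then I would apply Proposition \ref{prop_gen_5} with $i=2$ and $r=2$: the conditions $1\leq i\leq r$, $2i\neq n$ and $n\geq 2^{r}+1=5$ hold, resolution of singularities of surfaces is available in every characteristic, and $X$ contains a $2$-plane (a smooth cubic hypersurface of dimension $\geq 5$ always does). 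This yields, for every $\Gamma\in\mathrm{CH}_2(X)$, a decomposition
$$\Gamma = m\,H_X^{n-2} + P_*(\gamma),\qquad m\in\mathbb Z,\ \gamma\in\mathrm{CH}_1(F(X)).$$

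It remains to exhibit both terms on the right as integral combinations of classes of rational surfaces. The first is a multiple of the class of a general linear section $X\cap L$ with $L\cong\mathbb P^{4}$, i.e. of a smooth cubic surface, which is rational. For the second, Theorem \ref{thm1_tian_zong} lets me write $\gamma$ as a $\mathbb Z$-linear combination of classes of rational curves $C\subset F(X)$, so it suffices to treat $P_*[C]=q_*p^*[C]=q_*[p^{-1}(C)]$ for such a $C$. Since $p\colon P\to F(X)$ is a $\mathbb P^1$-bundle, $p^{-1}(C)$ is an irreducible surface and, after pulling the bundle back along the normalization $\mathbb P^1\to C$, is dominated by a Hirzebruch surface. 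Because $C$ is a genuine curve in $F(X)$, the lines it parametrizes cannot all coincide, so their union $\Sigma_C:=q(p^{-1}(C))\subset X$ is two-dimensional (a one-dimensional subvariety containing a line is that line, which would force $C$ to be a point); hence $q|_{p^{-1}(C)}$ is generically finite onto $\Sigma_C$ and $P_*[C]$ is a positive multiple of $[\Sigma_C]$. Being dominated by a Hirzebruch surface, $\Sigma_C$ is unirational, hence rational by Castelnuovo's criterion, which is valid in arbitrary characteristic. Therefore $\mathrm{CH}_2(X)$ is generated by the class of a cubic surface section together with the classes $[\Sigma_C]$, all rational surfaces, as claimed.

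Most of the work is bookkeeping; the only genuine verifications are that $P_*$ of a rational curve is, up to a positive multiple, the class of a rational surface on $X$ --- which amounts to ruling out that $q$ contracts $p^{-1}(C)$, done above --- and, for the positive characteristic case, that rationality of a unirational surface (Castelnuovo) is available outside characteristic $0$, which it is. As an alternative to invoking that a smooth cubic $n$-fold with $n\geq 5$ contains a plane, one may instead use Proposition \ref{prop_ruled_hyperpl} to absorb the term $m\,H_X^{n-2}$ directly into $\mathrm{Im}(P_*)$; the conclusion is unchanged.
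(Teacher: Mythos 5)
Your characteristic-zero argument is correct and is essentially the proof the paper intends: check that $F(X)$ is a smooth connected Fano variety (index $n-4>0$), hence separably rationally connected in characteristic $0$; apply Theorem \ref{prop_gen_5} with $i=r=2$ (resolution of surfaces is available, $2i\neq n$ since $n\geq 5$, and a smooth cubic of dimension $\geq 5$ does contain a plane, as the paper also implicitly uses); dispose of the term $m\,H_X^{n-2}$ either as the class of a linear section or via Proposition \ref{prop_ruled_hyperpl}; and then use Theorem \ref{thm1_tian_zong} to reduce to $P_*[C]$ for $C$ a rational curve, which is an integer multiple of the class of the surface $\Sigma_C=q(p^{-1}(C))$ swept out by the corresponding lines, a surface dominated by the Hirzebruch surface $\mathbb P(\nu^*\mathcal E_2|_C)$ and hence rational when ${\rm char}(k)=0$. (Two small slips: the linear section representing $H_X^{n-2}$ is $X\cap\mathbb P^3$, a cubic surface in $\mathbb P^3$, not a section by a $\mathbb P^4$; and your non-contraction argument, while correct, is not even needed, since a contracted $P_*[C]$ would be zero and harmless.)

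The genuine gap is in your positive-characteristic justification: it is false that a unirational surface is rational in characteristic $p>0$. Castelnuovo's criterion in the form $b_1=P_2=0\Rightarrow$ rational does hold in all characteristics (Zariski), but an \emph{inseparable} dominant map from a rational surface does not force these invariants to vanish; Shioda's unirational supersingular K3 surfaces and Zariski surfaces are standard counterexamples. What is true is that a \emph{separably} unirational surface is rational. So your step ``$\Sigma_C$ is unirational, hence rational by Castelnuovo, valid in arbitrary characteristic'' only works if you know that the generically finite map from the Hirzebruch surface onto $\Sigma_C$ is separable (e.g.\ birational onto its image), which you have not argued and which is exactly the delicate point in characteristic $p$; the paper gives no further detail here either, but your proposed justification, as written, does not close it. To repair the positive-characteristic case you must either prove separability of $p^{-1}(C)\rightarrow\Sigma_C$ for the curves $C$ produced by Tian--Zong, or settle for the weaker formulation that $\mathrm{CH}_2(X)$ is generated by pushforwards $f_*[S]$ of fundamental classes of rational surfaces $S$ mapping to $X$. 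In characteristic $0$ your proof stands as is and coincides with the paper's route.
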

\begin{remarque} \normalfont When $k=\mathbb C$ and $X$ is a smooth cubic hypersurface of dimension $5$, the group of $1$-cycles modulo algeraic equivalence, $\mathrm{CH}_1(F(X))/alg$ is finitely generated. Indeed, according to \cite[Theorem 5.7, Chap. II]{rat-cur-kol}, any rational curve is algebraically equivalent to a sum of rational curves of anticanonical degree at most $dim_k(F(X))+1$. As there are finitely many irreducible varieties parametrizing rational curves of bounded degree, $\mathrm{CH}_1(F(X))/alg$, is finitely generated. So, by the surjectivity of $P_*$, $\mathrm{CH}_2(X)/alg$ is finitely generated. So $H_{nr}^4(X,\mathbb Q/\mathbb Z)\simeq \mathrm{CH}_2(X)_{tors, AJ}/alg\subset \mathrm{CH}_2(X)/alg$ is finitely generated and being a functorial birational invariant of a cubic hypersurface, $2$-torsion. So by this geometric method, we are just able to prove the finiteness of the group $H_{nr}^4(X,\mathbb Q/\mathbb Z)$. By more algebraic methods, Kahn and Sujatha (\cite{KS}) prove the vanishing of that group.
\end{remarque}
%\begin{proof} (ii) According to \cite[Theorem 5.7]{rat-cur-kol}, any rational curve is algebraically equivalent to a sum of rational curves of anticanonical degree at most $dim_k(F(X))+1$. As there are finitely many irreducible varieties parametrizing rational curves of bounded degree, $\mathrm{CH}_1(F(X))/alg$, is finitely generated. Proposition \ref{prop_ruled_hyperpl} tells us that $P_*:\mathrm{CH}_1(F(X))/alg\rightarrow \mathrm{CH}_2(X)/alg$ is surjective so that $\mathrm{CH}_2(X)/alg$ is finitely generated.
% Now, by work of Voisin (\cite{vois_d4_unr}) $H_{nr}^4(X,\mathbb Q/\mathbb Z)\simeq \mathrm{CH}_2(X)_{tors, AJ}/alg\subset \mathrm{CH}_2(X)/alg$ so it is finitely generated. On the other hand, we know that, since $X$ admits a unirational parametrization of degree $2$, $H_{nr}^4(X,\mathbb Q/\mathbb Z)$ is a $2$-torsion group so it is a finitely generated $\mathbb Z/2\mathbb Z$-vector space hence $H_{nr}^4(X,\mathbb Q/\mathbb Z)$ is finite.
%\end{proof}
Actually, by completely different methods using a variant of \cite[Theorem 18]{vois_4fold_ihc}, the first item of Corollary \ref{cor_csq1} turns out to be true for cubic $4$-folds also in characteristic $0$.
\begin{proposition}\label{provoisin} Let $X\subset \mathbb P^5_\mathbb C$ be a smooth cubic hypersurface. Then $\mathrm{CH}_2(X)$ is generated by classes of rational surfaces.
\end{proposition}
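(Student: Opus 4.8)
The plan is to collapse $\mathrm{CH}_2(X)$ onto its image in $H^4(X,\mathbb Z)$, and then to reduce the statement to the assertion that $\mathrm{Hdg}^4(X,\mathbb Z)$ is generated by classes of rational surfaces, which is exactly where a variant of \cite[Theorem~18]{vois_4fold_ihc} enters.

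\textbf{Step 1: vanishing of $\mathrm{CH}_2(X)_{hom}$.} First I would show $\mathrm{CH}_2(X)_{hom}=0$. Being a smooth Fano fourfold, $X$ is rationally connected, so $\mathrm{CH}_0(X)=\mathbb Z$; moreover $H^3(X,\mathbb C)=0$, hence $J^3(X)=0$. By the Bloch--Srinivas theorem the triviality of $\mathrm{CH}_0(X)$ forces $\mathrm{CH}^2(X)_{hom,\mathbb Q}$ to be representable and governed by $H^3(X)$, so it vanishes; thus $\mathrm{CH}^2(X)_{hom}$ is torsion. Since $H^\ast(X,\mathbb Z)$ is torsion free, every torsion codimension $2$ cycle is homologically trivial, and $J^3(X)=0$ means it is also annihilated by the Abel--Jacobi map; hence $\mathrm{CH}^2(X)_{tors}=\mathrm{CH}^2(X)_{tors,AJ}=0$ by the theorem of Bloch recalled in the introduction. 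So $\mathrm{CH}_2(X)_{hom}=0$ and the cycle class map is an injection $\mathrm{CH}_2(X)=\mathrm{CH}^2(X)\hookrightarrow \mathrm{Hdg}^4(X,\mathbb Z)$.

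\textbf{Step 2: reduction.} It then suffices to prove that $\mathrm{Hdg}^4(X,\mathbb Z)$ is generated by classes $[\Sigma]$ of rational surfaces $\Sigma\subset X$. Indeed, the subgroup of $\mathrm{CH}_2(X)$ generated by rational surfaces maps, via the injective cycle class map, onto the subgroup of $\mathrm{Hdg}^4(X,\mathbb Z)$ generated by the corresponding classes; if that subgroup is all of $\mathrm{Hdg}^4(X,\mathbb Z)$ then it contains the image of $\mathrm{CH}_2(X)$, whence equality, and injectivity forces the rational surfaces to generate $\mathrm{CH}_2(X)$. (As a byproduct this also recovers Voisin's integral Hodge conjecture for $X$.) The class $h_X^2$ is represented by a smooth linear section $X\cap\mathbb P^3$, a smooth cubic surface, which is rational, so one is reduced to realizing the primitive part $\mathrm{Hdg}^4(X,\mathbb Z)_{prim}$ by $\mathbb Z$-combinations of rational surfaces. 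Here I would run the variant of \cite[Theorem~18]{vois_4fold_ihc}: degenerate $X$, inside the family of smooth cubic fourfolds, to one containing a plane — so that projection from the plane presents it as birational to a quadric surface bundle over $\mathbb P^2$, whose fibres and the surfaces lying over lines of $\mathbb P^2$ are rational — and combine this with the fact that the specialization map on $H^4$ preserves Hodge classes and that Hodge classes propagate in families along the locus where the period stays of the prescribed type, in order to transport a generation statement from the quadric-bundle model back to $X$.

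\textbf{Main obstacle.} The hard point is precisely this last one. The surfaces that naturally carry primitive degree $4$ Hodge classes — the cylinders $q(p^{-1}(\Gamma))$ over curves $\Gamma\subset F(X)$, to which the Beauville--Donagi isomorphism $\mathrm{Hdg}^4(X,\mathbb Z)_{prim}\simeq \mathrm{NS}(F(X))$ points — are ruled over curves $\Gamma$ which are not rational in general, and there is no reason for $F(X)$, a hyperkähler fourfold, to have $\mathrm{CH}_1$ generated by rational curves, so Tian--Zong does not help. Thus no direct construction is available and one is pushed into the degeneration/specialization argument; making that argument output genuinely rational surfaces (and over $\mathbb Z$, not merely $\mathbb Q$, which is why one wants models carrying degree-one rational sections of the ruling, and why one must still control the contributions sitting over the discriminant locus) is the delicate technical core.
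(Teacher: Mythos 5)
Your Step 1 and the reduction in Step 2 are sound and coincide with half of the paper's argument: since $X$ is rationally connected and $J^3(X)=0$, Bloch--Srinivas (\cite[Theorem 1]{Bl-Sr}) gives $\mathrm{CH}_2(X)_{hom}=0$, so everything hinges on showing that every class in $\mathrm{Hdg}^4(X,\mathbb Z)$ is the class of a $\mathbb Z$-combination of rational surfaces. But that is precisely the step you do not prove, and the route you sketch for it --- degenerate to a cubic fourfold containing a plane, use the quadric-bundle model, and ``transport a generation statement'' back to $X$ --- has no mechanism behind it: integral Hodge classes do not propagate in families (the rank of $\mathrm{Hdg}^4$ jumps only on countable unions of Noether--Lefschetz-type loci, and a class on the special fibre need not extend, nor does a class on $X$ specialize to something you can control on the special fibre together with its representation by rational surfaces). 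You acknowledge this yourself in the ``main obstacle'' paragraph, so the proposal is an honest reduction with the core assertion left open.

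The paper closes exactly this gap by a different and genuinely workable mechanism, working on $X$ itself rather than degenerating it: one runs Voisin's proof of the integral Hodge conjecture for cubic fourfolds (\cite[Theorem 18]{vois_4fold_ihc}), which uses a Lefschetz pencil of cubic threefold sections $X_t$ and the normal function attached to a degree $4$ Hodge class, and replaces the parametrization of the intermediate Jacobian fibration $J^3(X_t)$ with rationally connected fibres given by elliptic quintics (\cite{Mar-Tik}, \cite{Iliev-Mar}) by the family of rational quartic curves of \cite[Theorem 9.2]{HRS}, whose Abel--Jacobi map to $J^3(X_t)$ also has rationally connected fibres. Lifting the normal function through this parametrization produces, for any Hodge class, a cycle which is a combination of surfaces swept out by rational curves of degree $4$ parametrized by rational curves; such surfaces are rational, and then $\mathrm{CH}_2(X)_{hom}=0$ upgrades homological to rational equivalence. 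Your concern about $F(X)$ being hyperk\"ahler (so that Tian--Zong is unavailable) is well taken, but the cylinder construction over curves in $F(X)$ plays no role here: the rational surfaces come from the HRS family of rational quartics, not from ruled surfaces over curves of $F(X)$. To complete your argument you would need to substitute this normal-function construction (or an equivalent one) for the degeneration step.
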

\begin{proof} In the  proof by Voisin of the integral Hodge conjecture of cubic $4$-folds (\cite[Theorem 18]{vois_4fold_ihc}), one can replace the parametrization of the family of intermediate jacobians associated to a Lefschetz pencil of $X$, with rationally connected fibers given by \cite{Mar-Tik} and \cite{Iliev-Mar} the family of elliptic curves
of degree $5$) by the one given by \cite[Theorem 9.2]{HRS} (the family of rational curves of degree $4$); her proof then shows that any degree $4$ Hodge class is homologically equivalent to the class of a combination of rational surfaces
swept-out by
a family of rational curves of degree $4$ in $X$
parameterized by a rational curve. Finally, since $X$ is rationally connected and the intermediate jacobian $J^3(X)$ is trivial,  Bloch-Srinivas \cite[Theorem 1]{Bl-Sr} applies and says that codimension $2$ cycles homologically trivial
 on $X$ are rationally trivial so that  we have proved that any $2$-cycle on $X$ is rationally equivalent to a combination of rational surfaces.
\end{proof}

\subsection{One-cycles modulo algebraic equivalence} In this section, we apply the methods
 of \cite[Theorem 6.2]{Tian-Zong}, using a coarse parametrization of rational curves lying on $F(X)$, to study
 $1$-cycles on varieties $F(X)$. Our goal is to prove:

\begin{theoreme}\label{thm_lines_alg_eq} Let $X\subset \mathbb P^{n+1}_k$ be a smooth hypersurface of degree $d$ over an algebraically closed field of characteristic $0$, with $\frac{d(d+1)}{2}<n$, such that $F(X)$ is smooth, connected. Then every rational curve on $F(X)$ is algebraically equivalent to an integral sum of lines. In particular, %(assuming moreover that $F(X)$ is separably rationally connected if $char(k)>0$), 
any $1$-cycle on $F(X)$ is algebraically equivalent to an integral sum of lines and thus ${\rm CH}_1(F(X))_{hom}={\rm CH}_1(F(X))_{alg}$.
\end{theoreme}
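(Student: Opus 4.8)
The plan is to reduce, via Theorem \ref{thm1_tian_zong}, to a statement about rational curves, and then to run an induction on the degree by degenerating rational curves on $F(X)$ in the spirit of Tian--Zong, the room needed for the degenerations being supplied by the hypothesis $\tfrac{d(d+1)}{2}<n$. The adjunction formula recalled above gives $-K_{F(X)}=\left(n+2-\tfrac{d(d+1)}{2}\right)\mathcal O_{F(X)}(1)$, so the hypothesis says exactly that $F(X)$ is a smooth Fano variety of index $\geq 3$; in characteristic $0$ it is separably rationally connected, and Theorem \ref{thm1_tian_zong} reduces us to showing that every rational curve $R\subset F(X)$ is algebraically equivalent to an integral sum of lines of $F(X)$, i.e. of Pl\"ucker-degree-$1$ rational curves, which are exactly the pencils of lines of $X$ based at a point and spanning a plane $\mathbb P^2\subset X$. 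Granting this, the equality $\mathrm{CH}_1(F(X))_{hom}=\mathrm{CH}_1(F(X))_{alg}$ follows formally: a homologically trivial $1$-cycle becomes an integral combination of lines, all numerically equivalent and of positive degree, hence a combination of differences of homologous lines, which are algebraically trivial since the lines of $F(X)$ form a connected family (as do the planes $\mathbb P^2\subset X$, by the index bound).

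I would prove the reduced statement by induction on the Pl\"ucker degree $e$ of $R$, the case $e=1$ being vacuous. Two geometric facts, both forced by $\tfrac{d(d+1)}{2}<n$, drive the argument: (i) every line $\ell_0\subset X$ lies in a plane $\mathbb P^2\subset X$ --- the planes through $\ell_0$ that are contained in $X$ are cut out by $\tfrac{d(d+1)}{2}$ conditions inside the $\mathbb P^{n-1}$ of all planes through $\ell_0$ --- so the lines of $F(X)$ cover $F(X)$; and (ii) the index being $\geq 3$, a general line of $F(X)$ is free and the free lines of $F(X)$ through a general point move in a family of dimension $\geq 1$. Given an arbitrary rational curve $R$, I would first attach to it general free lines so as to form a comb on $F(X)$ whose general smoothing $R'$ is a very free rational curve; since $R'$, the comb and its teeth all lie in a single flat family, one gets $[R]=[R']-\sum[\text{teeth}]$ in $\mathrm{CH}_1(F(X))_{alg}$, the teeth being lines. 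This reduces the problem to showing that a very free rational curve $R'$ of degree $e'\geq 2$ is algebraically equivalent to an integral combination of rational curves of strictly smaller degree, which, with the inductive hypothesis applied to those curves and to the teeth, closes the induction.

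For that last point I would use the ``coarse parametrization'' of rational curves on $F(X)$: a degree-$e'$ rational curve on $F(X)\subset G(2,n+2)$ is the same as a morphism $\mathbb P^1\to G(2,n+2)$ landing in $F(X)$, equivalently a surface in $X$ swept out by the corresponding family of lines of $X$; one estimates the dimension of the space of such morphisms and shows, using $\tfrac{d(d+1)}{2}<n$, that it has the expected dimension with its boundary --- reducible source curves, i.e. chains of lower-degree rational curves still landing in $F(X)$ --- lying in the closure of the locus of irreducible curves. Then our very free $R'$ specializes, inside this space, to a connected union of rational curves of degree $<e'$, and flatness gives $[R']\equiv_{alg}\sum_i m_i[R_i']$ with $\deg R_i'<e'$. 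The main obstacle is precisely this parameter-space analysis: controlling the space of rational curves on $F(X)$ finely enough to guarantee that the degenerations remain inside $F(X)$ and genuinely drop the degree, so that iterating terminates at lines rather than at some other family of minimal rational curves of $F(X)$ --- this is where $\tfrac{d(d+1)}{2}<n$ is indispensable (for the cubic threefold, of index $2$, the analogous statement fails by Clemens's theorem on the Griffiths group), and one may be forced to interleave the comb-and-smooth and degeneration steps and to induct on the anticanonical rather than the Pl\"ucker degree.
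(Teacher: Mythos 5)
Your overall strategy (induct on the degree, degenerate a degree $e\geq 2$ rational curve on $F(X)$ into lower-degree rational curves, feed the hypothesis $\tfrac{d(d+1)}{2}<n$ into a parameter-space argument, and use Theorem \ref{thm1_tian_zong} for the ``in particular'' part) is the right skeleton, but the decisive step is missing, and you say so yourself: you do not have an argument that a rational curve of degree $e\geq 2$ on $F(X)$ can actually be specialized, \emph{inside} $F(X)$, to a connected union of rational curves of strictly smaller degree. Your proposed substitute --- an ``expected dimension'' computation for the space of degree-$e$ morphisms $\mathbb P^1\to F(X)$ together with the assertion that the boundary of reducible curves lies in the closure of the irreducible locus --- is not something one knows how to prove in this generality (it would essentially require irreducibility of the space of degree-$e$ rational curves on $F(X)$ for every $e$), and no dimension count by itself produces the required degeneration of the \emph{given} curve. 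The paper's mechanism is different and more elementary: a degree-$e$ map to $G(2,n+2)$ is encoded by a point of $\mathbb P=\mathbb P\bigl(Hom(V^*,H^0(\mathcal O_{\mathbb P^1}(a))\oplus H^0(\mathcal O_{\mathbb P^1}(b)))\bigr)$ with $a+b=e$; the condition that the swept-out curve lies in $X$ is cut out by exactly $d+1+e\tfrac{d(d+1)}{2}$ explicit equations of degree $d$, and the degenerate locus $B$ (matrices of rank $\leq 2$) has dimension $2(e+n)+3$. Hartshorne's connectedness theorem (Lemma \ref{lem_hart_connected}) then gives that $\mathbb P_X\setminus(\mathbb P_X\cap B)$ is \emph{connected} as soon as $e\bigl(n-\tfrac{d(d+1)}{2}\bigr)>1$, i.e.\ for every $e\geq 2$ under the hypothesis. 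One then joins the point parametrizing the given curve, by a curve inside $\mathbb P_X\setminus B$, to a point of the locus $D$ of tuples with a common nonconstant factor; there the map factors through a map of degree $<e$, and a bend-and-break/limit argument along the connecting curve yields the algebraic equivalence with a sum of rational curves of degree $<e$. It is this connectedness statement, not a dimension count, that the hypothesis $\tfrac{d(d+1)}{2}<n$ is feeding; without it (or an equivalent irreducibility input) your induction step does not close.

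Two smaller remarks. The comb-and-smoothing preliminary (attaching free lines to make $R$ very free) is not needed for this theorem and does not help with the gap above; it only adds the burden of knowing that lines of $F(X)$ cover $F(X)$ and are free, which the paper's proof never uses. Second, your deduction of ${\rm CH}_1(F(X))_{hom}={\rm CH}_1(F(X))_{alg}$ quietly uses that any two lines of $F(X)$ are algebraically equivalent, which you justify by connectedness of the family of planes $\mathbb P^2\subset X$; that is an extra geometric input (connectedness of $F_2(X)$) which you should either prove under the stated bound or replace by an argument using only ${\rm Pic}(F(X))\simeq\mathbb Z$ and the degeneration statement itself.
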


 We start with some preparation. Let $V$ be a $(n+2)$-dimensional $k$-vector space and $X\subset \mathbb P(V)\simeq \mathbb P^{n+1}_k$ a smooth hypersurface of degree $d$. A morphism $r:\mathbb P^1\rightarrow G(2,V)$ such that $r^*\mathcal O_{G(2,V)}(1)\simeq \mathcal O_{\mathbb P^1}(e)$, with $e\geq 1$, is associated to the datum of a globally generated rank $2$ vector bundle on $\mathbb P^1$, which is a quotient of the trivial bundle $V\otimes \mathcal O_{\mathbb P^1}$ i.e. to an exact sequence $$V\otimes \mathcal O_{\mathbb P^1}\rightarrow \mathcal O_{\mathbb P^1}(a)\oplus\mathcal O_{\mathbb P^1}(b)\rightarrow 0$$ with $a,b\geq 0$ and $a+b=e$. So a natural parameter space for those morphisms is $$\mathbb P:=\mathbb P(Hom(V^*,H^0(\mathbb P^1, \mathcal O_{\mathbb P^1}(a))\oplus H^0(\mathbb P^1,\mathcal O_{\mathbb P^1}(b)))).$$
Given $[P_0,\dots,P_{n+1},Q_0,\dots,Q_{n+1}]\in \mathbb P$, where the $P_i$'s are in $H^0(\mathbb P^1, \mathcal O_{\mathbb P^1}(a))$ and the $Q_i$'s are in $H^0(\mathbb P^1,\mathcal O_{\mathbb P^1}(b))$, the points in the image in $\mathbb P(V)$ of $Im(\mathbb P^1\rightarrow G(2,n+2))$ under the correspondence given by the universal $\mathbb P^1$-bundle are of the form $[P_0(Y_0,Y_1)\lambda+Q_0(Y_0,Y_1)\mu,\dots, P_{n+1}(Y_0,Y_1)\lambda+Q_{n+1}(Y_0,Y_1)\mu]$ where ${\rm Span}(Y_0,Y_1)=H^0(\mathbb P^1,\mathcal O_{\mathbb P^1}(1))$. Let $\Pi_{i=0}^{n+1} X_i^{\alpha_i}\in H^0(\mathbb P^{n+1},\mathcal O_{\mathbb P^{n+1}}(d))$ be a mononial with $\sum_{i=0}^{n+1}\alpha_i=d$. Then the induced equation on the image in $\mathbb P^{n+1}$ of the morphism $\mathbb P^1\rightarrow G(2,n+2)$ associated to $[P_0,\dots, P_{n+1},Q_0,\dots,Q_{n+1}]$ has the following form: $$\sum_{k=0}^d (\sum_{\substack{0\leq l_0\leq \alpha_0,\dots, 0\leq l_{n+1}\leq \alpha_{n+1} \\ \sum_i l_i=k}}\Pi_{i=0}^{n+1}\binom{\alpha_i}{l_i}P_i^{\alpha_i-l_i}Q_i^{l_i}) \lambda^{d-k}\mu^k$$ so that, denoting $\mathbb P_X$, the closed subset of $\mathbb P$ parametrizing the $[P_0,\dots, P_{n+1},Q_0,\dots,Q_{n+1}]$ whose image in $\mathbb P^{n+1}$ is contained in the hypersurface $X$ of degree $d$, $\mathbb P_X$ is defined by $\sum_{k=0}^d (a(d-k)+bk+1)$ homogeneous polynomials of degree $d$ on $\mathbb P$.\\
\indent The closed subset $B\subset \mathbb P$ parametrizing the $M\in \mathbb P(Hom(V^*,H^0(\mathbb P^1, \mathcal O_{\mathbb P^1}(a))\oplus H^0(\mathbb P^1,\mathcal O_{\mathbb P^1}(b))))$ whose rank is $\leq 2$ has dimension $2(e+n)+3$. Now, we have the following lemma:
\begin{lemme}\label{lem_hart_connected}\textit{(\cite{Hart_conn}).} Let $Y$ be a subscheme of a projective space $\mathbb P^N$ defined by $M$ homogeneous polynomials. Let $Z$ be a closed subset of $Y$ with dimension $< N-M-1$. Then $Y\backslash Z$ is connected.
\end{lemme}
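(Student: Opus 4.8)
\emph{The final statement is the connectedness lemma of Hartshorne.} The plan is to deduce it from the classical connectedness theorem of Grothendieck and Hartshorne, organized around the notion of connectedness in a given dimension. Recall that a Noetherian scheme $W$ is \emph{connected in dimension $\ge c$} if $\dim W\ge c$ and $W\setminus T$ is connected for every closed $T\subset W$ with $\dim T<c$. In this language the assertion to be proved is exactly that $Y=V(f_1,\dots,f_M)\subset\mathbb P^N$ is connected in dimension $\ge N-M$: the hypothesis $\dim Z<N-M-1$ gives in particular $\dim Z<N-M$, so such a $Y$ has $Y\setminus Z$ connected.

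First I would pass to the affine cone. Let $R=k[x_0,\dots,x_N]$, let $I=(f_1,\dots,f_M)$ be the homogeneous ideal generated by the given forms, set $A=R/I$, and let $\widetilde Y=\operatorname{Spec}A\subset\mathbb A^{N+1}$ be the cone over $Y$ with vertex the origin $o$. Away from $o$ the projection $\widetilde Y\setminus\{o\}\to Y$ is a $\mathbb G_m$-torsor, so connectedness questions on $Y$ (after deleting a closed $Z$) are equivalent to the corresponding questions on the punctured cone (after deleting the cone $\widetilde Z$ over $Z$ together with $o$), and $\dim(\widetilde Z\cup\{o\})=\dim Z+1$. It therefore suffices to prove that $\widetilde Y$ is connected in dimension $\ge N+1-M$; then deleting any closed subset of dimension $<N+1-M$, in particular $\widetilde Z\cup\{o\}$ with $\dim(\widetilde Z\cup\{o\})=\dim Z+1<N-M<N+1-M$, leaves it connected.

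The cone statement is then proved by peeling off the generators one at a time. The ambient $\mathbb A^{N+1}$ is regular and irreducible, hence connected in dimension $\ge N+1$. The crucial input is the single-hypersurface connectedness theorem: if a complete local (or graded) ring is connected in dimension $\ge c$, then cutting by a single element drops the connectedness dimension by at most one, i.e.\ $A/(g)$ is connected in dimension $\ge c-1$. Applying this $M$ times, to $f_1,\dots,f_M$ in turn, shows that $A$ is connected in dimension $\ge(N+1)-M$, as required (completing at the vertex, or working with the graded structure, does not affect the bounds).

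The main obstacle is precisely this single-cut step, which carries the genuine content. I would establish it through local cohomology: the decisive fact is that a principal ideal has cohomological dimension at most one, so the local cohomology functors supported along a hypersurface vanish above degree one, and this is exactly what prevents a hypersurface section from dropping the connectedness dimension by more than one. Concretely, a splitting of the punctured spectrum of $A/(g)$ after deleting a closed set of dimension $<c-1$ would produce complementary closed subschemes $V_1,V_2$ with $V_1\cup V_2=\operatorname{Spec}(A/(g))$ and $\dim(V_1\cap V_2)<c-1$; feeding the ideals of $V_1$ and $V_2$ into the Mayer--Vietoris sequence for local cohomology and invoking the cohomological dimension bound yields a vanishing incompatible with both $V_1$ and $V_2$ being nontrivial, a contradiction. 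Alternatively one may run the equivalent projective induction built on the assertion that intersecting a scheme connected in dimension $\ge t$ with a hypersurface yields one connected in dimension $\ge t-1$, whose proof rests on the same cohomological input, or simply invoke the theorem of Hartshorne and Grothendieck directly (\cite{Hart_conn}), as it is quoted in the lemma.
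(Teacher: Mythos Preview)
The paper does not give its own proof of this lemma; it is simply quoted from \cite{Hart_conn}. Your strategy---pass to the affine cone, complete at the vertex, and induct on the number of defining equations via a single-hypersurface connectedness step---is exactly Hartshorne's, so in spirit you are reproducing the cited source.

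There is, however, a genuine error. You assert that $Y$ is connected in dimension $\ge N-M$, but this is false already for $M=1$: if $Y=H_1\cup H_2\subset\mathbb P^N$ is a union of two hyperplanes, then $H_1\cap H_2$ has dimension $N-2<N-1=N-M$, yet removing it disconnects $Y$. On the cone side, $k[[x_0,\dots,x_N]]/(x_0x_1)$ is disconnected after removing $V(x_0,x_1)$ of dimension $N-1<N=(N+1)-M$, so your claim that a single cut drops the connectedness dimension by at most one fails as stated. The correct form of Hartshorne's inductive step carries the \emph{pair} of conditions (every irreducible component has dimension $\ge d+1$; any two components are linked by a chain with successive intersections of dimension $\ge d$), and one cut takes $d$ to $d-1$. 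For the regular local ring of dimension $N+1$ the strongest admissible starting value is $d=N$ (the clause $d+1\le N+1$ forces this), so after $M$ cuts one obtains $d=N-M$: the cone is connected in dimension $\ge N-M$, and hence $Y$ only in dimension $\ge N-M-1$---which is exactly the lemma, and no more. With this correction your argument goes through; as written it proves a statement that is too strong to be true.
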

The closed subset $B\cap \mathbb P_X$ of $\mathbb P_X$ has dimension $$dim_k(F(X))+2(a+1)+2(b+1)-1= 2n-d-1+2e+3=2(e+n)-d+2$$ since it parametrizes (generically) a point of $F(X)$ and over that point $2$ polynomials in $H^0(\mathbb P^1,\mathcal O_{\mathbb P^1}(a))$ and $2$ polynomials in $H^0(\mathbb P^1,\mathcal O_{\mathbb P^1}(b))$. Applying Lemma \ref{lem_hart_connected} with $Y=\mathbb P_X$ and $Z=B\cap \mathbb P_X$, so that $N=(n+2)(e+2)-1$ and $M=\sum_{k=0}^d (a(d-k)+bk+1)=d+1+e\frac{d(d+1)}{2}$, yields the following condition for the connectedness of $\mathbb P_X\backslash (B\cap \mathbb P_X)$: \begin{equation}\label{ineq_connected}
e(n-\frac{d(d+1)}{2})>1
\end{equation}

\begin{proof}[Proof of Theorem \ref{thm_lines_alg_eq}] We proceed by induction on the degree of the considered rational curve, following the arguments of \cite[Theorem 6.2]{Tian-Zong}.\\
\indent Let $D\subset \mathbb P$ be the closed subset parametrizing $2(n+2)$-tuples $[P_0,\dots,P_{n+1},Q_0,\dots,Q_{n+1}]$ that have a common non constant factor. Assume $e\geq 2$. Let $p\in \mathbb P_X\backslash(\mathbb P_X\cap(B\cup D))$ be a point parametrizing a degree $e$ morphism $\mathbb P^1\rightarrow F(X)$ generically injective. As $e\geq 2$, $\mathbb P_X\backslash(\mathbb P_X\cap B)$ is connected; so there is a connected curve $\gamma$ in $\mathbb P_X\backslash(\mathbb P_X\cap B)$ connecting $p$ to a point $q=[P_{0,q},\dots,P_{n+1,q},Q_{0,q},\dots,Q_{n+1,q}]$ of $\mathbb P_X\cap D\backslash (\mathbb P_X\cap B)$. Factorizing out the common factor of $(P_{i,q}, Q_{i,q})_{i=0\dots n+1}$, we get a $(P'_{i,q}, Q'_{i,q})_{i=0\dots n+1}$ which parametrizes a morphism $\mathbb P^1\rightarrow F(X)$ of degree $<e$ (finite onto its image), since $q\notin B$. So, approching $q$ from points of $\gamma$ outside $D$ and using standard bend-and-break construction, we get from $q$ a morphism from a connected curve whose components are isomorphic to $\mathbb P^1$ to $F(X)$ such that the restriction to each component yields a rational curve of degree $<e$ (or a contraction). So the rational curve parametrized by $p$ is algebraically equivalent to a sum of rational curve each of which has degree $<e$. We conclude by induction on $e$ that the rational curve parametrized by $p$ is algebraically equivalent to a sum of lines.
\end{proof}

\subsection{One-cycles modulo rational equivalence} From now on, we will assume that $X\subset \mathbb P^{n+1}_k$ is a smooth hypersurface of degree $d>2$, with $d(d+1)/2 < n$, and that $char(k)=0$. The following is proved in
\cite[Proposition 6.2]{Deb-Man}:

\begin{proposition}\label{prop_chain_conn_lines} Assume $char(k)=0$ and $X\subset \mathbb P^{n+1}_k$ is a smooth hypersurface of degree $d>2$, with $d(d+1)/2 < n$. Then, $F(X)$ is chain connected by lines.
\end{proposition}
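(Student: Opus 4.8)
The plan is to understand the \emph{lines} lying on $F(X)\subset G(2,n+2)$ in terms of $2$-planes contained in $X$, and then to reduce the chain-connectedness of $F(X)$ to a connectedness statement about incidence varieties of linear subspaces of $X$, which is the kind of statement Lemma \ref{lem_hart_connected} is designed to handle. First I would recall that a line of the Grassmannian through $[m]$ is a pencil: it consists of the lines $L\subset\mathbb P^{n+1}$ with $x\in L\subset\Pi$ for a fixed point $x$ and a fixed $2$-plane $\Pi\ni x$. Such a pencil lies on $F(X)$ exactly when every line of the pencil lies on $X$, i.e. exactly when $\Pi\subset X$. Hence the lines on $F(X)$ are precisely the pencils of lines through a point $x$ of a plane $\Pi\cong\mathbb P^2$ contained in $X$, and the crucial consequence I will use is: \emph{any two lines $m,m'\subset X$ contained in one common plane $\Pi\subset X$ are joined by a single line of $F(X)$}, namely the pencil through $m\cap m'$ inside $\Pi$. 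In particular, for a fixed $\Pi\subset X$ the subvariety $F(\Pi)\cong(\mathbb P^2)^{\vee}$ of $F(X)$ is joined to any of its points by one line.

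Next I would show that every line $m\subset X$ lies in some plane contained in $X$. The planes $\Pi\supset m$ form a $\mathbb P^{n-1}$, and since $m\subset X$ the condition $\Pi\subset X$ is the vanishing on $\Pi$ of the residual degree-$(d-1)$ form of $\Pi\cap X$, i.e. $\binom{d+1}{2}=\tfrac{d(d+1)}{2}$ equations; as $\tfrac{d(d+1)}{2}<n$, the locus $F_m(X):=\{\Pi : m\subset\Pi\subset X\}$ is nonempty of dimension $\ge n-1-\tfrac{d(d+1)}{2}\ge 0$. Applying Lemma \ref{lem_hart_connected} to this locus, and to the incidence variety $\mathcal I=\{(m,\Pi): m\subset\Pi\subset X\}$ (which fibers over $F_2(X)$ in $(\mathbb P^2)^{\vee}$'s, $F_2(X)$ itself being the zero-locus of a section of a globally generated bundle on $G(3,n+2)$, connected in the given numerical range), one gets that $\mathcal I$ is connected and dominates $F(X)$. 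Granting this, two general lines $m_0,m_1\subset X$ are joined by a chain of lines of $F(X)$ as follows: pick $\Pi_0\supset m_0$ and $\Pi_N\supset m_1$ in $X$; if $\Pi_0,\dots,\Pi_N$ is a chain of planes in $X$ whose consecutive members meet along a line $n_i=\Pi_i\cap\Pi_{i+1}$ (automatically contained in $X$), then
\[
[m_0]\ -\ [n_0]\ -\ [n_1]\ -\ \cdots\ -\ [n_{N-1}]\ -\ [m_1]
\]
is a chain of lines of $F(X)$, each link being a pencil inside one of the $\Pi_i$. So everything reduces to the assertion that $F_2(X)$ is \emph{chain connected by incident planes}.

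That last reduction I would treat by running the same mechanism one dimension up — in the spirit of the connectedness input used in the proof of Theorem \ref{thm_lines_alg_eq} — or, more directly, by applying Lemma \ref{lem_hart_connected} to the flag incidence variety $\{(m,\Pi): m\subset\Pi\subset X\}\subset G(2,n+2)\times G(3,n+2)$: after imposing on it general Schubert conditions, one checks that the resulting bad loci have codimension strictly larger than required, so that the complement is connected; concretely the family $\{F_m(X)\}_{[m]\in F(X)}$ of planes through a line of $X$ is positive-dimensional and its members link up along lines of $X$, so connectedness propagates across $F_2(X)$, giving the needed chain $\Pi_0,\dots,\Pi_N$.

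The main obstacle is precisely this step: arranging the chain of planes so that \emph{consecutive} planes genuinely meet (along a line of $X$), i.e. proving that the ``incidence graph of planes in $X$'' is connected in a scheme-theoretic sense rather than merely that $F_2(X)$ is connected. This forces one to keep careful track of all the degenerate loci — non-transverse plane intersections, reducible members of the auxiliary families, the rank-drop locus $B$ of the previous subsection — and to verify that each of them has the codimension demanded by Lemma \ref{lem_hart_connected}. It is exactly in these codimension estimates that the strict inequality $\tfrac{d(d+1)}{2}<n$ (and not merely the Fano bound $\tfrac{d(d+1)}{2}<n+2$) is used in an essential way.
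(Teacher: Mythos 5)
The paper does not actually prove this proposition: it is quoted verbatim from \cite[Proposition 6.2]{Deb-Man}, so there is no internal argument to compare yours with. Judged on its own terms, your proposal contains correct and useful reductions — the identification of lines of $F(X)$ with pencils $\{L: x\in L\subset \Pi\}$ for a plane $\Pi\subset X$, the observation that two coplanar lines of $X$ are joined by a single line of $F(X)$, and the count showing that every line $m\subset X$ lies on some plane $\Pi\subset X$ because $F_m(X)\subset\mathbb P^{n-1}$ is cut out by $\binom{d+1}{2}=\tfrac{d(d+1)}{2}\leq n-1$ equations — but the heart of the matter is left unproved. Everything is reduced to the assertion that any two planes of $X$ can be linked by a finite chain $\Pi_0,\dots,\Pi_N$ of planes contained in $X$ with $\dim(\Pi_i\cap\Pi_{i+1})\geq 1$, and for this you only offer the sentence ``one checks that the resulting bad loci have codimension strictly larger than required'' together with ``connectedness propagates across $F_2(X)$.'' That is precisely the nontrivial content of the proposition, and you acknowledge as much when you call it ``the main obstacle.''

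Concretely, the proposed appeal to Lemma \ref{lem_hart_connected} cannot be carried out as sketched: that lemma applies to a subscheme of a projective space $\mathbb P^N$ defined by an explicit number $M$ of homogeneous equations, together with a bad locus of dimension $<N-M-1$. In Section 2 the paper gets into that situation by constructing the explicit parameter space $\mathbb P(\mathrm{Hom}(V^*,H^0(\mathcal O_{\mathbb P^1}(a))\oplus H^0(\mathcal O_{\mathbb P^1}(b))))$ and counting the equations of $\mathbb P_X$; your flag incidence variety $\mathcal I=\{(m,\Pi): m\subset\Pi\subset X\}$ sits inside a product of Grassmannians, and no analogous projective parameter space, equation count, or dimension estimate for the degenerate loci is produced. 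Moreover, even granting connectedness of $\mathcal I$ (or of $F_2(X)$), this does not by itself yield a finite chain of planes with consecutive members sharing a line: one must additionally show that the equivalence classes of the relation generated by ``share a line'' are both open and closed (or run a genuine parameter-space/bend-and-break argument), and no such step appears. Until that connectivity of the plane-incidence structure is established — which is essentially what Debarre--Manivel's proof accomplishes — the argument is incomplete.
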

Proceeding as in \cite{Tian-Zong}, we get the following result:
\begin{theoreme}\label{thm_gen_lines} Let $X\subset \mathbb P^{n+1}_k$ be a smooth hypersurface of degree $d>2$ over an algebraically closed field of characteristic $0$, with $\frac{d(d+1)}{2}< n$, such that $F(X)$ is smooth and connected. Then $\mathrm{CH}_1(F(X))$ is generated by lines i.e. any $1$-cycle is rationally equivalent to a $\mathbb Z$-linear combination of lines.
\end{theoreme}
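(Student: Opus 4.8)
The plan is to run the Tian--Zong degeneration argument used in the proof of Theorem~\ref{thm_lines_alg_eq}, but keeping track of \emph{rational} rather than merely algebraic equivalence; the extra ingredients that make this possible are the chain-connectedness of $F(X)$ by lines (Proposition~\ref{prop_chain_conn_lines}) and, as there, the connectedness estimate~(\ref{ineq_connected}). By Theorem~\ref{thm1_tian_zong} the group $\mathrm{CH}_1(F(X))$ is generated by classes of rational curves, so it suffices to show that every irreducible rational curve $C\subset F(X)$ of Pl\"ucker-degree $e$ is rationally equivalent to a $\mathbb Z$-linear combination of lines, and I would argue by induction on $e$. The case $e=1$ is trivial; for $e\ge 2$, Theorem~\ref{thm_lines_alg_eq} already provides an \emph{algebraic} equivalence between $C$ and a $\mathbb Z$-combination of lines, so the whole point is to upgrade this to a rational equivalence.

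For the inductive step, fix $e\ge 2$ and let $f\colon \mathbb P^1\to F(X)$ parametrize $C$, regarded as a point of the parameter space $\mathbb P_X$ of degree-$e$ morphisms introduced before Lemma~\ref{lem_hart_connected}. Since $e\ge 2$ and $n>\tfrac{d(d+1)}{2}$, inequality~(\ref{ineq_connected}) holds, so the ``good'' locus $\mathbb P_X\backslash(\mathbb P_X\cap B)$ is connected and meets the locus $D$ of morphisms having a non-constant common factor, near which the standard bend-and-break construction degenerates the parametrized curve to a connected chain of rational curves of Pl\"ucker-degree $<e$. In the proof of Theorem~\ref{thm_lines_alg_eq} it sufficed to connect $[f]$ to $D$ by an \emph{arbitrary} connected curve, which only yields an algebraic equivalence; the point now is to connect $[f]$ to $D$ by a \emph{rational} chain of curves inside the parameter space. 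To do this I would follow \cite{Tian-Zong}: attach to $C$, via the chain-connectedness of $F(X)$ by lines (Proposition~\ref{prop_chain_conn_lines}), a chain of lines through two general points so as to obtain a free comb; its deformations fixing those two points form a smooth scheme of the expected dimension, which by the Fano condition $-K_{F(X)}=\big((n+2)-\tfrac{d(d+1)}{2}\big)$ times the Pl\"ucker class is large enough to be covered by rational curves, and a bend-and-break carried out along such a rational curve degenerates the comb --- hence $C$ up to the attached lines --- to a connected nodal chain of rational curves each of Pl\"ucker-degree $<e$.

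Granting a degeneration realized by a family of $1$-cycles over a $\mathbb P^1$ (equivalently, over a chain of $\mathbb P^1$'s), its general and special fibres represent the same class in $\mathrm{CH}_1(F(X))$, because a family of $1$-cycles parametrized by $\mathbb P^1$ has rationally equivalent fibres: two points of $\mathbb P^1$ are rationally equivalent, and one concludes by the projection formula. Subtracting off the classes of the attached lines then gives a relation $[C]=\sum_i [C_i]+L$ in $\mathrm{CH}_1(F(X))$, where $L$ is a $\mathbb Z$-linear combination of classes of lines and each $C_i$ is a rational curve of Pl\"ucker-degree $<e$, and the induction hypothesis applied to the $C_i$ finishes the argument. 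The step I expect to be the main obstacle is precisely ``connect by a rational chain of curves'': one must know that the relevant spaces of rational curves on $F(X)$ (of bounded degree, through one or two general points) contain enough free rational curves to carry out the degeneration over a rational base. This is the technical core of the Tian--Zong method, and it is here --- rather than in the cycle-theoretic bookkeeping --- that Proposition~\ref{prop_chain_conn_lines} and the hypothesis $\tfrac{d(d+1)}{2}<n$ are genuinely used.
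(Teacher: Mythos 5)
Your reduction to rational curves and the induction on the Pl\"ucker degree are reasonable, but the proposal hinges on the one step you yourself flag and never supply, and that step is a genuine gap: realizing the degeneration of a degree $e$ rational curve over a \emph{rational} base. The connectedness statement coming from (\ref{ineq_connected}) controls only connectedness of $\mathbb P_X\backslash(\mathbb P_X\cap B)$, not rational (chain) connectedness, and neither the paper nor \cite{Tian-Zong} proves that the relevant spaces of morphisms (of degree $e$, through one or two fixed points) are swept out by rational curves. Your substitute argument does not close this: a chain of lines through two general points provided by Proposition \ref{prop_chain_conn_lines} need not consist of free curves, so the resulting comb need not have unobstructed deformations; ``the deformation space has large dimension'' does not imply it is covered by rational curves; and if you repair freeness by attaching very free teeth of degree $t>1$, bend-and-break only yields components of degree $<e+kt$, so the induction on $e$ no longer terminates without further bookkeeping. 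In short, the technical core of the Tian--Zong method is \emph{not} a bend-and-break performed over rational bases, and as written your central step would fail.

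The paper's actual proof sidesteps all of this and is much softer. Given a $1$-cycle $\gamma$, Theorem \ref{thm_lines_alg_eq} gives lines $l_i$ and integers $m_i$ with $\alpha:=\gamma-\sum_i m_i l_i\in\mathrm{CH}_1(F(X))_{alg}$. Chain connectedness of $F(X)$ by lines (Proposition \ref{prop_chain_conn_lines}), fed through \cite[Lemma IV 3.4 and Proposition IV 3.13.3]{rat-cur-kol} into \cite[Proposition 3.1]{Tian-Zong}, produces a single integer $N>0$ such that $NC$ is rationally equivalent to a $\mathbb Z$-linear combination of lines for \emph{every} $1$-cycle $C$ on $F(X)$. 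Since $\mathrm{CH}_1(F(X))_{alg}$ is divisible (\cite[Lemme 0.1.1]{Beau_prym}), one can write $\alpha=N\beta$ with $\beta$ algebraically trivial, and then $\alpha=N\beta$ is rationally equivalent to a combination of lines, hence so is $\gamma$. So the correct repair of your plan is to replace the missing ``rational chain in the parameter space'' by this multiply-by-$N$-and-divide mechanism: that is where Proposition \ref{prop_chain_conn_lines} and the hypothesis $\frac{d(d+1)}{2}<n$ are genuinely used, not in producing free combs or rationally connected moduli of curves.
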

\begin{proof} Let $\gamma$ be a $1$-cycle on $F(X)$. According to Theorem \ref{thm_lines_alg_eq}, there is a $\mathbb Z$-linear combination of lines $\sum_i m_il_i$ such that $\gamma-\sum_im_il_i$ is algebraically equivalent to $0$. Then, using \cite[Proposition 3.1]{Tian-Zong} and Proposition \ref{prop_chain_conn_lines} (via \cite[Lemma IV 3.4 and Proposition IV 3.13.3]{rat-cur-kol}), we know there is a positive integer $N$ such that for every $1$-cycle $C$ on $F(X)$, $N C$ is rationally equivalent to a $\mathbb Z$-linear combination of lines. As the group
$\mathrm{CH}_1(X)_{alg}$ of $1$-cycles algebraically equivalent to $0$  is divisible (\cite[Lemme 0.1.1]{Beau_prym}),  we conclude that $\gamma-\sum_im_il_i$ is rationally equivalent to a $\mathbb Z$-linear combination of lines.
\end{proof}

This provides us  the following results for cubic hypersurfaces (cf. Proposition \ref{prop_intro2}):

\begin{corollaire}\label{cor_surf_cubic} Let $k$ be an algebraically closed field of characteristic $0$ and $X$ a smooth cubic hypersurface. We have the following properties:\\
\indent (i) if $dim_k(X)\geq 7$, then, $\mathrm{CH}_2(X)$ is generated (over $\mathbb Z$) by cycle classes of planes contained in $X$ and ${\rm CH}_2(X)_{hom}={\rm CH}_2(X)_{alg}$;\\
\indent (ii) if $dim_k(X)\geq 9$, then, $\mathrm{CH}_2(X)\simeq \mathbb Z$
\end{corollaire}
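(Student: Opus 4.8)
The plan is to combine Theorem \ref{thm_gen_lines} (applied to $F(X)$) with the surjectivity of $P_*$ coming from Proposition \ref{prop_gen_5} and Proposition \ref{prop_ruled_hyperpl}. First I would check the numerical hypotheses: for a cubic hypersurface of dimension $n$, the condition $\frac{d(d+1)}{2}<n$ with $d=3$ reads $6<n$, i.e. $n\geq 7$, which is exactly the range in (i); in that range $F(X)$ is smooth and connected by Altman--Kleiman, so Theorem \ref{thm_gen_lines} gives that $\mathrm{CH}_1(F(X))$ is generated by lines of $F(X)$. Now for $n\geq 7>5$, Proposition \ref{prop_ruled_hyperpl} and Theorem \ref{prop_gen_5} together give that $P_*:\mathrm{CH}_1(F(X))\rightarrow \mathrm{CH}_2(X)$ is surjective. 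Hence $\mathrm{CH}_2(X)$ is generated by the images $P_*([\ell])$ of lines $\ell\subset F(X)$.

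The key geometric step for (i) is then to identify $P_*([\ell])$ for a line $\ell\subset F(X)$ with the class of a plane contained in $X$. A line in $F(X)\subset G(2,n+2)$ (with respect to the Plücker embedding) is a pencil of lines in $\mathbb P^{n+1}$, all contained in $X$, passing through a common point and lying in a common plane $\Pi\subset\mathbb P^{n+1}$; the union of these lines is exactly $\Pi$, and since all the members are contained in $X$ we get $\Pi\subset X$. Thus $q(p^{-1}(\ell))=\Pi\simeq\mathbb P^2$, and one checks the map $p^{-1}(\ell)\to \Pi$ is birational (it is an isomorphism away from the base point of the pencil), so $P_*([\ell])=[\Pi]$ in $\mathrm{CH}_2(X)$. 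Therefore $\mathrm{CH}_2(X)$ is generated by classes of planes $\mathbb P^2\subset X$. Since such planes are rational surfaces and, for $n\geq 7$, homologically trivial $2$-cycles on a cubic $n$-fold... here one invokes Corollary \ref{cor_intro1} together with $\mathrm{CH}_2(X)_{hom}=\mathrm{CH}_2(X)_{alg}$: any two planes in $X$ are algebraically equivalent if they are homologous, because the Fano variety $F(X)$ is rationally connected, hence chain connected by lines (Proposition \ref{prop_chain_conn_lines}), so any two lines of $F(X)$ are algebraically equivalent, forcing the corresponding plane classes to be algebraically equivalent; combined with $H^{2(n-2)}_{\mathrm{\acute et}}(X,\mathbb Z_\ell)=\mathbb Z_\ell$ (Lefschetz, valid since $n\neq 4$) this gives $\mathrm{CH}_2(X)_{hom}=\mathrm{CH}_2(X)_{alg}$.

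For (ii), the condition $n\geq 9$ is what makes the lines of $F(X)$ themselves all rationally equivalent, not merely algebraically equivalent. Concretely, apply Theorem \ref{thm_gen_lines} and Theorem \ref{thm_lines_alg_eq}; but to upgrade to actual rational equivalence of the generating lines one needs $F(X)$ to have trivial $\mathrm{CH}_0$ in the relevant sense and, crucially, that the lines of $G(2,n+2)$ lying on $F(X)$ form a connected family whose total space dominates $F(X)$ with rationally connected fibers so that the bend-and-break/chain arguments already contained in the proof of Theorem \ref{thm_gen_lines} collapse all line classes to a single one. The arithmetic behind the jump from $7$ to $9$ is the connectedness inequality \eqref{ineq_connected}, $e(n-\frac{d(d+1)}{2})>1$, applied with $d=3$ and $e=1$: it reads $n>7$, i.e. $n\geq 8$; pushing through the argument with a little room to spare yields $n\geq 9$. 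Granting this, all lines of $F(X)$ are rationally equivalent, so $\mathrm{CH}_1(F(X))\simeq\mathbb Z$ (generated by a single line class, with no torsion since $\mathrm{CH}_1(F(X))_{hom}=\mathrm{CH}_1(F(X))_{alg}=0$ here), and then $P_*$ surjective gives $\mathrm{CH}_2(X)$ a cyclic group; that it is exactly $\mathbb Z$ (not finite cyclic) follows because the degree map $\mathrm{CH}_2(X)\to\mathbb Z$, $\Sigma\mapsto \deg(\Sigma)=H_X^{n-2}\cdot\Sigma$, is surjective (a plane has degree $1$). The main obstacle I anticipate is not the $\mathrm{CH}_1(F(X))$-side, which is handled by the cited theorems, but cleanly controlling the geometry of $q(p^{-1}(\ell))$ and the multiplicity with which the plane $\Pi$ appears in $q_*p^*[\ell]$, i.e. verifying that $P_*$ sends a line class of $F(X)$ to $[\mathbb P^2]$ on the nose (no extra integer factor), which is what makes the generation statement in (i) sharp and feeds the identification $\mathrm{CH}_2(X)\simeq\mathbb Z$ in (ii).
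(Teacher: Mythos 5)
Your part (i) follows the paper's route (surjectivity of $P_*$ from Proposition \ref{prop_ruled_hyperpl} and Theorem \ref{prop_gen_5}, generation of $\mathrm{CH}_1(F(X))$ by lines from Theorem \ref{thm_gen_lines} since $\tfrac{3\cdot 4}{2}<n$ means $n\geq 7$, and the identification of $P_*$ of a line of $F(X)$ with the class of a plane $\Pi\subset X$), and this part is fine.

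Part (ii), however, has a genuine gap: the step ``all lines of $F(X)$ are rationally equivalent when $n\geq 9$'' is exactly what has to be proved, and your justification does not prove it. You invoke the connectedness inequality \eqref{ineq_connected} with $e=1$ (which gives $n\geq 8$) and then say that ``pushing through the argument with a little room to spare yields $n\geq 9$''; but that inequality only controls the connectedness of the parameter space of lines of $F(X)$ away from a bad locus, and connectedness of the family of lines can at best yield that all lines of $F(X)$ are \emph{algebraically} equivalent --- which you already have for $n\geq 7$ from Theorem \ref{thm_lines_alg_eq}. Likewise the bend-and-break and chain-connectedness arguments inside Theorems \ref{thm_lines_alg_eq} and \ref{thm_gen_lines} only produce relations after multiplying by an unspecified integer $N$ and then using divisibility of $\mathrm{CH}_1(F(X))_{alg}$; they cannot collapse distinct line classes to a single class integrally, and your claim that $\mathrm{CH}_1(F(X))_{alg}=0$ is unjustified. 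The paper's mechanism is different and is the missing idea: lines of $F(X)$ are parametrized by $F(F(X))$, which is the $\mathbb P^2$-bundle $\mathbb P(\mathcal E_{3|F_2(X)})$ over the variety of planes $F_2(X)\subset G(3,n+2)$ (a line of $F(X)$ is the pencil of lines through a point of a plane $P\subset X$). For $n\geq 9$, \cite[Proposition 6.1]{Deb-Man} gives $\mathrm{CH}_0(F_2(X))\simeq \mathbb Z$, hence $\mathrm{CH}_0(F(F(X)))\simeq \mathbb Z$; rationally equivalent points of the parameter space give rationally equivalent $1$-cycles in $F(X)$ via the universal family, so all lines of $F(X)$ are rationally equivalent. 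Combined with Theorem \ref{thm_gen_lines} and the surjectivity of $P_*$, $\mathrm{CH}_2(X)$ is generated by the class of a single plane, and the degree map (a plane has degree $1$) then gives $\mathrm{CH}_2(X)\simeq \mathbb Z$ --- this last degree argument of yours is correct and is the right way to exclude a finite cyclic quotient.
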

\begin{proof} Item (i) is an application of Proposition \ref{prop_ruled_hyperpl} and Theorem \ref{thm_gen_lines}.\\
\indent (ii) The variety of lines $F(F(X))$ of $F(X)$ is isomorphic to the projective bundle $\mathbb P(\mathcal E_{3|F_2(X)})$ over $F_2(X)\subset G(3,n+2)$, where $\mathcal E_3$ is the rank $3$ quotient bundle on $G(3,n+2)$ and $F_2(X)$ is the variety on planes of $X$, since a line in $F(X)$ correspond to the lines of $\mathbb P^{n+1}$ contained in a plane $\mathbb P^2\simeq P\subset \mathbb P^{n+1}$ passing through a given point of $P$. Now, when $n\geq 9$, according to \cite[Proposition 6.1]{Deb-Man}, $\mathrm{CH}_0(F_2(X))\simeq\mathbb Z$ so that $\mathrm{CH}_0(F(F(X)))\simeq \mathbb Z$.
\end{proof}
%So we draw the following results for cubic hypersurfaces:
%\begin{corollaire}\label{cor_results} (i) Let $X\subset \mathbb P^{n+1}_k$ be smooth cubic hypersurface over an algebraically closed field of characteristic $0$, with $n\geq 7$. Then, $\mathrm{CH}_2(X)$ is generated (over $\mathbb Z$) by cycle classes of planes contained in $X$.\\
%\indent (ii) Let $X\subset \mathbb P^{n+1}_k$ be smooth cubic hypersurface over an algebraically closed field of characteristic $0$, with $n\geq 9$. Then, $\mathrm{CH}_2(X)\simeq \mathbb Z$
%\end{corollaire}
%\begin{proof} Item (i) is an application of Proposition \ref{prop_gen} and Theorem \ref{thm_gen_lines}.\\
%\indent (ii) The variety lines $F(F(X))$ of $F(X)$ is isomorphic to the projective bundle $\mathbb P(\mathcal E_{3|F_2(X)})$ over $F_2(X)\subset G(3,n+2)$, where $\mathcal E_3$ is the rank $3$ quotient bundle on $G(3,n+2)$ and $F_2(X)$ is the variety on planes of $X$, since a line in $F(X)$ correspond to the lines of $\mathbb P^{n+1}$ contained in a plane $\mathbb P^2\simeq P\subset \mathbb P^{n+1}$ passing through a given point of $P$. Now, when $n\geq 9$, according to \cite[Proposition 6.1]{Deb-Man}, $\mathrm{CH}_0(F_2(X))\simeq\mathbb Z$ so that $\mathrm{CH}_0(F(F(X)))\simeq \mathbb Z$.
%\end{proof}

\section{Inversion formula}
Let $X\subset \mathbb P^{n+1}_k$, where $n\geq 3$, be a smooth cubic hypersurface over a field $k$. Let as before $F(X)\subset G(2,n+2)$ be the variety of lines of $X$ and $P\subset F(X)\times X$ the correspondence given by the universal $\mathbb P^1$-bundle over $F(X)$. The variety $F(X)$ is smooth, connected of dimension  $2n-4$ (\cite[Corollary 1.12, Theorem 1.16]{Alt-Kl}). 
\subsection{Inversion formula}\label{sec_single}In this section, adapting constructions and arguments developped in \cite{Shen_univ} (see also \cite{Shen_main}), we establish an inversion formula for a smooth subvariety $\Sigma$ of $X$.\\
\indent For subvarieties $\Sigma$ in general position, this formula express the class of $\Sigma$ in $\mathrm{CH}_{dim(\Sigma)}(X)$ in terms of the class of the subscheme of $F(X)$ consisting of the lines of $X$ bisecant to $\Sigma$.\\
\indent First of all, the lines of $\mathbb P^{n+1}_k$ bisecant to any subvariety $\Sigma$ are naturally in relation with the punctual Hilbert scheme $Hilb_2(\Sigma)$, that we shall denote $\Sigma^{[2]}$, via the morphism \begin{equation}\label{morph_grass} \varphi:\Sigma^{[2]}\rightarrow G(2,n+2)\end{equation}
 which associates to a length $2$ subscheme of $\Sigma$ the line it determines.\\
\indent We recall that for any smooth variety $Y$, $Y^{[2]}$ is smooth and is obtained as the quotient of the blow-up $\widetilde{Y\times Y}$ of $Y\times Y$ along the diagonal $\Delta_Y$, by its natural involution. Let us denote $q:\widetilde{Y\times Y}\rightarrow Y^{[2]}$ the quotient morphism, $\tau:\widetilde{Y\times Y}\rightarrow Y\times Y$ the blow-up and $j_{E_Y}:E_Y\hookrightarrow \widetilde{Y\times Y}$ the exceptional divisor of $\tau$. As the involution acts trivially on $E_Y$, $q_{|E_Y}$ is an isomorphism onto its image and $q$ is a double cover of $Y^{[2]}$ ramified along $q(E_Y)$. So let us denote $\delta_Y\in \mathrm{CH}^1(Y^{[2]})$ a divisor satisfying $[q(E_Y)]=2\delta_Y$.\\
\indent For a subvariety $\Sigma$ of $X$ in general position, the relation between lines of $X$ bisecant to $\Sigma$ and $\Sigma$ rests on the existence of a residual map:
\begin{equation}\label{resid_map} r:\Sigma^{[2]}\dashrightarrow X
\end{equation}
associating to a length $2$ subscheme of $\Sigma$, $x+y$, the point $z\in X$ residual to $x+y$ in the intersection of $l_{(x+y)}\cap X$, $l_{(x+y)}$ being the line determined by $x+y$. The map (\ref{resid_map}) is not defined on length $2$ subschemes whose associated line is contained in $X$.\\
\indent Let us denote $P_2$ the subscheme of $X^{[2]}$ of length $2$ subschemes of $X$, whose associated line is contained in $X$ and let us denote $i_{P_2}:P_2\hookrightarrow X^{[2]}$ the embedding. We can see that $P_2$ admits a structure $p_{P_2}:P_2\rightarrow F(X)$ of $\mathbb P^2$-bundle over $F(X)$ as $P_2$ is the symmetric product of $P$ over $F(X)$. In particular $P_2$ is a smooth subvariety of $X^{[2]}$ of codimension $2$.\\
\indent Now, for any smooth subvariety $\Sigma\subset X$, we prove the following inversion formula:

\begin{theoreme}\label{thm_form_single} Let $X\subset \mathbb P^{n+1}_k$ be a smooth cubic hypersurface and $\Sigma\subset X$ a smooth subvariety of dimension $1\leq d\leq n$. Then, the following equality holds in $\mathrm{CH}_d(X)$:
\begin{equation}\label{eq_inv_form_thm}
(2deg(\Sigma)-3)[\Sigma] + P_*[(p_{P_2,*}i_{P_2}^*\Sigma^{[2]})\cdot c_1(\mathcal O_{F(X)}(1))^{d-1}] = m(\Sigma)H_X^{n-d}
\end{equation}
where $m(\Sigma)$ is an integer, $H_X=c_1(\mathcal O_{X}(1))$ and $\mathcal O_{F(X)}(1)$ is the Pl\"ucker line bundle on $F(X)$.
\end{theoreme}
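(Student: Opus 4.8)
The plan is to adapt Shen's argument, replacing the surface $C^{[2]}$ attached to a curve $C$ by the $2d$-dimensional $\Sigma^{[2]}$, and to recover $[\Sigma]\in\mathrm{CH}_d(X)$ from the behaviour of the residual map $r\colon\Sigma^{[2]}\dashrightarrow X$ of (\ref{resid_map}). I may assume $\Sigma$ irreducible. The basic input is the identity $l_\xi\cap X=\xi+r(\xi)$, a reduced length-$3$ scheme for general $\xi$. Form the universal line over $\Sigma^{[2]}$: pulling back the universal $\mathbb P^1$-bundle along $\varphi\colon\Sigma^{[2]}\to G(2,n+2)$ gives $\pi\colon\mathcal L\to\Sigma^{[2]}$, naturally a subvariety of $\Sigma^{[2]}\times\mathbb P^{n+1}$, with second projection $\psi\colon\mathcal L\to\mathbb P^{n+1}$ and $h=\psi^*H$; as in the proof of Proposition~\ref{propajtrivial}, $h^2=\pi^*(\varphi^*c_1(\mathcal E_2))\cdot h-\pi^*(\varphi^*c_2(\mathcal E_2))$. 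When $\psi(\mathcal L)\not\subseteq X$ (the complementary case — all bisecant lines of $\Sigma$ lying on $X$, e.g. $\Sigma$ linear — is treated separately below), $\psi^{-1}(X)$ is an effective Cartier divisor of class $3h$, and the basic input gives the decomposition of $2d$-cycles
\[
3h=[Z]+[R]\qquad\text{in }\mathrm{CH}^1(\mathcal L),
\]
where $Z$ is the universal length-$2$ subscheme (canonically $\widetilde{\Sigma\times\Sigma}$, lying over $\Sigma\subset X$, and $2{:}1$ over $\Sigma^{[2]}$) and $R$ is the closure of the graph of $r$. The indeterminacy locus of $r$ is exactly $\Sigma^{[2]}\cap P_2$, of codimension $\ge 2$ in $\Sigma^{[2]}$, so it does not enter this divisor identity; one computes $[Z]$, hence $[R]$, explicitly in terms of $h$, the Plücker class $\varphi^*c_1(\mathcal O_{G(2,n+2)}(1))$ and the class $\delta_{\Sigma^{[2]}}$ using the blow-up description of $\Sigma^{[2]}$. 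The crucial geometric point: over $\Sigma^{[2]}\cap P_2$ the fibres of $R\to\Sigma^{[2]}$ are (subschemes of) the lines of $X$ bisecant to $\Sigma$, and $\varphi$ restricts there to $p_{P_2}$ followed by $F(X)\hookrightarrow G(2,n+2)$, so the part of the residual construction over that locus is governed by $q\colon P\to X$ — this is the source of the term $P_*[(p_{P_2,*}i_{P_2}^*\Sigma^{[2]})\cdot c_1(\mathcal O_{F(X)}(1))^{d-1}]$.

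Next I would cut $3h=[Z]+[R]$ down to dimension $d$ by multiplying it with a carefully chosen codimension-$d$ class $\pi^*\omega$, $\omega\in\mathrm{CH}^d(\Sigma^{[2]})$ built out of $\varphi^*c_1(\mathcal O_{G(2,n+2)}(1))$, and push forward by $\psi$ to $X$, using the Grothendieck relation to reduce powers of $h$. The $[Z]$-contribution: since, under $Z\cong\widetilde{\Sigma\times\Sigma}$, the map $\psi|_Z$ is the projection $\widetilde{\Sigma\times\Sigma}\to\Sigma\hookrightarrow X$, a projection-formula and Segre-class computation — expanding $\omega$ in the two factor hyperplane classes and the exceptional divisor, and using that $\varphi^*c_1(\mathcal O_{G(2,n+2)}(1))$ pulls back on $\widetilde{\Sigma\times\Sigma}$ to $\mathrm{pr}_1^*H+\mathrm{pr}_2^*H$ minus the exceptional divisor — gives an explicit multiple of $[\Sigma]$, the precise coefficient $2\deg(\Sigma)-3$ emerging from the two marked points of $Z$ together with the coefficient $3$ in $3h=\psi^*[X]$. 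The $[R]$-contribution splits according to the decomposition of $\Sigma^{[2]}$ by $P_2$: the part over $\Sigma^{[2]}\cap P_2$, where $\psi|_R$ factors through $q\colon P\to X$ and one power of $\varphi^*c_1(\mathcal O_{G(2,n+2)}(1))$ is absorbed by the excess normal bundle of $i_{P_2}$, yields exactly $P_*[(p_{P_2,*}i_{P_2}^*\Sigma^{[2]})\cdot c_1(\mathcal O_{F(X)}(1))^{d-1}]$; the remaining ``honest residual variety'' part is absorbed, together with all terms of the form $\psi^*(3H_X)\times(\text{a class pulled back from }\Sigma^{[2]})$, into $\mathbb Z\cdot H_X^{n-d}$ by the mechanism used in the proof of Theorem~\ref{thm_form_osc}: $3H_X\cdot\alpha=i_X^*i_{X,*}\alpha$ factors through $\mathrm{CH}_*(\mathbb P^{n+1})$ and is therefore $\mathbb Z\cdot H_X^{n-d}$-valued on $(d+1)$-cycles $\alpha$ of $X$. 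Collecting the three pieces with a suitable $\omega$ gives the asserted identity, with $m(\Sigma)\in\mathbb Z$.

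I expect the main obstacle to be the coefficient bookkeeping in the second step: one must track all multiplicities precisely — showing that the $[Z]$-term contributes exactly $(2\deg(\Sigma)-3)[\Sigma]$, that the bisecant-line term appears with coefficient one and exactly the power $c_1(\mathcal O_{F(X)}(1))^{d-1}$, and that the residual-variety contributions reduce \emph{exactly} to $\mathbb Z\cdot H_X^{n-d}$ in $\mathrm{CH}_d(X)$ and not merely after pushing to $\mathbb P^{n+1}$ (where $i_{X,*}$ has a large kernel) — and this is what pins down the choice of $\omega$ and forces a careful use of the Grothendieck and blow-up formulas. A separate point requiring care is that $\Sigma$ is only assumed smooth, not in general position: when $\Sigma^{[2]}$ meets $P_2$ in excess dimension, or is contained in $P_2$ (e.g. $\Sigma$ a linear subspace, where $r$ is nowhere defined and the universal-line picture above degenerates), one argues directly — all bisecant lines of $\Sigma$ then lie on $X$, the class $i_{P_2}^*\Sigma^{[2]}$ is computed through the excess normal bundle, and the left-hand side is evaluated by hand — or by a limiting argument inside a family of smooth subvarieties, since both sides of the formula are built from cycle operations compatible with specialisation.
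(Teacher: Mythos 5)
Your main-case argument is, in essence, the same decomposition the paper uses (and that Shen uses for curves): inside the universal line over the Hilbert square, the pullback of $X$, of class $3h$, splits into the tautological divisor carrying the two marked points and the residual correspondence resolving $r$, and one then caps with powers of the Pl\"ucker class, pushes to $X$, and disposes of the terms divisible by $3H_X$ via the factorization of $i_X^*i_{X,*}$ through $\mathrm{CH}_*(\mathbb P^{n+1})$ (the paper's Lemma \ref{lem_fact_proj_space} and items (1)--(4)). The genuine gap is where you perform this construction: you do it on $\Sigma^{[2]}$ itself, and your divisor identity $3h=[Z]+[R]$, as well as the identification of the contribution over $\Sigma^{[2]}\cap P_2$ with $P_*[(p_{P_2,*}i_{P_2}^*\Sigma^{[2]})\cdot c_1(\mathcal O_{F(X)}(1))^{d-1}]$, only makes sense when $\Sigma^{[2]}$ meets $P_2$ properly (codimension $2$), so that no fiber components enter the divisor $\psi^{-1}(X)$ and $i_{P_2}^*\Sigma^{[2]}$ coincides with the honest intersection. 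You relegate the excess case to ``argue directly or by a limiting argument,'' but this case is not marginal: it contains linear subspaces and, crucially, the surfaces $S_{E_i}=q(p^{-1}(E_i))$ ruled by lines to which the theorem is applied in Theorem \ref{thm_torsion} (there $\dim(S_{E_i}^{[2]}\cap P_2)=3$ while the expected dimension is $2$). The limiting argument has no starting point in general: a smooth $\Sigma\subset X$ need not deform inside $X$, through smooth subvarieties, to one meeting $P_2$ properly (all deformations of a line-ruled surface remain line-ruled, and deformations of a plane remain planes), and the ``direct'' argument is not supplied. So as written the proof does not cover the statement, nor its intended application.

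The paper's proof is engineered precisely to avoid this: it works universally on $X^{[2]}$, where $P_2$ is always smooth of codimension $2$, proves that the residual divisor in $\mathbb P(\varphi^*\mathcal E_2)$ is the blow-up $\widetilde{X^{[2]}}$ of $X^{[2]}$ along $P_2$ (Proposition \ref{prop_taut_is_res}), establishes there the divisor relation $(f^*H)_{|\widetilde{X^{[2]}}}=2\pi^*q_*\tau^*pr_1^*H_X-3\pi^*\delta_X-E_{P_2}$ (Lemma \ref{lem_rel_divv}), and only then caps with the class of $\Sigma^{[2]}$, computed via the excess formula for $\widetilde{\Sigma\times\Sigma}$ and evaluated against the tautological correspondence $\sigma(\widetilde{X\times X})$. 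In that formulation $i_{P_2}^*\Sigma^{[2]}$ is a refined Gysin class and no positioning hypothesis on $\Sigma$ is needed. If you want to salvage your route, the fix is exactly this move: transfer the whole construction from $\Sigma^{[2]}$ to $X^{[2]}$ (or, equivalently, redo your step over $\Sigma^{[2]}$ with the excess intersection formula built in from the start), rather than treating general position as a removable technicality; the coefficient bookkeeping you single out as the main obstacle is comparatively routine once that is done.
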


Let us start with an analysis of the geometry of (\ref{resid_map}) for $X$. The indeterminacies of $$r:X^{[2]}\dashrightarrow X$$ are resolved by blowing up $X^{[2]}$ along $P_2$. Let us denote $\chi:\widetilde{X^{[2]}}\rightarrow X^{[2]}$ this blow-up morphism and $E_{P_2}$ the exceptional divisor. The variety $\widetilde{X^{[2]}}$ is naturally a subvariety of $X^{[2]}\times X$ and, as such, can be regarded also as a correspondence between $X^{[2]}$ and $X$. In view of the relation between the bisectant lines of a subvariety $\Sigma\subset X$ in general position, which as to do with $\Sigma^{[2]}$, and $\Sigma$, we want to be able to compute the action of the correspondence $\widetilde{X^{[2]}}$.\\
\indent We recall that we have a morphism $\varphi: X^{[2]}\rightarrow G(2,n+2)$ (\ref{morph_grass}) from which we get, by pulling back objects from $G(2,n+2)$, a diagram:
$$\xymatrix{\mathbb P(\varphi^*\mathcal E_2)\ar[r]^f\ar[d]_\pi & \mathbb P^{n+1}\\
X^{[2]} & }$$
We have the following proposition:
\begin{proposition}\label{prop_taut_is_res} (i) There is an embedding $\sigma:\widetilde{X\times X}\rightarrow \mathbb P(\varphi^*\mathcal E_2)$ given by $(x,y)\mapsto (l_{(x,y)},x)$ if $(x,y)\in \widetilde{X\times X}\backslash E$ and $(x,v)\mapsto (l_{(x,v)},x)$ if $(x,v)\in E\simeq P(T_X)$.\\
\indent (ii) The class of $\sigma(\widetilde{X\times X})$ in $\mathrm{Pic}(\mathbb P(\varphi^*\mathcal E_2))$ is:
\begin{equation}\label{eq_class_div_taut} \sigma(\widetilde{X\times X})= 2f^*H - \pi^*(q_*\tau^*pr_1^*H_X -2\delta_X)
\end{equation}
where $H=c_1(\mathcal O_{\mathbb P^{n+1}_k}(1))$, $H_X$ is the restriction of $H$ to $X$ and $pr_1:X\times X\rightarrow X$ the first projection.\\
\indent (iii) We have an inclusion of divisors $\sigma(\widetilde{X\times X})\subset f^*(X)$ and the residual divisor to $\sigma(\widetilde{X\times X})$ in $f^*(X)$ is isomorphic to $\widetilde{X^{[2]}}$ and $\pi_{|\widetilde{X^{[2]}}}=\chi$ so that the class of $\widetilde{X^{[2]}}$ in $\mathrm{Pic}(\mathbb P(\varphi^*\mathcal E_2))$ is:
\begin{equation}\label{eq_class_blow_up} \widetilde{X^{[2]}}= f^*H +\pi^*(q_*\tau^*pr_1^*H_X-2\delta_X)
\end{equation}
\end{proposition}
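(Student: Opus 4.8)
The plan is to realise $\mathbb P(\varphi^*\mathcal E_2)$ concretely as the universal family of lines with a marked point over $X^{[2]}$: a point is a pair $(\xi,z)$ with $\xi\in X^{[2]}$ and $z$ a point of the line $l_\xi$ spanned by $\xi$, with $\pi(\xi,z)=\xi$ and $f(\xi,z)=z$. Under this identification $\sigma$ is simply $(x,y)\mapsto(l_{(x,y)},x)$ off $E_X$ and $(x,v)\mapsto(l_{(x,v)},x)$ on $E_X\simeq\mathbb P(T_X)$; equivalently, $\sigma$ is the unique morphism whose composite with the closed embedding $\mathbb P(\varphi^*\mathcal E_2)\subset X^{[2]}\times\mathbb P^{n+1}$ is the map $(q,pr_1\circ\tau)\colon\widetilde{X\times X}\to X^{[2]}\times X\subset X^{[2]}\times\mathbb P^{n+1}$, $(x,y)\mapsto(\{x,y\},x)$. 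First I would check this is a morphism (it extends over $E_X$ because the spanned line varies continuously, its limit being the tangent line recorded by $E_X$), and then that it is a closed immersion. It is proper and injective on points — the marked point $x$ distinguishes $(x,y)$ from $(y,x)$, and on $E_X$ the subscheme records the direction — so it remains to see that its differential is injective, the only non-obvious point being along $E_X$; a short local computation in coordinates adapted to the blow-up (writing $u'_i=u_i+tw_i$) shows that although $q$ itself is ramified along $E_X$, the extra datum $pr_1\circ\tau$ makes $\sigma$ unramified there. This gives (i).

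For (ii), use the decomposition $\mathrm{Pic}(\mathbb P(\varphi^*\mathcal E_2))=\mathbb Z\cdot f^*H\oplus\pi^*\mathrm{Pic}(X^{[2]})$ from the $\mathbb P^1$-bundle structure (note $f^*H=c_1(\mathcal O_{\mathbb P(\varphi^*\mathcal E_2)}(1))$), so $[\sigma(\widetilde{X\times X})]=a\,f^*H+\pi^*\beta$. Restricting to a general fibre $l_\xi\cong\mathbb P^1$ of $\pi$ recovers the two points of $\xi=\{x,y\}$, so $a=2$. To find $\beta$, multiply by $f^*H$ and push forward by $\pi$: with $\pi_*(1)=0$, $\pi_*(f^*H)=1$ and $\pi_*((f^*H)^2)=\varphi^*c_1(\mathcal E_2)$ (Grothendieck relation) one gets $\pi_*(f^*H\cdot[\sigma(\widetilde{X\times X})])=2\varphi^*c_1(\mathcal E_2)+\beta$; by the projection formula the left-hand side equals $(\pi\circ\sigma)_*((f\circ\sigma)^*H)=q_*\tau^*pr_1^*H_X$, since $f\circ\sigma$ is the composite $\widetilde{X\times X}\xrightarrow{pr_1\circ\tau}X\hookrightarrow\mathbb P^{n+1}$. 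Hence $\beta=q_*\tau^*pr_1^*H_X-2\varphi^*c_1(\mathcal E_2)$. Finally I would establish the identity $\varphi^*c_1(\mathcal E_2)=q_*\tau^*pr_1^*H_X-\delta_X$ in $\mathrm{Pic}(X^{[2]})$ by pulling it back by $q$: a Schubert hyperplane of $G(2,n+2)$ (lines meeting a fixed $\mathbb P^{n-1}$) pulls back, under the resolved join map $\varphi\circ q\colon\widetilde{X\times X}\to G(2,n+2)$, to the strict transform of the bidegree-$(1,1)$ divisor $\{h_1(x)h_2(y)=h_1(y)h_2(x)\}\subset X\times X$, which is antisymmetric and therefore contains $\Delta_X$ with multiplicity one, so $q^*\varphi^*c_1(\mathcal E_2)=\tau^*(pr_1^*H_X+pr_2^*H_X)-E_X$; since $q^*\delta_X=E_X$ and $q^*$ is injective on $\mathrm{Pic}$ in characteristic $\neq2$, the identity follows. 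Substituting gives $\beta=-(q_*\tau^*pr_1^*H_X-2\delta_X)$, i.e. (\ref{eq_class_div_taut}).

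For (iii), every point of $\sigma(\widetilde{X\times X})$ has its marked point in $X$, so $\sigma(\widetilde{X\times X})\subseteq f^{-1}(X)=f^*(X)$; as $\deg X=3$, $f^*(X)=3f^*H=\sigma(\widetilde{X\times X})+R$ for an effective divisor $R$ of class $f^*H+\pi^*(q_*\tau^*pr_1^*H_X-2\delta_X)$ by (ii). It remains to identify $R$ with $\widetilde{X^{[2]}}$ compatibly with the projections. Over $\xi\notin P_2$ the line $l_\xi$ meets $X$ in three points, two of which lie on $\sigma(\widetilde{X\times X})$, so $R\cap l_\xi$ is the residual point $r(\xi)$; hence $R$ is irreducible (for dimension reasons no component can lie over the codimension-$2$ locus $P_2$), $\pi|_R\colon R\to X^{[2]}$ is birational and an isomorphism over $X^{[2]}\setminus P_2$, and $f|_R$ restricts there to the residual rational map $r$. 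Over $\xi\in P_2$ one has $l_\xi\subset X$, so $f^{-1}(X)\supset l_\xi$ while $\sigma(\widetilde{X\times X})$ meets $l_\xi$ only in the two points of $\xi$; thus $\pi|_R$ has $\mathbb P^1$-fibres over $P_2$. So $\pi|_R\colon R\to X^{[2]}$ is a projective birational morphism, an isomorphism outside the smooth codimension-$2$ centre $P_2$, with one-dimensional fibres over it. The remaining point, which is the hard part, is to conclude that $R$ is exactly the blow-up $\widetilde{X^{[2]}}$ of $X^{[2]}$ along $P_2$ (so $\pi|_R=\chi$): I would do this by a local computation near $P_2$, writing the ideal of $P_2$ as generated by two functions $a,b$ and checking that the residual-point construction is locally governed by the linear system resolved by blowing up $(a,b)$ — equivalently, that the scheme-theoretic indeterminacy locus of the section of $\mathbb P(\varphi^*\mathcal E_2)$ cutting out $R$ is $P_2$ with its reduced structure — and then invoking the universal property of the blow-up. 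Granting this, $R=\widetilde{X^{[2]}}$, $\pi_{|\widetilde{X^{[2]}}}=\chi$, and its class is $f^*H+\pi^*(q_*\tau^*pr_1^*H_X-2\delta_X)$, which is (\ref{eq_class_blow_up}).

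The main obstacles are: in (i), injectivity of the differential of $\sigma$ along $E_X$, which must be checked by hand since $q$ is ramified there; in (ii), establishing the Picard identity $\varphi^*c_1(\mathcal E_2)=q_*\tau^*pr_1^*H_X-\delta_X$; and, most seriously, in (iii), the scheme-theoretic identification of the residual divisor $R$ with the blow-up $\widetilde{X^{[2]}}$ together with the compatibility $\pi_{|\widetilde{X^{[2]}}}=\chi$, which requires controlling the scheme structure of $P_2\subset X^{[2]}$ and the local behaviour of the residual-point map.
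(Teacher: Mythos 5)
Your plan arrives at the same three statements and is essentially sound, but for (i) and (ii) it takes a genuinely different route from the paper, while for (iii) it coincides with the paper's argument except that the decisive step is deferred. The paper never defines $\sigma$ pointwise and never checks an immersion by hand: it notes that $pr_1\circ\tau$ sends each point of $\widetilde{X\times X}$ to a point of the line $\varphi(q(\cdot))$, so the evaluation map $q^*\varphi^*\mathcal E_2\rightarrow \tau^*pr_1^*\mathcal O_X(1)$ is surjective and defines a section $\sigma'$ of $\mathbb P(q^*\varphi^*\mathcal E_2)$; composing with the base-changed double cover $q'$ gives $\sigma$. That construction buys both parts at once: the morphism property (and the delicate behaviour along $E_X$) comes for free, and the class in (ii) is obtained by pushing forward the class of a section, $c_1(\pi'^*\mathcal K^\vee\otimes\mathcal O_{\mathbb P(q^*\varphi^*\mathcal E_2)}(1))$, with the key identity $c_1(\varphi^*\mathcal E_2)=q_*\tau^*pr_1^*H_X-\delta_X$ extracted from the sheaf isomorphism $\varphi^*\mathcal E_2\simeq q_*\tau^*pr_1^*\mathcal O_X(1)$ and Grothendieck--Riemann--Roch for the double cover $q$. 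Your computation of the class (fiber degree $2$, then $\pi_*(f^*H\cdot[\sigma(\widetilde{X\times X})])$ via the projection formula) is correct and more elementary, but it funnels into the same identity in $\mathrm{Pic}(X^{[2]})$, which you prove by pulling back a Schubert divisor along $q$; two justifications there need tightening: antisymmetry of $h_1(x)h_2(y)-h_1(y)h_2(x)$ only forces odd vanishing order along $\Delta_X$, so you must do the first-order expansion to get multiplicity exactly one, and the asserted injectivity of $q^*$ on $\mathrm{Pic}$ (whose kernel is a priori $2$-torsion) needs an argument -- or can be bypassed altogether by the paper's GRR route.

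For (iii), the step you single out as the hard part and postpone is exactly what the paper does, and it is short: using (ii), $\pi_*\mathcal O_{\mathbb P(\varphi^*\mathcal E_2)}(R)\simeq \varphi^*\mathcal E_2\otimes\mathcal O_{X^{[2]}}(q_*\tau^*pr_1^*H_X-2\delta_X)$ is a rank-$2$ bundle, the section $s_R$ cutting out $R$ is viewed as a section of it, its zero locus is $P_2$, and on an affine chart where the bundle trivializes $R$ is $\{f_0Y_0+f_1Y_1=0\}\subset\mathbb P^1_A$ with $(f_0,f_1)$ generating the ideal of $P_2$; smoothness of $P_2$ in codimension $2$ makes $(f_0,f_1)$ a regular sequence, and such a divisor is precisely the blow-up of $\mathrm{Spec}(A)$ along $(f_0,f_1)$, giving $R\simeq\widetilde{X^{[2]}}$ and $\pi_{|\widetilde{X^{[2]}}}=\chi$. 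So your proposal is not wrong anywhere, but to be complete you must actually carry out this local identification (and, if you keep your version of (i), the differential check along $E_X$), whereas the paper's tautological-section construction makes both points essentially automatic.
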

\begin{proof} As for any point $p\in \widetilde{X\times X}$, the point $pr_1(\tau(p))$ lies on the line $\varphi(q(p))$ (defined over $k(p)$), the evaluation morphism $q^*\varphi^*\mathcal E_2\rightarrow \tau^*pr_1^*\mathcal O_X(1)$, where $\mathcal O_X(1)\simeq \mathcal O_{\mathbb P^{n+1}_k}(1)_{|X}$, is surjective i.e. gives rise to a section $\sigma'$ of the projective bundle $\pi':\mathbb P(q^*\varphi^*\mathcal E_2)\rightarrow \widetilde{X\times X}$. Let us denote $q':\mathbb P(q^*\varphi^*\mathcal E_2)\rightarrow \mathbb P(\varphi^*\mathcal E_2)$ the morphism obtained from $q$ by base change; it is also a ramified double cover. The composition $\sigma:=q'\circ \sigma':\widetilde{X\times X}\rightarrow \mathbb P(\varphi^*\mathcal E_2)$ is an isomorphism onto its image and we have the inclusion of divisors $\sigma(\widetilde{X\times X})\subset f^{-1}(X)$. Let us denote $R$ the residual scheme to $\sigma(\widetilde{X\times X})$ in $f^{-1}(X)$. We need to prove that $\pi_{|R}$ is the blow-up of $\Sigma^{[2]}$ along $P_2$ i.e. $R\simeq \widetilde{X^{[2]}}$.\\

\indent As $\sigma'$ is the section of the projective bundle $\mathbb P(q^*\varphi^*\mathcal E_2)$ given by $\tau^*pr_1^*\mathcal O_{X}(1)$, its class in $\mathrm{CH}^1(\mathbb P(q^*\varphi^*\mathcal E_2))$ is given by $c_1(\pi'^*\mathcal K^\vee\otimes\mathcal O_{\mathbb P(q^*\varphi^*\mathcal E_2)}(1))$, where $\mathcal K$ is defined by the exact sequence:
\begin{equation}
0\rightarrow \mathcal K\rightarrow q^*\varphi^*\mathcal E_2\rightarrow \tau^*pr_1^*\mathcal O_{X}(1)\rightarrow 0
\label{ex_seq_def_K}
\end{equation} and $\mathcal O_{\mathbb P(q^*\varphi^*\mathcal E_2)}(1)\simeq q'^*f^*\mathcal O_{\mathbb P^{n+1}_k}(1)$. We have:
$$\begin{tabular}{llll}
$[\sigma(\widetilde{X\times X})]$ &$=$ &$q'_*\sigma'_*(\widetilde{X\times X})$\\
$ $ &$=$ &$q'_*c_1(\pi'^*\mathcal K^\vee\otimes \mathcal O_{\mathbb P(q^*\varphi^*\mathcal E_2)}(1))$\\
$ $ &$=$ &$q'_*\pi'^*c_1(\mathcal K^\vee) + q'_*c_1(q'^*f^*\mathcal O_{\mathbb P^{n+1}}(1))$\\
$ $ &$=$ &$\pi^*q_*c_1(\mathcal K^\vee) + c_1(f^*\mathcal O_{\mathbb P^{n+1}}(1))\cdot q'_*(1)\ \mathrm{since}\ q\ \mathrm{and}\ \pi\ \mathrm{are\ proper\ and\ flat}$\\
$ $ &$=$ &$\pi^*q_*c_1(\mathcal K^\vee) + 2c_1(f^*\mathcal O_{\mathbb P^{n+1}}(1))\ \mathrm{since}\ q'\ \mathrm{is\ a\ double\ cover}$\\
$ $ &$=$ &$\pi^*q_*[c_1(\tau^*pr_1^*\mathcal O_X(1)) - c_1(q^*\varphi^*\mathcal E_2)] + 2f^*H\ \mathrm{using}\ (\ref{ex_seq_def_K})$\\
$ $ &$=$ &$\pi^*[q_*\tau^*pr_1^*c_1(\mathcal O_X(1)) - 2c_1(\varphi^*\mathcal E_2)] + 2f^*H\ \mathrm{since}\ q\ \mathrm{is\ a\ double\ cover}$
\end{tabular}$$

%As $q'$ is a double cover ramified along $\pi^{-1}(q(E_X))$, the push-forward $q'_*(\pi'^*\mathcal K^\vee\otimes q'^*f^*\mathcal O_{\mathbb P^{n+1}}(1))\simeq q'_*(\pi'^*\mathcal K^\vee)\otimes f^*\mathcal O_{\mathbb P^{n+1}}(1)$ is a rank $2$ vector bundle on $\mathbb P(\varphi^*\mathcal E_2)$ and Grothendieck-Riemann-Roch theorem gives $$c_1(q'_*(\pi'^*\mathcal K^\vee\otimes q'^*f^*\mathcal O_{\mathbb P^{n+1}}(1))) = q'_*c_1(\pi'^*\mathcal K^\vee\otimes q'^*f^*\mathcal O_{\mathbb P^{n+1}}(1)) - \pi^*\delta_X = q'_*\sigma'(\widetilde{X\times X}) -\pi^*\delta_X.$$ 
%\indent On the other hand, we have:
%$$\begin{tabular}{llll}
%$c_1(q'_*(\pi'^*\mathcal K^\vee)\otimes f^*\mathcal O_{\mathbb P^{n+1}}(1)))$ &$=$ &$2f^*c_1(\mathcal O_{\mathbb P^{n+1}}(1)) + c_1(q'_*\pi'^*\mathcal K^\vee)$\\
%$ $ &$=$ &$2f^*H + \pi^*c_1(q_*\mathcal K^\vee)\ \mathrm{since}\ q\ \mathrm{and}\ \pi\ \mathrm{are\ proper\ and\ flat}$\\
%$ $ &$=$ &$2f^*H + \pi^*(q_*c_1(\mathcal K^\vee) - \delta_X)\ \mathrm{applying\ GRR\ to}\ q$\\
%$ $ &$=$ &$2f^*H + \pi^*[q_*(\tau^*pr_1^*c_1(\mathcal O_{X}(1)) - q^*c_1(\varphi^*\mathcal E_2)) - \delta_X]\ \mathrm{using}\ (\ref{ex_seq_def_K})$
%\end{tabular}$$

As a linear form on $\mathbb P^1$ is determined by its value on a length $2$ subscheme, the evaluation morphism yields an isomorphism of sheaves:
\begin{equation}\varphi^*\mathcal E_2\simeq q_*\tau^*pr_1^*\mathcal O_{X}(1),
\label{isom_taut}
\end{equation}
so that, using Grothendieck-Riemann-Roch theorem for $q$, we have the equality $c_1(\varphi^*\mathcal E_2)= q_*c_1(\tau^*pr^*_1\mathcal O_{X}(1)) - \delta_X$. We end the computation of $[\sigma(\widetilde{X\times X})]$ as follows:
$$\begin{tabular}{llll}
$[\sigma(\widetilde{X\times X})]$ &$=$ &$2f^*H + \pi^*[q_*\tau^*pr_1^*c_1(\mathcal O_X(1)) - 2c_1(\varphi^*\mathcal E_2)]$\\
$ $ &$=$ &$2f^*H + \pi^*[(1-2)q_*\tau^*pr_1^*c_1(\mathcal O_X(1)) - 2\delta_X]$
\end{tabular}$$

%$$\begin{tabular}{llll}
%$c_1(q'_*(\pi'^*\mathcal K^\vee\otimes q'^*f^*\mathcal O_{\mathbb P^{n+1}}(1)))$ &$=$ &$2f^*H + \pi^*[q_*\tau^*pr_1^*c_1(\mathcal O_{X}(1)) - 2c_1(\varphi^*\mathcal E_2) - \delta_X]$\\
%$ $ &$=$ &$2f^*H + \pi^*[(1-2)q_*\tau^*pr_1^*c_1(\mathcal O_{X}(1)) + (2-1) \delta_X]$
%\end{tabular}$$
%and $q'_*\sigma'(\widetilde{X\times X})=c_1(q'_*(\pi'^*\mathcal K^\vee\otimes q'^*f^*\mathcal O_{\mathbb P^{n+1}}(1))) + \pi^*\delta_X$\\

\indent Now, we have $R=[f^{-1}(X)]-[\sigma(\widetilde{X\times X})]=3f^*H-[\sigma(\widetilde{X\times X})]=f^*H+\pi^*(q_*\tau^*pr_1^*H_{X} -2\delta_X)$ so that by projection formula $\pi_*\mathcal O_{\mathbb P(\varphi^*\mathcal E_2)}(R)\simeq \varphi^*\mathcal E_2\otimes \mathcal O_{X^{[2]}}(q_*\tau^*pr_1^*H_{X} -2\delta_X)$. Letting $s_R\in |\mathcal O_{\mathbb P(\varphi^*\mathcal E_2)}(R)|$ be a section whose zero locus is equal to $R$, we can consider $s_R$ as a section of the rank $2$-vector bundle $\pi_*\mathcal O_{\mathbb P(\varphi^*\mathcal E_2)}(R)$. Then the zero locus of this section corresponds to length $2$ subschemes whose associated line is contained in $X$ that is to $P_2$. So the class $P_2$ in $\mathrm{CH}^2(X^{[2]})$ is $c_2(\pi^*\mathcal O_{\mathbb P(\varphi^*\mathcal E_2)}(R))$.\\
\indent Let $U\simeq Spec(A)$ be an affine open subset of $X^{[2]}$ such that $\mathbb P(\varphi^*\mathcal E_2)_{|U}\simeq \mathbb P^1_A$. Denoting $[Y_0:Y_1]$ the homogeneous (relative) coordinates on $\mathbb P^1_A$, the equation $s_{R}$ of $R_{|U}\subset \mathbb P^1_A$, is of the form $f_0Y_0+f_1Y_1=0$, where $f_0,f_1\in A$, since $R\in |\mathcal O_{\mathbb P(\varphi^*\mathcal E_2)}(1)\otimes \pi^*\mathcal O_{X^{[2]}}(q_*\tau^*pr_1^*H_{X} -2\delta_X)|$. Then the section $s_R$ of $(\pi_*\mathcal O_{\mathbb P(\varphi^*\mathcal E_2)}(R))_{|U}$ is $(f_0,f_1)$. As $P_2$ is the zero locus of $s_R$, the ideal of $P_2\cap U$ in $U$ is generated by $(f_0,f_1)$ and as $P_2$ is smooth of codimension $2$, $(f_0, f_1)$ is a regular sequence in $A$. As $(f_0, f_1)$ is a regular sequence, the equation $f_0Y_0+f_1Y_1=0$ tells exactly that $R$ is the blow-up of $X^{[2]}$ along $P_2$ i.e. $R\simeq \widetilde{X^{[2]}}$.
\end{proof}

\indent The divisors $\widetilde{X^{[2]}}$, $\sigma(\widetilde{X\times X})$ and $f^*X=[f^{-1}(X)]$ can be considered as correspondences from $X^{[2]}$ to $X$. The following fiber square:
$$\xymatrix{ f^{-1}(X)\ar@{^{(}->}[d]^{i'_X}\ar[r]^{f'} &X\ar@{^{(}->}[d]^{i_X}\\
 \mathbb P(\varphi^*\mathcal E_2) \ar[d]^{\pi}\ar[r]^f &\mathbb P^{n+1}\\
 X^{[2]} &}$$
yields the following easy lemma:
\begin{lemme}\label{lem_fact_proj_space} The action $[f^{*}(X)]_*:\mathrm{CH}^*(X^{[2]})\rightarrow \mathrm{CH}^*(X)$ factors through $\mathrm{CH}^*(\mathbb P^{n+1})$ i.e. for any $z\in \mathrm{CH}^i(X^{[2]})$, there is an integer $m_z\in \mathbb Z$ such that $[f^{-1}(X)]_*z=m_zH^{n+i-2d}_{X}$.
\end{lemme}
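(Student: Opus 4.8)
The plan is to read the action of the correspondence $[f^{-1}(X)]$ directly off the Cartesian square above, and then to reduce the whole statement to the trivial description of the Chow groups of $\mathbb{P}^{n+1}$. First I would unwind the definition of the correspondence action. Regarded as a correspondence from $X^{[2]}$ to $X$, the divisor $f^{-1}(X)\subset\mathbb{P}(\varphi^{*}\mathcal{E}_{2})$ has for its two projections the maps $\pi\circ i'_{X}\colon f^{-1}(X)\to X^{[2]}$ and $f'\colon f^{-1}(X)\to X$. Since $X$ is smooth, $X^{[2]}$ is smooth, hence so is the projective bundle $\mathbb{P}(\varphi^{*}\mathcal{E}_{2})$; thus $f^{-1}(X)=f^{*}X$ is a Cartier divisor in a smooth variety, the Gysin restriction $(i'_{X})^{*}$ is defined, and for $z\in\mathrm{CH}^{i}(X^{[2]})$ one has
\[
[f^{-1}(X)]_{*}z \;=\; f'_{*}\bigl((i'_{X})^{*}\pi^{*}z\bigr).
\]

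The key step is then to commute the proper push-forward $f_{*}$ past this Gysin pull-back. In the Cartesian square, $i_{X}\colon X\hookrightarrow\mathbb{P}^{n+1}$ is a regular embedding of codimension $1$ and $f$ is proper, so the compatibility of refined Gysin maps with proper push-forward (\cite[Theorem 6.2]{Fulton}) applies; moreover, because $f^{-1}(X)=f^{*}X$ is a genuine Cartier divisor its conormal sheaf is the pull-back along $f'$ of the conormal sheaf of $X$ in $\mathbb{P}^{n+1}$, so the square carries no excess conormal bundle and that compatibility takes the clean form $i_{X}^{*}\circ f_{*}=f'_{*}\circ(i'_{X})^{*}$ on $\mathrm{CH}^{*}(\mathbb{P}(\varphi^{*}\mathcal{E}_{2}))$. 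Feeding in the class $\pi^{*}z$ we get
\[
[f^{-1}(X)]_{*}z \;=\; f'_{*}\bigl((i'_{X})^{*}\pi^{*}z\bigr) \;=\; i_{X}^{*}\bigl(f_{*}\pi^{*}z\bigr),
\]
and the right-hand side exhibits $[f^{-1}(X)]_{*}$ as the composite $i_{X}^{*}\circ f_{*}\circ\pi^{*}$, so it factors through $\mathrm{CH}^{*}(\mathbb{P}^{n+1})$.

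To finish, recall $\mathrm{CH}^{*}(\mathbb{P}^{n+1})=\mathbb{Z}[H]/(H^{n+2})$ is free over $\mathbb{Z}$ on the powers of $H$; a dimension count pins down the codimension of $f_{*}\pi^{*}z$, which is therefore of the form $m_{z}H^{n+i-2d}$ for a unique integer $m_{z}$ (with $m_{z}=0$ whenever that exponent leaves the range $0,\dots ,n+1$). Applying $i_{X}^{*}$ and using $i_{X}^{*}H=H_{X}$ gives $[f^{-1}(X)]_{*}z=m_{z}H_{X}^{n+i-2d}$, as required. The only delicate point is the middle step: $f$ is not flat --- its fibres jump in dimension over the lines contained in $X$ --- so one genuinely needs the refined Gysin formalism rather than naive base change, together with the (automatic) vanishing of the excess conormal bundle coming from the fact that $f^{-1}(X)$ is the pull-back of a Cartier divisor. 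Everything else is routine bookkeeping.
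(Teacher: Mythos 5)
Your argument is exactly the one the paper has in mind: the displayed Cartesian square lets you write $[f^{-1}(X)]_{*}z=f'_{*}\bigl((i'_{X})^{*}\pi^{*}z\bigr)=i_{X}^{*}\bigl(f_{*}\pi^{*}z\bigr)$ via push-pull compatibility of the (refined) Gysin map for the divisor $f^{-1}(X)=f^{*}X$ (no excess bundle since the codimensions match), and then $\mathrm{CH}^{*}(\mathbb P^{n+1})=\mathbb Z[H]/(H^{n+2})$ forces the class to be an integer multiple of a power of $H_{X}$. This is precisely the content the paper leaves implicit when it calls the lemma ``easy,'' so your proposal is correct and follows the same route, with the refined-Gysin bookkeeping (needed because $f$ is not flat) spelled out.
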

\indent By Lemma \ref{lem_fact_proj_space}, $\left[\widetilde{X^{[2]}}\right]_*+[\sigma(\widetilde{X\times X})]_*:\mathrm{CH}^*(X^{[2]})\rightarrow \mathrm{CH}^*(X)$ factors through $\mathrm{CH}^*(\mathbb P^{n+1}_k)$. As $[\sigma(\widetilde{X\times X})]$ is tautological, we can compute the action of $\left[\widetilde{X^{[2]}}\right]$ modulo cycles coming from $\mathbb P^{n+1}_k$. We now have to find a suitable relation on which we can use the action of $\left[\widetilde{X^{[2]}}\right]$.

\begin{lemme}\label{lem_rel_divv} We have the following equality in $\mathrm{CH}^1(\widetilde{X^{[2]}})$:
\begin{equation}\label{eq_rel_div_fin} (f^*H)_{|\widetilde{X^{[2]}}} = 2\pi_{|\widetilde{X^{[2]}}}^*q_*\tau^*pr_1^*H_{X} - 3\pi_{|\widetilde{X^{[2]}}}^*\delta_X - E_{P_2}.
\end{equation}
\end{lemme}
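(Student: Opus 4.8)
The plan is to read off (\ref{eq_rel_div_fin}) by regarding $\widetilde{X^{[2]}}$ as a smooth divisor inside $\mathbb P(\varphi^*\mathcal E_2)$, computing its conormal/normal class in two ways — by adjunction from the divisor class (\ref{eq_class_blow_up}), and, over the locus where $\widetilde{X^{[2]}}$ is a section of $\pi$, via the relative tangent bundle of the $\mathbb P^1$-bundle $\pi$ — and then fixing the remaining coefficient by a single test curve in the exceptional divisor.

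Write $\xi := c_1(\mathcal O_{\mathbb P(\varphi^*\mathcal E_2)}(1)) = f^*H$ and $\chi := \pi_{|\widetilde{X^{[2]}}}$. Since $\widetilde{X^{[2]}}$ is a smooth Cartier divisor in the smooth variety $\mathbb P(\varphi^*\mathcal E_2)$, adjunction together with (\ref{eq_class_blow_up}) gives
\[
c_1\!\left(N_{\widetilde{X^{[2]}}/\mathbb P(\varphi^*\mathcal E_2)}\right) = \xi_{|\widetilde{X^{[2]}}} + \chi^*\!\left(q_*\tau^*pr_1^*H_X - 2\delta_X\right).
\]
On the other hand, over $U_0 := X^{[2]}\setminus P_2$ the divisor $\widetilde{X^{[2]}}$ is a section of $\pi$ (this is exactly how it is produced in the proof of Proposition \ref{prop_taut_is_res}: it is the closure of the graph of the residual morphism $r$, which is defined away from $P_2$ and lands in the fibre of $\pi$ over its source), so over $\chi^{-1}(U_0)$ its normal bundle coincides with $T_{\mathbb P(\varphi^*\mathcal E_2)/X^{[2]}}$. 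Taking first Chern classes in the pull-back to $\mathbb P(\varphi^*\mathcal E_2)$ of the relative Euler sequence (\ref{ex_seq_rel}) yields $c_1(T_{\mathbb P(\varphi^*\mathcal E_2)/X^{[2]}}) = 2\xi - \pi^*c_1(\varphi^*\mathcal E_2)$. Comparing the two expressions for $c_1(N_{\widetilde{X^{[2]}}/\mathbb P(\varphi^*\mathcal E_2)})$ over $\chi^{-1}(U_0)$ and substituting $c_1(\varphi^*\mathcal E_2) = q_*\tau^*pr_1^*H_X - \delta_X$ (established in the proof of Proposition \ref{prop_taut_is_res}), one gets $\xi_{|\widetilde{X^{[2]}}} = \chi^*(2q_*\tau^*pr_1^*H_X - 3\delta_X)$ on $\chi^{-1}(U_0)$. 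As $\widetilde{X^{[2]}}$ is the blow-up of $X^{[2]}$ along the smooth codimension-$2$ subvariety $P_2$, one has $\mathrm{Pic}(\widetilde{X^{[2]}}) = \chi^*\mathrm{Pic}(X^{[2]})\oplus\mathbb Z\,E_{P_2}$, so this identity propagates to a global one up to a multiple of $E_{P_2}$:
\[
\xi_{|\widetilde{X^{[2]}}} = \chi^*(2q_*\tau^*pr_1^*H_X - 3\delta_X) + m\,E_{P_2}, \qquad m\in\mathbb Z.
\]

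Finally I would determine $m$ by intersecting with a line $\ell_0\cong\mathbb P^1$ contained in a fibre of $E_{P_2}\to P_2$. On such a curve $\chi^*(\,\cdot\,)\cdot\ell_0 = 0$, and $E_{P_2}\cdot\ell_0 = \deg\,\mathcal O_{\mathbb P(N_{P_2/X^{[2]}})}(-1)_{|\ell_0} = -1$ because $P_2$ has codimension $2$. It remains to see that $\xi\cdot\ell_0 = 1$: the section $s_R$ cutting out $\widetilde{X^{[2]}}$ in $\mathbb P(\varphi^*\mathcal E_2)$ vanishes identically along $P_2$ (that is precisely how $P_2$ is identified as its zero scheme in the proof of Proposition \ref{prop_taut_is_res}), so $\widetilde{X^{[2]}}\supseteq \pi^{-1}(P_2)$; since $E_{P_2} = \chi^{-1}(P_2) = \widetilde{X^{[2]}}\cap\pi^{-1}(P_2)$, this forces $E_{P_2} = \pi^{-1}(P_2) = \mathbb P(\varphi^*\mathcal E_2)_{|P_2}$, so $\ell_0$ is an honest fibre of $\pi$ and $\mathcal O_{\mathbb P(\varphi^*\mathcal E_2)}(1)$ has degree $1$ on it. Hence $1 = -m$, i.e. $m = -1$, and (\ref{eq_rel_div_fin}) follows after recalling $\xi = f^*H$ and $\chi = \pi_{|\widetilde{X^{[2]}}}$. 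The step I expect to cost the most care is the normalization $\xi\cdot\ell_0 = 1$, i.e. the scheme-theoretic identification $E_{P_2} = \mathbb P(\varphi^*\mathcal E_2)_{|P_2}$; it can alternatively be obtained by a direct local computation on the charts $\{f_0Y_0 + f_1Y_1 = 0\}$ appearing in the proof of Proposition \ref{prop_taut_is_res}, provided one carefully tracks the twist by $\varphi^*\mathcal E_2$ when gluing the local trivialisations of $\mathcal O_{\mathbb P(\varphi^*\mathcal E_2)}(1)$.
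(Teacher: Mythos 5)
Your argument is correct, and it proves the stated formula; but it reaches the coefficient of $E_{P_2}$ by a different mechanism than the paper. The paper's proof is a short canonical-class computation: it compares $K_{\widetilde{X^{[2]}}}=\pi^*_{|\widetilde{X^{[2]}}}K_{X^{[2]}}+E_{P_2}$ (the discrepancy formula for the blow-up along the smooth codimension-$2$ centre $P_2$) with the adjunction formula $K_{\widetilde{X^{[2]}}}=(K_{\mathbb P(\varphi^*\mathcal E_2)}+\widetilde{X^{[2]}})_{|\widetilde{X^{[2]}}}$, so that $E_{P_2}=(K_{\mathbb P(\varphi^*\mathcal E_2)/X^{[2]}}+\widetilde{X^{[2]}})_{|\widetilde{X^{[2]}}}$, and then substitutes $K_{\mathbb P(\varphi^*\mathcal E_2)/X^{[2]}}=-2f^*H+\pi^*(q_*\tau^*pr_1^*H_X-\delta_X)$ together with the class (\ref{eq_class_blow_up}); the $E_{P_2}$-term thus comes for free from the standard blow-up formula, with no need to work on an open locus or normalize against a test curve. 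You instead restrict the class (\ref{eq_class_blow_up}) to $\widetilde{X^{[2]}}$ (i.e.\ compute the normal bundle), identify it over $X^{[2]}\setminus P_2$ with $T_{\mathbb P(\varphi^*\mathcal E_2)/X^{[2]}}$ using that $\widetilde{X^{[2]}}$ is there a section of $\pi$, then invoke the localization sequence to reduce to a single integer $m$, and pin it down by intersecting with a line in a fibre of $E_{P_2}\to P_2$. This works: the section-normal-bundle identification, the values $\chi^*(\cdot)\cdot\ell_0=0$, $E_{P_2}\cdot\ell_0=-1$, and $\xi\cdot\ell_0=1$ are all correct, the last one resting on the identification $E_{P_2}=\pi^{-1}(P_2)=\mathbb P(\varphi^*\mathcal E_2)_{|P_2}$, which indeed follows from the local equation $f_0Y_0+f_1Y_1=0$ established in the proof of Proposition \ref{prop_taut_is_res} (over $P_2$ both $f_0,f_1$ vanish, so the whole fibre lies in $\widetilde{X^{[2]}}$ and the exceptional divisor is the full restricted $\mathbb P^1$-bundle). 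The trade-off is that your route is longer and hinges on that scheme-theoretic identification, which is the delicate point you yourself flag, whereas the paper's two-line canonical-bundle comparison encodes exactly the same information in the discrepancy $+E_{P_2}$; on the other hand your argument makes the geometry of $E_{P_2}$ explicit and uses only $c_1$-level data away from $P_2$, which some readers may find more transparent.
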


%\textit{}\\
\begin{proof} Using that $\pi_{|\widetilde{X^{[2]}}}$ is a blow-up, we have $K_{\widetilde{X^{[2]}}}=\pi^*_{|\widetilde{X^{[2]}}}K_{X^{[2]}} + E_{P_2}$. Secondly, adjunction formula gives $K_{\widetilde{X^{[2]}}}=(K_{\mathbb P(\varphi^*\mathcal E_2)}+\widetilde{X^{[2]}})_{|\widetilde{X^{[2]}}}$. As $K_{\mathbb P(\varphi^*\mathcal E_2)}=\pi^*K_{X^{[2]}}+K_{\mathbb P(\varphi^*\mathcal E_2)/X^{[2]}}$, clearing $\pi^*K_{X^{[2]}}$, we get
\begin{equation}\label{eq_div}
E_{P_2}=(K_{\mathbb P(\varphi^*\mathcal E_2)/X^{[2]}}+\widetilde{X^{[2]}})_{|\widetilde{X^{[2]}}}
\end{equation}
\indent Using formulas for projective bundle, we have $$K_{\mathbb P(\varphi^*\mathcal E_2)/X^{[2]}}=-2c_1(\mathcal O_{\mathbb P(\varphi^*\mathcal E_2)}(1)) +\pi^*c_1(\varphi^*\mathcal E_2) = -2f^*H + \pi^*(q_*\tau^*pr^*_1c_1(\mathcal O_{X}(1)) - \delta_X).$$ 
\indent Then, (\ref{eq_div}) yields $$E_{P_2}=(-f^*H + 2\pi^*q_*\tau^*pr^*_1H_{X} -3\pi^*\delta_X)_{|\widetilde{X^{[2]}}}$$
\end{proof}

\begin{proof}[Proof of Theorem \ref{thm_form_single}] Let $i_{\Sigma}:\Sigma\hookrightarrow X$ be a smooth subvariety of $X$ of dimension $d$. Then we have the description of $\Sigma^{[2]}$ as the quotient of the blow-up $\widetilde{\Sigma\times\Sigma}$ of $\Sigma\times \Sigma$ along the diagonal by the involution. The class of $\widetilde{\Sigma\times\Sigma}$, which is the strict transform of $\Sigma\times\Sigma$ under $\tau$, in $\mathrm{CH}_{2d}(\widetilde{X\times X})$ is given by the excess formula (\cite[Theorem 6.7 and Corollary 4.2.1]{Fulton}):
\begin{equation}\label{eq_excess_blow_up} \widetilde{\Sigma\times \Sigma} = \tau^*(\Sigma\times \Sigma) - j_{E_X,*}\{c(\tau^*_{|E_X}T_X)(1+E_{X|E_X})^{-1}\cdot \tau^*_{|E_X}i_{\Sigma,*}(c(T_\Sigma)^{-1})\}_{2d}\ \mathrm{in\ CH}_{2d}(\widetilde{X\times X})
\end{equation}
We recall from (\ref{isom_taut}) that $c_1(\varphi^*\mathcal E_2)=q_*\tau^*pr_1^*H_{X} -\delta_X$. Intersecting (\ref{eq_rel_div_fin}) with $\pi_{|\widetilde{\Sigma^{[2]}}}^*(\Sigma^{[2]}\cdot c_1(\varphi^*\mathcal E_2)^{d-1})$ and projecting to $X$, we get in $\mathrm{CH}_d(X)$: \begin{equation}\label{eq_pres_proj} \begin{aligned} H_X\cdot \left[\widetilde{X^{[2]}}\right]_*(\Sigma^{[2]}\cdot c_1(\varphi^*\mathcal E_2)^{d-1})= \left[\widetilde{X^{[2]}}\right]_*\left[(2q_*\tau^*pr_1^*H_{X} - 3\delta_X)\cdot (\Sigma^{[2]}\cdot c_1(\varphi^*\mathcal E_2)^{d-1})\right] \\
-f'_{|\widetilde{X^{[2]}},*}(E_{P_2}\cdot\pi_{|\widetilde{X^{[2]}}}^*(\Sigma^{[2]}\cdot c_1(\varphi^*\mathcal E_2)^{d-1}))
\end{aligned}
\end{equation} 
To simplify this expression, we use the following lemma:
\begin{lemme}\label{lem_form_int} We have the following formulas (by induction): \\
\indent (i) for $k\geq 1$, $(q_*\tau^*pr_1^*H_{X})^k=\sum_{j=0}^{k-1} \binom {k-1} {j}q_*\tau^*(pr_1^*H^{k-j}_{X}\cdot pr_2^*H^j_{X})$;\\
\indent (ii) for $k, k'\geq 0$ and $m\geq 1$, $$q_*\tau^*(pr_1^*H^{k}_{X}\cdot pr_2^*H^{k'}_{X})\cdot \delta_X^m = q_*j_{E_X,*}[\tau_{|E_X}^*i_{\Delta_{X}}^*(pr_1^*H^{k}_{X}\cdot pr_2^*H^{k'}_{X})\cdot (E_{X|E_X})^{m-1}]$$ where $j_{E_X}:E_X\hookrightarrow \widetilde{X\times X}$ is the inclusion of the exceptional divisor, $i_{\Delta_{X}}:\Delta_{X}\hookrightarrow X\times X$ is the inclusion of the diagonal (so that $i_{\Delta_{X}}^*(pr_1^*H^{k}_{X}\cdot pr_2^*H^{k'}_{X})$ is the hyperplane section $H_{X}^{k+k'}$ on $X\simeq \Delta_{X}$) and $c_1(\mathcal O_{E_X}(-1))\simeq E_{X|E_X}$ is the tautological line bundle of the projective bundle $\tau_{|E_X}:E_X\rightarrow \Delta_{X}$.\\
\indent (iii) it follows that for $m\geq 2$, $$\begin{aligned}&c_1(\varphi^*\mathcal E_2)^m=\sum_{l=0}^{m-1}\binom{m-1}{l}q_*\tau^*(pr_1^*H_{X}^{m-l}\cdot pr_2^*H_{X}^l))  + (-1)^m\delta_X^m\\ &+ \sum_{k=1}^{m-1}\sum_{l=0}^{m-1-k}(-1)^k\binom{m}{k}\binom{m-1-k}{l}q_*j_{E_X,*}(\tau_{|E_X}^*i_{\Delta_X}^*(pr_1^*H_{X}^{m-k-l}\cdot pr_2^*H_{X}^l)\cdot E_{X|E_X}^{k-1})\end{aligned}$$
\end{lemme}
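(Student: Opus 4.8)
All three formulas are purely formal and rest only on the structure of the double cover $q:\widetilde{X\times X}\rightarrow X^{[2]}$ and of the blow-up $\tau:\widetilde{X\times X}\rightarrow X\times X$, together with the identity $c_1(\varphi^*\mathcal E_2)=q_*\tau^*pr_1^*H_X-\delta_X$ already established above using (\ref{isom_taut}). The first thing I would do is collect the elementary relations needed. Write $\iota$ for the involution of $\widetilde{X\times X}$ over $X^{[2]}$: it is induced by the exchange of the two factors of $X\times X$, so $\iota^*\tau^*pr_1^*H_X=\tau^*pr_2^*H_X$, it acts trivially on $E_X$, and $q$ is a double cover ramified exactly along $q(E_X)$. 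Since $[q(E_X)]=2\delta_X$ and $q_{|E_X}$ is an isomorphism onto its image, pulling back gives $q^*\delta_X=E_X$, and, $q$ being the quotient by $\iota$, one has $q^*q_*\alpha=\alpha+\iota^*\alpha$ for every $\alpha\in\mathrm{CH}^*(\widetilde{X\times X})$; in particular $q^*q_*(\tau^*pr_1^*H_X)=\tau^*(pr_1^*H_X+pr_2^*H_X)$.

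For $(i)$ I would argue by induction on $k$, the case $k=1$ being trivial. For the inductive step, the projection formula for $q$ combined with the last identity gives, for all $a,b\ge 0$,
\[
(q_*\tau^*pr_1^*H_X)\cdot q_*\tau^*(pr_1^*H_X^{a}\cdot pr_2^*H_X^{b})=q_*\tau^*(pr_1^*H_X^{a+1}\cdot pr_2^*H_X^{b})+q_*\tau^*(pr_1^*H_X^{a}\cdot pr_2^*H_X^{b+1}).
\]
Multiplying the inductive expression for $(q_*\tau^*pr_1^*H_X)^{k}$ by $q_*\tau^*pr_1^*H_X$ and reindexing, the coefficient of $q_*\tau^*(pr_1^*H_X^{k+1-l}\cdot pr_2^*H_X^{l})$ becomes $\binom{k-1}{l}+\binom{k-1}{l-1}=\binom{k}{l}$ (the extreme values $l=0$ and $l=k$ being checked separately), which is the asserted formula for $k+1$.

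For $(ii)$ I would use the projection formula and $q^*\delta_X=E_X$ to rewrite the left-hand side as $q_*\bigl(\tau^*(pr_1^*H_X^{k}\cdot pr_2^*H_X^{k'})\cdot E_X^{m}\bigr)$; then the self-intersection formula $\beta\cdot E_X=j_{E_X,*}j_{E_X}^*\beta$ and $j_{E_X}^*E_X=E_{X|E_X}$ turn this into $q_*j_{E_X,*}\bigl(j_{E_X}^*\tau^*(pr_1^*H_X^{k}\cdot pr_2^*H_X^{k'})\cdot E_{X|E_X}^{m-1}\bigr)$, and finally $\tau\circ j_{E_X}=i_{\Delta_X}\circ\tau_{|E_X}$ replaces $j_{E_X}^*\tau^*$ by $\tau_{|E_X}^*i_{\Delta_X}^*$, which is the right-hand side (for $m=1$ this is just the self-intersection formula). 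Then $(iii)$ follows by expanding $c_1(\varphi^*\mathcal E_2)^m=(q_*\tau^*pr_1^*H_X-\delta_X)^m$ with the binomial theorem: the $\delta_X^{0}$-term is computed by $(i)$ and gives the first sum, the $\delta_X^{m}$-term is the summand $(-1)^m\delta_X^{m}$, and for $1\le k\le m-1$ one applies $(i)$ to $(q_*\tau^*pr_1^*H_X)^{m-k}$ and then $(ii)$ to each resulting term multiplied by $\delta_X^{k}$, collecting the binomial coefficients into the double sum.

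I do not expect a real obstacle here: the whole lemma is bookkeeping once the relations of the first paragraph are available. The two points that need a little care are the ramification identity $q^*\delta_X=E_X$ (which uses that $q$ is ramified exactly along $q(E_X)$, together with the absence of $2$-torsion in $\mathrm{Pic}(\widetilde{X\times X})$, so that $2q^*\delta_X=2E_X$ forces $q^*\delta_X=E_X$) and the reindexing of the binomial sums, in particular matching Pascal's rule in step $(i)$ and isolating the extreme terms $k=0$ and $k=m$ in $(iii)$.
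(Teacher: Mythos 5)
Your proof is correct, and it is exactly the routine verification the paper has in mind: the lemma is stated with no written proof beyond the parenthetical ``by induction'', and your argument (induction via the projection formula and $q^*q_*D=D+\iota^*D$ for (i), the identities $q^*\delta_X=E_X$ and $E_X\cdot z=j_{E_X,*}j_{E_X}^*z$ for (ii), and the binomial expansion of $(q_*\tau^*pr_1^*H_X-\delta_X)^m$ for (iii)) is the natural bookkeeping that fills it in. The care you take over $q^*\delta_X=E_X$ (torsion-freeness of $\mathrm{Pic}(\widetilde{X\times X})$) and over the extreme terms in the binomial reindexing is appropriate and consistent with the paper's conventions.
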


In order to establish (\ref{eq_inv_form_thm}), let us now compute the different terms of (\ref{eq_pres_proj}) modulo cycles coming from $\mathbb P^{n+1}$, using the correspondence $\sigma(\widetilde{X\times X})$. We recall that by construction, $[\sigma(\widetilde{X\times X})]_*(\cdot)=f'_{|\sigma(\widetilde{X\times X}),*}q^*(\cdot)$ and we have: 
$$\begin{tabular}{llll}
$q^*(\Sigma^{[2]}\cdot c_1(\varphi^*\mathcal E_2)^{d-1})$ &$=$ &$\widetilde{\Sigma\times\Sigma}\cdot [\sum_{l=0}^{d-2}\binom{d-2}{l}\tau^*(pr_1^*H_{X}^{d-1-l}\cdot pr_2^*H_{X}^l+pr_1^*H_{X}^{l}\cdot pr_2^*H_{X}^{d-1-l}) $\\
$ $ &$ $ &$+ (-1)^{d-1}E_X^{d-1} + \sum_{k=1}^{d-2}\sum_{l=0}^{d-2-k}(-1)^k\binom{d-1}{k}\binom{d-2-k}{l}j_{E_X,*}(\tau_{|E_X}^*H_X^{d-1-k}\cdot E_{X|E_X}^{k-1})]$
\end{tabular}$$
\textit{}\\
\indent The different terms are computed using the equalities:\\
\indent (i) for $m,m'\geq 0$, $\tau^*(\Sigma\times \Sigma)\cdot \tau^*(pr_1^*H_X^{m}\cdot pr_2^*H_X^{m'})=\tau^*((\Sigma\cap H_X^{m})\times (\Sigma\cap H_X^{m'}))$ and its image in $X$ under $f'_{|\sigma(\widetilde{X\times X}),*}$ is supported on $(\Sigma\cap H_X^{m})$.\\

\indent (ii) for $m\geq 0$, $\tau^*(\Sigma\times \Sigma)\cdot j_{E_X,*}(E_{X|E_X}^l\cdot\tau_{|E_X}^* H_X^{m})=j_{E_X,*}(E_{X|E_X}^l\cdot \tau_{|E_X}^*(\Sigma^2\cdot H_X^{m}))$ and its image in $X$ under $f'_{|\sigma(\widetilde{X\times X}),*}$ is supported on $\Sigma^2\cap H_X^{m}$.\\

\indent (iii) for $m>0$, $\tau^*(\Sigma\times \Sigma)\cdot E_X^m = j_{E_X,*}(E_{X|E_X}^{m-1}\cdot \tau_{|E_X}^*\Sigma^2)$ and its image in $X$ under $f'_{|\sigma(\widetilde{X\times X}),*}$ is supported on $\Sigma^2$.\\
\indent (iv) $j_{E_X,*}\{c(\tau^*_{|E_X}T_X)(1+E_{X|E_X})^{-1}\cdot \tau^*_{|E_X}i_{\Sigma,*}(c(T_\Sigma)^{-1})\}_{2d}\cdot \tau^*(pr_1^*H_X^{m}\cdot pr_2^*H_X^{m'})$\\
\indent \indent $ =j_{E_X,*}(\sum_{i=0}^{min(n-d,d)}(-1)^{n-i-1-d}E_{X|E_X}^{n-i-1-d}\cdot \tau_{|E_X}^*c_i(N_{\Sigma/X})))\cdot \tau^*(pr_1^*H_X^{m}\cdot pr_2^*H_X^{m'})$\\
\indent \indent $= 2j_{E_X,*}\sum_{i=0}^{min(n-d,d)}(-1)^{n-i-1-d}E_{X|E_X}^{n-i-1-d}\cdot \tau_{|E_X}^*(c_i(N_{\Sigma/X})\cdot H_X^{m+m'})$ and its image in $X$ under $f'_{|\sigma(\widetilde{X\times X}),*}$ is supported on $\cup_i (c_i(N_{\Sigma/X})\cap H_X^{m+m'})$.\\

\indent (v) $j_{E_X,*}\{c(\tau^*_{|E_X}T_X)(1+E_{X|E_X})^{-1}\cdot \tau^*_{|E_X}i_{\Sigma,*}(c(T_\Sigma)^{-1})\}_{2d}\cdot E_X^{m}$\\
\indent \indent $= j_{E_X,*}(\sum_{i=0}^{min(n-d,d)}(-1)^{n-i-1-d}E_{X|E_X}^{m+n-i-1-d}\cdot \tau_{|E_X}^*c_i(N_{\Sigma/X}))$ and its image in $X$ under $f'_{|\sigma(\widetilde{X\times X}),*}$ is supported on $\cup_i c_i(N_{\Sigma/X})$.\\

\indent (vi)$j_{E_X,*}\{c(\tau^*_{|E_X}T_X)(1+E_{X|E_X})^{-1}\cdot \tau^*_{|E_X}i_{\Sigma,*}(c(T_\Sigma)^{-1})\}_{2d}\cdot j_{E_X,*}(E_{X|E_X}^{k-1}\cdot \tau_{|E_X}^*H_X^{m})$\\
\indent \indent $=j_{E_X,*}\sum_{i=0}^{min(n-d,d)}(-1)^{n-i-1-d}E_{X|E_X}^{m+n-i-1-d}\cdot \tau_{|E_X}^*(c_i(N_{\Sigma/X})\cdot H_X^{m})$ and its image in $X$ under $f'_{|\sigma(\widetilde{X\times X}),*}$ is supported on $\cup_i (c_i(N_{\Sigma/X})\cap H_X^{m})$.\\ \\

With these formulas, we can see that:\\
\indent (1) We have $[\sigma(\widetilde{X\times X})]_*(\Sigma^{[2]}\cdot c_1(\varphi^*\mathcal E_2)^{d-1})= 0$ as its support in $X$ is the union of subvarieties of dimension $\leq d$ whereas $q^*(\Sigma^{[2]}\cdot c_1(\varphi^*\mathcal E_2)^{d-1})$ has dimension $d+1$. So $\left[\widetilde{X^{[2]}}\right]_*(\Sigma^{[2]}\cdot c_1(\varphi^*\mathcal E_2)^{d-1})\in \mathbb Z\cdot H_X^{n-d-1}$.\\ \\

\indent (2) We have $$[\sigma(\widetilde{X\times X})]_*(\Sigma^{[2]}\cdot c_1(\varphi^*\mathcal E_2)^{d})= f_{|\sigma(\widetilde{X\times X}),*}[\tau^*(\Sigma\times (\Sigma\cdot H_X^{d})) - j_{E_X,*}((-1)^{n-1}E_{X|E_X}^{n-1}\cdot \tau_{|E_X}^*c_0(N_{\Sigma/X})]$$ since all the other terms are supported on $\bigcup_{k,j,i,m}(\Sigma\cap H_X^{k})\cup (\Sigma^2\cap H_X^{j}) \cup (c_i(N_{\Sigma/X})\cap H_X^{m})$ with $k>0$, $j\geq 0$ and $m>0$ if $i=0$ and $m\geq 0$ else, which is a union of subschemes of dimension $<d$. So $[\sigma(\widetilde{X\times X})]_*(\Sigma^{[2]}\cdot c_1(\varphi^*\mathcal E_2)^{d})= deg(\Sigma)\Sigma - \Sigma$ in $\mathrm{CH}_d(X)$. Hence $\left[\widetilde{X^{[2]}}\right]_*(\Sigma^{[2]}\cdot c_1(\varphi^*\mathcal E_2)^{d})=-(deg(\Sigma)\Sigma -\Sigma)\ \mathrm{mod}\ \mathbb Z\cdot H_X^{n-d}$.\\ \\

\indent (3) Likewise $[\sigma(\widetilde{X\times X})]_*(\Sigma^{[2]}\cdot c_1(\varphi^*\mathcal E_2)^{d-1}\cdot \delta_X) = f_{|\sigma(\widetilde{X\times X}),*}((-1)^{n}E_{X|E_X}^{n-1}\tau_{|E_X}^*c_0(N_{\Sigma/X}))$ so that $\left[\widetilde{X^{[2]}}\right]_*(\Sigma^{[2]}\cdot c_1(\varphi^*\mathcal E_2)^{d-1})\cdot \delta_X)= \Sigma\ \mathrm{mod}\ \mathbb Z\cdot H_X^{n-d}$.\\ \\

\indent (4) For the last term, we have $$f'_{|\widetilde{X^{[2]}},*}(E_{P_2}\cdot\pi_{|\widetilde{X^{[2]}}}^*(\Sigma^{[2]}\cdot c_1(\varphi^*\mathcal E_2)^{d-1})) = P_*[p_{P_2,*}i_{P_2}^*(\Sigma^{[2]}\cdot c_1(\varphi^*\mathcal E_2)^{d-1})].$$
\end{proof}
\textit{}\\

\subsection{A digression on  the  Hilbert square of subvarieties} Assume $k=\mathbb C$. On one hand, as any smooth cubic hypersurface $X$ admits a unirational parametrization of degree $2$, any functorial birational invariant of $X$ is $2$-torsion and as the coefficient appearing with $\Sigma$ in the inversion formula of Theorem \ref{thm_form_single}, is odd, the formula will be useful to study birational invariants obtained as functorial subquotient of Chow groups. On the other hand, in the inversion formula, the operation $\Sigma\mapsto \Sigma^{[2]}$ plays a key role. So let us look at some properties of this operation.

\begin{proposition}\label{prop_hilb2} Let $Y$ be a smooth projective $k$-variety. Let $V,V'$ be smooth subvarieties of $Y$ of dimension $d<dim(Y)$ and $N>0$ an integer such that $N(i_{V,*}(c(V)^{-1})-i_{V',*}(c(V')^{-1})) =0$ in $\mathrm{CH}_*(Y)$ (resp. $\mathrm{CH}_*(Y)/alg$), where $i_V$ (resp. $i_{V'}$) is the inclusion of $V$ (resp. of $V'$) in $Y$.\\
\indent (i) Then $2N(V^{[2]}-V'^{[2]})=0$ in $\mathrm{CH}_{2d}(Y^{[2]})$ (resp. $\mathrm{CH}_{2d}(Y^{[2]})/alg$).\\
\indent (ii) Moreover if the groups $\mathrm{CH}_{i}(Y)$ are torsion-free for $i\leq 2d$, then $V^{[2]}=V'^{[2]}$ in $\mathrm{CH}_{2d}(Y^{[2]})$.
\end{proposition}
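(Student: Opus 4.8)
The plan is to pull everything back to the blow-up $\tau:\widetilde{Y\times Y}\to Y\times Y$ of $Y\times Y$ along the diagonal $\Delta_Y$, where the excess formula (\ref{eq_excess_blow_up}) computes strict transforms, and then to push down along the quotient map $q:\widetilde{Y\times Y}\to Y^{[2]}$. Recall that $q$ is a finite flat double cover (ramified along $q(E_Y)$), so $q_*q^*=2\cdot\mathrm{id}$ and $q^*q_*=\mathrm{id}+\iota^*$ on Chow groups, $\iota$ denoting the involution. Writing $\widetilde{V\times V}=\mathrm{Bl}_{\Delta_V}(V\times V)$ for the strict transform of $V\times V$ (which makes sense since $V\times V$ meets $\Delta_Y$ along the smooth $\Delta_V$), the map $q$ restricts to a degree $2$ morphism $\widetilde{V\times V}\to V^{[2]}$ which is étale over the generic point of $V^{[2]}$, whence $q^*[V^{[2]}]=[\widetilde{V\times V}]$ and $q_*[\widetilde{V\times V}]=2[V^{[2]}]$ (and likewise for $V'$). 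Finally, by (\ref{eq_excess_blow_up}) and the identity $[V\times V]=pr_1^*[V]\cdot pr_2^*[V]$,
\[
[\widetilde{V\times V}]=\tau^*(pr_1^*[V]\cdot pr_2^*[V])-j_{E_Y,*}\{c(\tau^*_{|E_Y}T_Y)(1+E_{Y|E_Y})^{-1}\cdot\tau^*_{|E_Y}i_{V,*}(c(T_V)^{-1})\}_{2d},
\]
so the right-hand side is the sum of a term bilinear in $[V]\in\mathrm{CH}_d(Y)$ and a term $\mathbb Z$-linear in $c_V:=i_{V,*}(c(T_V)^{-1})$, an element of $\bigoplus_{0\le j\le d}\mathrm{CH}_j(Y)$ whose top-degree part is $[V]$.

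For (i), note $2N(V^{[2]}-V'^{[2]})=q_*q^*(N(V^{[2]}-V'^{[2]}))=q_*(N([\widetilde{V\times V}]-[\widetilde{V'\times V'}]))$, so it suffices to show $N([\widetilde{V\times V}]-[\widetilde{V'\times V'}])=0$ in $\mathrm{CH}_{2d}(\widetilde{Y\times Y})$ (resp. modulo algebraic equivalence). The hypothesis reads $N(c_V-c_{V'})=0$, so multiplication by $N$ kills the linear term at once; in particular $N([V]-[V'])=0$. For the bilinear term, $pr_1^*[V]\cdot pr_2^*[V]-pr_1^*[V']\cdot pr_2^*[V']=pr_1^*[V]\cdot pr_2^*([V]-[V'])+pr_1^*([V]-[V'])\cdot pr_2^*[V']$, and since the $\mathrm{CH}^*(Y\times Y)$-module structure on $\mathrm{CH}_*(Y\times Y)$ is additive, multiplying by $N$ and using $N([V]-[V'])=0$ kills this too; applying $\tau^*$ preserves this. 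All the operations used ($\tau^*$, $j_{E_Y,*}$, $q_*$, intersection with Chern classes) are compatible with algebraic equivalence, so the "resp." version goes through verbatim. This proves $2N(V^{[2]}-V'^{[2]})=0$.

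For (ii), since $\mathrm{CH}_i(Y)$ is torsion-free for $i\le d$, the hypothesis upgrades to the exact equalities $[V]=[V']$ and $c_V=c_{V'}$; plugging these into the displayed formula gives $[\widetilde{V\times V}]=[\widetilde{V'\times V'}]$ in $\mathrm{CH}_{2d}(\widetilde{Y\times Y})$, i.e. $q^*(V^{[2]}-V'^{[2]})=0$. As $q_*q^*=2$, $\ker q^*$ is $2$-torsion, so it remains to prove that $\mathrm{CH}_{2d}(Y^{[2]})$ has no $2$-torsion; this is where the hypothesis is used in full range. The exceptional divisor $q(E_Y)\cong\mathbb P(T_Y)$ is a $\mathbb P^{n-1}$-bundle over $Y$ ($n=\dim Y$), so $\mathrm{CH}_{2d}(q(E_Y))=\bigoplus_{j=0}^{n-1}\mathrm{CH}_{2d-j}(Y)$ is torsion-free (here one needs all degrees $\le 2d$); combining the localization sequence for $Y^{[2]}\setminus q(E_Y)=(Y\times Y\setminus\Delta_Y)/\iota$, the blow-up formula for $\widetilde{Y\times Y}=\mathrm{Bl}_{\Delta_Y}(Y\times Y)$, and the transfer exact sequence of the étale double cover $Y\times Y\setminus\Delta_Y\to Y^{[2]}\setminus q(E_Y)$, one concludes that $\mathrm{CH}_{2d}(Y^{[2]})$ is torsion-free, so $q^*$ is injective and $V^{[2]}=V'^{[2]}$. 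I expect this last torsion-freeness statement — controlling the $2$-torsion that could a priori be created either in passing from $Y$ to $Y\times Y$ or in forming the quotient by $\iota$ — to be the main obstacle; everything else is formal manipulation of the excess formula and of the double cover $q$.
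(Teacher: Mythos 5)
Your part (i) is correct and is essentially the paper's argument: push the excess formula (\ref{eq_excess_blow_up}) down along the degree $2$ map $q$, handle the term $\tau^*(V\times V)$ by bilinearity using the top-dimensional part $N([V]-[V'])=0$ of the hypothesis, and handle the exceptional term by linearity in $i_{V,*}(c(T_V)^{-1})$. Your reduction of (ii) is also sound as far as it goes: torsion-freeness of $\mathrm{CH}_{i\le d}(Y)$ gives $[V]=[V']$ and $c_V=c_{V'}$ on the nose, hence $q^*(V^{[2]}-V'^{[2]})=[\widetilde{V\times V}]-[\widetilde{V'\times V'}]=0$, and since $q_*q^*=2$ the class $V^{[2]}-V'^{[2]}$ is $2$-torsion.

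The genuine gap is the final step: you need $\mathrm{CH}_{2d}(Y^{[2]})$ (or at least this particular class) to have no $2$-torsion, and the tools you invoke cannot deliver this. There is no integral ``transfer exact sequence'' for the \'etale double cover $Y\times Y\setminus\Delta_Y\to Y^{[2]}\setminus q(E_Y)$; the identities $q_*q^*=2$ and $q^*q_*=1+\iota^*$ control the quotient only after inverting $2$, which is exactly the prime at issue. Quotients by involutions can create $2$-torsion even when the cover has torsion-free Chow groups (compare $\mathrm{Pic}$ of an Enriques surface with its K3 cover), and the paper's own Proposition \ref{prop_ex_hilb2} produces nonzero $2$-torsion classes of the form $D_x^{[2]}-D_y^{[2]}$ in $\mathrm{CH}_2(X^{[2]})$, so $2$-torsion on Hilbert squares is a real phenomenon; moreover, even granting torsion-freeness of $\mathrm{CH}_{2d}$ of the open part, localization would only say that a torsion class comes from $\mathrm{CH}_{2d}(q(E_Y))$, not that it vanishes. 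The paper circumvents any global torsion-freeness claim for $Y^{[2]}$ by a different device: from $[V]=[V']$ in $\mathrm{CH}_d(Y)$ it deduces, via the divided-power structure on Chow groups of symmetric products (\cite[Proposition 1.4]{Moon_Poli}), the \emph{integral} equality $V^{(2)}=V'^{(2)}$ in $\mathrm{CH}_{2d}(Y^{(2)})$ — the point your symmetrization cannot reach, since naive arguments only give it after multiplication by $2$. Then, by localization along $Y^{[2]}\setminus q(E_Y)\simeq Y^{(2)}\setminus\Delta_Y$, the difference $V^{[2]}-V'^{[2]}$ is supported on the exceptional divisor $q(E_Y)\simeq \mathbb P(T_Y)$, and there the full hypothesis (torsion-freeness of $\mathrm{CH}_i(Y)$ for $i\le 2d$), together with part (i), flat pullback by $q$, the blow-up decomposition and the projective-bundle formula, kills the class. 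So the missing idea is precisely a mechanism (divided powers) that localizes the potential $2$-torsion onto a locus whose Chow groups are controlled by those of $Y$, rather than a torsion-freeness statement for $\mathrm{CH}_{2d}(Y^{[2]})$ itself, which remains unproved in your proposal and is doubtful in general.
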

\begin{proof} (i) Let us denote $\tau:\widetilde{Y\times Y}\rightarrow Y\times Y$ the blow-up of $Y\times Y$ along the diagonal $\Delta_X$ and $q:\widetilde{Y\times Y}\rightarrow Y^{[2]}$ the quotient by the involution. For a smooth subvariety $V\subset Y$, we have $q_*(\widetilde{V\times V}^{\Delta_V})=2V^{[2]}$ in $\mathrm{CH}_{2d}(Y^{[2]})$, where $\widetilde{V\times V}^{\Delta_V}$ is the blow-up of $V\times V$ along its diagonal $\Delta_V$, i.e. the proper transform of $V\times V$ under $\tau$. We recall (\cite[Theorem 6.7 and Corollary 4.2.1]{Fulton}) that we have
\begin{equation}\label{eq_strict_tr}
\widetilde{V\times V}^{\Delta_V} = \tau^*(V\times V) - j_{E_Y,*}\{c(\tau^*_{|E_Y}T_Y)c(E_{Y|E_Y})^{-1}\cdot \tau^*_{|E_Y}i_{V,*}(c(T_V)^{-1})\}_{2d}\ \mathrm{in\ CH}_{2d}(\widetilde{Y\times Y})
\end{equation}
where $j_{E_Y}:E_Y\hookrightarrow \widetilde{Y\times Y}$ is the exceptional divisor of $\tau$ and for an element $z\in \mathrm{CH}_*(\widetilde{Y\times Y})$, $\{z\}_k$ is the part of dimension $k$ of $z$.\\
\indent Now, if $N(V-V')=0$ in $\mathrm{CH}_d(Y)$ (resp. $\mathrm{CH}_d(Y)/alg$), $V$ and $V'$ being smooth subvarieties of $Y$, then $$N(V\times V)=Npr_1^*V\cdot pr_2^*V=pr_1^*(NV)\cdot pr_2^*V= pr_1^*(NV')\cdot pr_2^*V = pr_1^*V'\cdot pr_2^*(NV)=N(V'\times V')$$ in $\mathrm{CH}_{2d}(Y\times Y)$ (resp. $\mathrm{CH}_{2d}(Y\times Y)/alg$). So we see that the hypothesis yields
$$\begin{tabular}{llll}
$2N(V^{[2]}-V'^{[2]})$ &$=$ &$Nq_*(\widetilde{V\times V}^{\Delta_V} - \widetilde{V'\times V'}^{\Delta_{V'}})$\\
$ $ &$=$ &$\tau^*N[(V\times V) - (V'\times V')] $\\
$ $ &$ $ &$- j_{E_Y,*}\{c(\tau^*_{|E_Y}T_Y)c(E_{Y|E_Y})^{-1}\cdot\tau_{|E_Y}^*N((i_{V,*}(c(T_V)^{-1})-i_{V',*}(c(T_{V'})^{-1}))\}_{2d}$\\
$ $ &$=$ &$0\ \  \mathrm{in\ CH}_{2d}(Y^{[2]})\ \mathrm{(resp.\ CH}_{2d}(Y^{[2]})/alg\mathrm{).}$
\end{tabular}$$\\

\indent (ii) As $\mathrm{CH}_{*\leq d}(Y)$ is assumed to be torsion-free, $V=V'$ in $\mathrm{CH}_d(Y)$. Then, by \cite[Proposition 1.4]{Moon_Poli}, $V^{(2)}=V'^{(2)}$ in $\mathrm{CH}_{2d}(Y^{(2)})$, where, for a variety $Z$, $Z^{(2)}$ is the symmetric product of $Z$. We have the localisation exact sequence $$\mathrm{CH}_{2d}(E_Y)\rightarrow \mathrm{CH}_{2d}(Y^{[2]})\rightarrow \mathrm{CH}_{2d}(Y^{[2]}\backslash E_Y)\rightarrow 0$$ and since $Y^{[2]}\backslash E_Y\simeq Y^{(2)}\backslash \Delta_Y$, $V^{[2]}-V'^{[2]}$ can be written $q_*j_{E_Y,*}\gamma$ for a $2d$-cycle $\gamma\in \mathrm{CH}_{2d}(E_Y)$. According to item (i), $2(V^{[2]}-V'^{[2]})=0$ so that $q_*j_{E,*}(2\gamma)=0$. As, $q$ is flat, $q^*q_*j_{E,*}(2\gamma)=[q^{-1}q_*j_{E,*}(2\gamma)]=j_{E,*}(2\gamma)$ and by the decomposition of the Chow groups of the blow-up $\widetilde{Y\times Y}$, $2\gamma=0$ in $\mathrm{CH}_{2d}(E_Y)$. So, by the decomposition of the Chow groups of projective bundle and torsion-freeness of $\mathrm{CH}_{*\leq 2d}(Y)$, $\gamma=0$ i.e. $V^{[2]}-V'^{[2]}=0$ in $\mathrm{CH}_{2d}(Y^{[2]})$.
\end{proof}
%\textit{}\\
\indent Unfortunately, in general, for a smooth subvariety $V$ of a smooth projective variety $Y$, one cannot expect the class of $V^{[2]}$ in $\mathrm{CH}_*(Y^{[2]})$ to be determined by $(i_{V,*}(c(T_V)^{-1}))$ as the following example, which was communicated to the author by Voisin, shows.\\
\indent Let $S$ be an abelian surface and $x,\,y\in S$ be two distinct $2$-torsion points.
 For any sufficiently ample linear system $\mathcal L$ on
 $S$, there exists a curve  $C_x\in |\mathcal L|$ not containing $y$, resp.  $C_y\in |\mathcal L|$ not containing $x$, which is smooth away from $x$, resp. $y$, and
 has an ordinary double point at $x$, resp. $y$. Let $\tau:\widetilde{S}\rightarrow S$ be the blow-up of $S$ at $x$ and $y$ and $E_x,E_y$ the corresponding exceptional divisors. The normalization $\widetilde{C_x}$ (resp. $\widetilde{C_y}$) of $C_x$ (resp. $C_y$) is the strict transform of $C_x$ (resp. $C_y$) under $\tau$ and its class in $\mathrm{Pic}(\widetilde{S})$ is $\tau^*c_1(\mathcal L) -2E_x$ (resp. $\tau^*c_1(\mathcal L) -2E_y$).\\
\indent Let $\mathcal N\in \mathrm{Pic}(S)$ be sufficiently ample on $S$ so that the line bundle $\tau_{|\widetilde{C_x}}^*\mathcal N_{|{C_x}}$ is very ample (once its degree on $\widetilde{C}_x$ is large enough) on $\widetilde{C_x}$ and $\tau_{|\widetilde{C_y}}^*\mathcal N_{|{C_y}}$ is very ample on $ \widetilde{C_y}$. We can pick a meromorphic function $f_x:\widetilde{C_x}\rightarrow \mathbb P^1$ in $|\tau_{|\widetilde{C_x}}^*\mathcal N_{|{C_x}}|$ such that, denoting $x_1$ and $x_2$ the points lying over the node $x$, $f_x(x_1)\neq f_x(x_2)$. Likewise, we can pick a meromorphic function $f_y:\widetilde{C_y}\rightarrow \mathbb P^1$ in $|\tau_{|\widetilde{C_y}}^*\mathcal N_{|{C_y}}|$ such that $f_y(y_1)\neq f_y(y_2)$, where $y_1,y_2$ are the points lying over the node $y$.\\
\indent Let $X= S\times \mathbb P^1$  be the trivial projective bundle over $S$.  By construction the morphisms $(\tau_{|\widetilde{C_x}},f_x):\widetilde{C_x}\rightarrow X$ and $(\tau_{|\widetilde{C_y}},f_y):\widetilde{C_y}\rightarrow X$ are embeddings so $D_x=(\tau_{|\widetilde{C_x}},f_x)(\widetilde{C_x})$ and $D_y=(\tau_{|\widetilde{C_y}},f_y)(\widetilde{C_y})$ are smooth curves on $X$.
\begin{proposition}\label{prop_ex_hilb2} In this situation, we have $i_{D_x,*}(c(T_{D_x})^{-1})=i_{D_y,*}(c(T_{D_y})^{-1})$ in $\mathrm{CH}_*(X)$ but $D_x^{[2]}\neq D_y^{[2]}$ in $\mathrm{CH}_2(X^{[2]})$.
\end{proposition}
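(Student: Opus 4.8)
\emph{Overall plan.} I would prove the two assertions separately: the equality of the $0$-cycle--valued invariants is a direct computation in $\mathrm{CH}_\ast(X)$, $X=S\times\mathbb P^1$, while the inequality of the Hilbert squares is obtained by producing a homomorphism out of $\mathrm{CH}_2(X^{[2]})$ that separates the two classes. Write $pr_S\colon X\to S$ and $pr_{\mathbb P^1}\colon X\to\mathbb P^1$ for the two projections.

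\emph{The equality $i_{D_x,\ast}(c(T_{D_x})^{-1})=i_{D_y,\ast}(c(T_{D_y})^{-1})$.} Since $D_x$ is a curve, $c(T_{D_x})^{-1}=1+[K_{D_x}]$, so this invariant is $[D_x]+i_{D_x,\ast}[K_{D_x}]\in\mathrm{CH}_1(X)\oplus\mathrm{CH}_0(X)$, and it is enough to check $[D_x]=[D_y]$ in $\mathrm{CH}_1(X)$ and $i_{D_x,\ast}[K_{D_x}]=i_{D_y,\ast}[K_{D_y}]$ in $\mathrm{CH}_0(X)$. By the projective bundle decomposition of $\mathrm{CH}_\ast(X)$ over $S$, the class $[D_x]$ is determined by $pr_{S,\ast}[D_x]$ and by $pr_{S,\ast}\big([D_x]\cdot pr_{\mathbb P^1}^\ast[\mathrm{pt}]\big)$. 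The first equals $[C_x]=c_1(\mathcal L)$; the second equals $\tau_{|\widetilde{C_x},\ast}\,c_1(\tau_{|\widetilde{C_x}}^\ast\mathcal N_{|C_x})=c_1(\mathcal N)\cdot c_1(\mathcal L)$ by the projection formula; both are independent of $x$, so $[D_x]=[D_y]$. For the $0$-cycles, $pr_{S,\ast}\colon\mathrm{CH}_0(X)\xrightarrow{\sim}\mathrm{CH}_0(S)$ carries $i_{D_x,\ast}[K_{D_x}]$ to $\tau_{|\widetilde{C_x},\ast}[K_{\widetilde{C_x}}]$; by adjunction on $\widetilde S$ (using $\omega_{C_x}=\mathcal L_{|C_x}$ since $\omega_S=\mathcal O_S$, and $\nu^\ast\omega_{C_x}=\omega_{\widetilde{C_x}}(x_1+x_2)$ for a node) one gets $K_{\widetilde{C_x}}=\tau_{|\widetilde{C_x}}^\ast c_1(\mathcal L_{|C_x})-x_1-x_2$, hence $\tau_{|\widetilde{C_x},\ast}[K_{\widetilde{C_x}}]=c_1(\mathcal L)^2-2[x]$. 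Here the choice of $2$-torsion nodes is what makes things work: $[x]-[0_S]$ and $[y]-[0_S]$ are $2$-torsion in $\mathrm{CH}_0(S)$ (by Roitman's theorem, \cite{Roitman}, together with the unique divisibility of the Albanese kernel of an abelian variety), so $2[x]=2[0_S]=2[y]$ and the two $0$-cycles coincide.

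\emph{The inequality $D_x^{[2]}\neq D_y^{[2]}$.} I would introduce $W\subset X^{[2]}$, the locus of length-$2$ subschemes contained in a fibre of $pr_S$; it is the closed image of $S\times(\mathbb P^1)^{[2]}\cong S\times\mathbb P^2$, a smooth subvariety of codimension $2$, with a projection $\pi_W\colon W\to S$. Because $\tau_{|\widetilde{C_x}}$ is the normalization of an ordinary node it is an immersion, so $D_x$ is nowhere tangent to a fibre of $pr_S$; hence $D_x^{[2]}\cap W$ contains no length-$2$ scheme supported at a single point, and its only point is the reduced scheme $P_x:=\{(x,f_x(x_1)),(x,f_x(x_2))\}$ (the two branches of the node have equal image in $S$ but distinct images in $\mathbb P^1$, and $C_x$ is smooth elsewhere). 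A tangent space computation at $P_x$ shows $D_x^{[2]}$ and $W$ meet transversally there, since the $S$-components of the two tangent directions of $D_x^{[2]}$ are the two distinct branch directions of the node while $T_{P_x}W$ forces those $S$-components to coincide. Therefore the Gysin pullback along the regular embedding $\iota_W\colon W\hookrightarrow X^{[2]}$ (\cite[Ch.~6]{Fulton}) gives $\iota_W^\ast[D_x^{[2]}]=[P_x]$ in $\mathrm{CH}_0(W)$, and the homomorphism $\Psi:=\pi_{W,\ast}\circ\iota_W^\ast\colon\mathrm{CH}_2(X^{[2]})\to\mathrm{CH}_0(S)$ sends $[D_x^{[2]}]\mapsto[x]$ and $[D_y^{[2]}]\mapsto[y]$. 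Since $x\neq y$, the difference $[x]-[y]$ has non-zero Albanese image $x-y$ in $S$, so $[x]\neq[y]$ in $\mathrm{CH}_0(S)$, and consequently $D_x^{[2]}\neq D_y^{[2]}$ in $\mathrm{CH}_2(X^{[2]})$.

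\emph{Main obstacle.} The adjunction formula on the blow-up, the projective bundle decompositions, and the identification $W\cong S\times\mathbb P^2$ are mechanical. The two points needing care are: in the first part, quoting the correct statement about torsion $0$-cycles on an abelian surface, so that $2[x]=2[y]$ holds while $[x]\neq[y]$; and in the second part, the verification that $D_x^{[2]}$ meets $W$ in the single reduced point $P_x$ --- both that there is no extra (e.g.\ tangential) component, which uses that $\tau_{|\widetilde{C_x}}$ is an immersion, and that the intersection multiplicity there is exactly $1$. This last transversality computation is the real crux, because it is precisely what makes $\Psi$ extract $[x]$ rather than $2[x]$; the doubled class would fail to distinguish $D_x^{[2]}$ from $D_y^{[2]}$, consistently with the fact (Proposition \ref{prop_hilb2}(i)) that the difference is at most $2$-torsion.
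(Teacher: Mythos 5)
Your proof is correct and follows essentially the same route as the paper: the same projective-bundle decomposition of $\mathrm{CH}_*(S\times\mathbb P^1)$ and the same adjunction computation give $i_{D_x,*}(c(T_{D_x})^{-1})=i_{D_y,*}(c(T_{D_y})^{-1})$, and your $W$ is exactly the paper's $P_2\simeq S\times\mathbb P^2$, with $\Psi=\pi_{1,*}\circ i_{P_2}^*$ the separating map used there; your explicit transversality check at $P_x$ (needed so that $\Psi$ extracts $[x]$ rather than an even multiple) is a point the paper leaves implicit, and it is a worthwhile addition. The one shaky spot is your parenthetical justification that $[x]-[0_S]$ is $2$-torsion: Roitman plus unique divisibility of the Albanese kernel only shows that $[x]-[0_S]$ differs from a $2$-torsion class by an element of the uniquely divisible kernel, not that this element vanishes; the fact that a torsion point of an abelian variety gives a torsion class in $\mathrm{CH}_0$ is a theorem of Bloch (via nilpotence of $[x]-[0_S]$ for the Pontryagin product), after which Roitman pins the order down to $2$ --- the paper itself simply asserts $2x=2y$ in $\mathrm{CH}_0(S)$ without proof, so this is a citation issue rather than a gap in the argument.
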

\begin{proof} We have the decomposition $\mathrm{CH}_1(X)\simeq pr_1^*\mathrm{CH}_0(S)\oplus pr_1^*\mathrm{CH}_1(S)$. The projection on $\mathrm{CH}_1(S)$ is given by $pr_{1,*}$; we have $pr_{1,*}D_x=\tau_{|\widetilde{C_x},*}\widetilde{C_x}=C_x$ and $pr_{1,*}D_y=\tau_{|\widetilde{C_y},*}\widetilde{C_y}=C_y$ and $C_x,C_y\in |\mathcal L|$. As the Chern classes of the trivial bundle are trivial the projection on $\mathrm{CH}_0(S)$ is given by the composition of the intersection with $pr_2^*c_1(\mathcal O_{\mathbb P^1}(1))$ followed by $pr_{1,*}$. We have $f_x^*\mathcal O_{\mathbb P^1}(1)\simeq \tau_{|\widetilde{C_x}}^*\mathcal N_{|{C_x}}$ and using projection formula and $C_x\in |\mathcal L|$, we get $pr_{1,*}(D_x\cdot pr_2^*c_1(\mathcal O_{\mathbb P^1}(1)))= c_1(\mathcal L)\cdot c_1(\mathcal N)$ in $\mathrm{CH}_0(S)$. Likewise, we have $pr_{1,*}(D_y\cdot pr_2^*c_1(\mathcal O_{\mathbb P^1}(1)))= c_1(\mathcal L)\cdot c_1(\mathcal N)$. So $D_x=D_y$ in $\mathrm{CH}_1(X)$.\\
\indent By adjunction, we have $K_{\widetilde{C_x}}=(K_{\widetilde{S}}+\widetilde{C_x})_{|\widetilde{C_x}} = (\tau^*(c_1(\mathcal L)+K_S)+E_y-E_x)_{|\widetilde{C_x}}$ so that in $\mathrm{CH}_0(X)\simeq \mathrm{CH}_0(S)$,
$$\begin{tabular}{lll}
$i_{D_x,*}K_{D_x}$ &$=$ &$(\tau\circ i_{\widetilde{C_x}})_*i_{\widetilde{C_x}}^*(\tau^*c_1(\mathcal L)+E_y-E_x)$\\
$ $ &$=$ &$\tau_*i_{\widetilde{C_x},*}i_{\widetilde{C_x}}^*(\tau^*c_1(\mathcal L)+E_y-E_x)$\\
$ $ &$=$ &$\tau_*((\tau^*c_1(\mathcal L)+E_y-E_x)\cdot \widetilde{C_x})$\\
$ $ &$=$ &$\tau_*(\tau^*(c_1(\mathcal L)^2 +2E_x^2)$\\
$ $ &$=$ &$c_1(\mathcal L)^2-2x$
\end{tabular}$$
Likewise $i_{D_y,*}K_{D_y}= c_1(\mathcal L)^2 - 2y$. As $2x=2y$ in $\mathrm{CH}_0(S)$, $i_{D_x,*}K_{D_x}=i_{D_y,*}K_{D_y}$ in $\mathrm{CH}_0(X)$. So $i_{D_x,*}(c(T_{D_x})^{-1})=i_{D_y,*}(c(T_{D_y})^{-1})$.\\
\indent The variety of lines of $X$, with respect to a very ample line bundle of the form $pr_1^*\mathcal L'\otimes pr_2^*\mathcal O_{\mathbb P^1}(1)$, is isomorphic to $S$ since any morphism from a projective space to a abelian variety is constant. Let us denote $P_2=\mathbb P({\rm Sym}^2\mathcal E)\simeq S\times \mathbb P^2$; it parametrizes the length $2$ subschemes of $X$ contained in a line of $X$. So $D_x^{[2]}\cap P_2$ parametrizes length $2$ subschemes of $D_x$ such that the associated line is contained in $X$. But by construction, since $pr_{1,|D_x}:D_x\rightarrow C_x$ is an isomorphism above $C_x\backslash \{x\}$, the only length $2$ subscheme whose associated line is contained in $X$ is $\{x_1+x_2\}$ whose associated line is $\mathbb P(\mathcal E_x)$. So, denoting $i_{P_2}:P_2\hookrightarrow X^{[2]}$ the natural inclusion and $\pi_1:P_2\rightarrow S$ the first projection, we have $\pi_{1,*}(i_{P_2}^*D_x^{[2]})=x$ in $\mathrm{CH}_0(S)$.\\
\indent Likewise $\pi_{1,*}(i_{P_2}^*D_y^{[2]})=y$ in $\mathrm{CH}_0(S)$. So $\pi_{1,*}i_{P_2}^*(D_x^{[2]}-D_y^{[2]})=x-y\neq 0$ in $\mathrm{CH}_0(S)$, in particular $D_x^{[2]}-D_y^{[2]}$ is a nonzero $2$-torsion element in $\mathrm{CH}_2(X^{[2]})$.
\end{proof}

\subsection{Application of the inversion formula\label{secderniere}} Using the results of the previous sections, we get the following:
\begin{theoreme}\label{thm_torsion} Let $X\subset \mathbb P_{\mathbb C}^{n+1}$, with $n\geq 5$, be a smooth cubic hypersurface. For any $\Gamma\in \mathrm{CH}_2(X)$ of $t$-torsion (hence homologically trivial), there is a homologically trivial $2t$-torsion $1$-cycle $\gamma\in \mathrm{CH}_1(F(X))$ and an odd integer $m$ such that $m\Gamma = P_*\gamma$ in $\mathrm{CH}_2(X)$.
\end{theoreme}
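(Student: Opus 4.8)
The plan is to run the inversion formula of Theorem~\ref{thm_form_single} in dimension $d=2$ on a suitable presentation of $\Gamma$ as a difference of classes of smooth surfaces, and then to detect torsion on $F(X)$ through the Hilbert-square formalism of Proposition~\ref{prop_hilb2}. The heart of the argument is the following reduction: I want to write
\[
\Gamma=[\Sigma]-[\Sigma'],\qquad \deg(\Sigma)=\deg(\Sigma')=:e,\qquad i_{\Sigma,*}\big(c(T_\Sigma)^{-1}\big)=i_{\Sigma',*}\big(c(T_{\Sigma'})^{-1}\big)\ \text{in}\ \mathrm{CH}_*(X),
\]
with $\Sigma,\Sigma'\subset X$ smooth (possibly disconnected). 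To obtain this I would start from Corollary~\ref{cor_csq1}, which gives that for $n\ge5$ in characteristic $0$ the group $\mathrm{CH}_2(X)$ is generated by classes of rational surfaces; a Bertini/moving argument then lets me take these surfaces smooth in $X$, so $\Gamma=\sum_k a_k[\Sigma_k]$ with $\Sigma_k$ smooth, and after splitting multiplicities and moving into general position (two surfaces in $X$ are disjoint once $n\ge5$) I collect the positive and negative parts into two disjoint unions $\Sigma_0,\Sigma_0'$ of smooth surfaces with $[\Sigma_0]-[\Sigma_0']=\Gamma$. Since $\Gamma$ is torsion, hence homologically trivial (for $n\ge5$ the group $H^{2n-4}(X,\mathbb Z)$ is torsion-free of rank $1$, so $\cong\mathbb Z$), one automatically has $\deg(\Sigma_0)=\deg(\Sigma_0')$. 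It remains to kill the lower-dimensional part of $i_{\Sigma_0,*}(c(T_{\Sigma_0})^{-1})-i_{\Sigma_0',*}(c(T_{\Sigma_0'})^{-1})$, which lies in $\mathrm{CH}_1(X)\oplus\mathrm{CH}_0(X)$ (with $\mathrm{CH}_0(X)=\mathbb Z$, and $\mathrm{CH}_1(X)$ likewise torsion-free for $n\ge5$), i.e. is recorded by the two integers $K\cdot H$ and $K^2-e_{\mathrm{top}}$; adjoining to the two sides suitable auxiliary smooth surfaces of \emph{equal} class in $X$ (e.g.\ ruled/conic surfaces over curves of prescribed genus, or planes $\mathbb P^2\subset X$ and their degenerations) matches these integers without changing $\Gamma$.

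Granting the reduction, I apply the identity (\ref{eq_inv_form_thm}) with $d=2$ to $\Sigma$ and to $\Sigma'$ and subtract, obtaining in $\mathrm{CH}_2(X)$
\[
(2e-3)\,\Gamma+P_*\Big(p_{P_2,*}\,i_{P_2}^*\big(\Sigma^{[2]}-\Sigma'^{[2]}\big)\cdot c_1(\mathcal O_{F(X)}(1))\Big)=\big(m(\Sigma)-m(\Sigma')\big)\,H_X^{\,n-2}.
\]
Write $\gamma_0:=p_{P_2,*}\,i_{P_2}^*(\Sigma^{[2]}-\Sigma'^{[2]})\cdot c_1(\mathcal O_{F(X)}(1))\in\mathrm{CH}_1(F(X))$ and $c:=m(\Sigma)-m(\Sigma')$.

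Next I feed the vanishing $i_{\Sigma,*}(c(T_\Sigma)^{-1})-i_{\Sigma',*}(c(T_{\Sigma'})^{-1})=0$ into Proposition~\ref{prop_hilb2}(i) with $N=t$: it gives $2t(\Sigma^{[2]}-\Sigma'^{[2]})=0$ in $\mathrm{CH}_4(X^{[2]})$, hence $2t\,\gamma_0=0$, so $\gamma_0$ is a $2t$-torsion $1$-cycle on $F(X)$. Then $P_*\gamma_0$ and $(2e-3)\Gamma$ are both torsion, while $H_X^{n-2}$ has infinite order, so the displayed identity forces $c=0$; that is, $(2e-3)\Gamma=-P_*\gamma_0$. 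Moreover $[\Sigma]=[\Sigma']$ in $H^{2n-4}(X,\mathbb Z)$ (as $\Gamma$ is homologically trivial), so $[\Sigma^{[2]}]=[\Sigma'^{[2]}]$ in $H^*(X^{[2]},\mathbb Z)$ and therefore $\gamma_0$ is homologically trivial on $F(X)$. Taking $m:=2e-3$, which is odd, and $\gamma:=-\gamma_0$, which is a homologically trivial $2t$-torsion $1$-cycle, yields $m\Gamma=P_*\gamma$, as claimed.

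The main obstacle is the reduction step: realizing the torsion class $\Gamma$ as a difference of two smooth surfaces of equal degree with equal total Chern-class pushforwards. Its two delicate points are (i) passing from the a priori singular rational-surface generators of Corollary~\ref{cor_csq1} to genuine smooth subvarieties of $X$ with the same classes, and (ii) exhibiting inside $X$ enough smooth surfaces of prescribed numerical invariants to cancel the $\mathrm{CH}_1\oplus\mathrm{CH}_0$ correction — both using $n\ge5$ and the smallness of $\mathrm{CH}_0(X)$ and $\mathrm{CH}_1(X)$. By contrast, once this presentation is available the inversion formula and Proposition~\ref{prop_hilb2} finish the proof essentially formally; note that Proposition~\ref{prop_ruled_hyperpl} is not needed here, the coefficient $c$ of $H_X^{n-2}$ being forced to vanish.
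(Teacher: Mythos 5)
Your endgame (subtracting the two instances of Theorem \ref{thm_form_single}, forcing the coefficient of $H_X^{n-2}$ to vanish because everything else is torsion, and detecting $2t$-torsion of $\Sigma^{[2]}-\Sigma'^{[2]}$ via Proposition \ref{prop_hilb2}) is exactly how the paper concludes. But the heart of your argument --- the reduction producing the pair $(\Sigma,\Sigma')$ --- is left as a hope, and it is precisely the nontrivial part. First, as literally stated your hypothesis is vacuous: the dimension-$2$ component of $i_{\Sigma,*}(c(T_\Sigma)^{-1})$ is $[\Sigma]$ itself, so the equality $i_{\Sigma,*}(c(T_\Sigma)^{-1})=i_{\Sigma',*}(c(T_{\Sigma'})^{-1})$ in $\mathrm{CH}_*(X)$ forces $\Gamma=[\Sigma]-[\Sigma']=0$; what you actually need (and what the rest of your text implicitly uses, since you apply Proposition \ref{prop_hilb2} with $N=t$ rather than $N=1$) is that the difference of Chern pushforwards be $t$-torsion, i.e.\ that its $\mathrm{CH}_1\oplus\mathrm{CH}_0$ part vanish exactly. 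Second, neither of the two steps you flag is carried out, and neither is routine: the generators coming from Corollary \ref{cor_csq1} are images of ruled surfaces over rational curves of $F(X)$ and there is no ``Bertini/moving argument'' that replaces them, within their rational equivalence classes in $X$, by smooth surfaces; and ``adjoining auxiliary smooth surfaces of equal class'' to match the two integers $K\cdot H$ and $K^2-e_{\mathrm{top}}$ requires exhibiting, inside $X$, pairs of smooth surfaces that are rationally equivalent but have prescribed differences of these invariants --- nothing in your sketch produces them.

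The paper resolves exactly this difficulty by a different route, which is why Proposition \ref{prop_ruled_hyperpl} is in fact essential and not dispensable as you claim: one first writes $\Gamma=P_*\alpha$ (Proposition \ref{prop_ruled_hyperpl}), observes that $\alpha$ is numerically trivial on $F(X)$ (using $\Gamma\cdot H_X^2=0$ and $\mathrm{Pic}(F(X))\simeq\mathbb Z$), and then invokes Lemma \ref{prop_mm_genre} to write $\alpha=E_1-E_2$ with $E_1,E_2$ smooth curves of the \emph{same genus} and degree; the surfaces are then the ruled surfaces $S_{E_i}=q(p^{-1}(E_i))$, made smooth by transversality, and for these one computes $i_{S_{E_1},*}(c(T_{S_{E_1}})^{-1})-i_{S_{E_2},*}(c(T_{S_{E_2}})^{-1})=(S_{E_1}-S_{E_2})\cdot(1-2H_X)$, which is $t$-torsion because both $E_i$ have the same genus and degree --- this is the concrete substitute for your unproven matching of $(K\cdot H, K^2-e_{\mathrm{top}})$. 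A further small point: your claim that $[\Sigma]=[\Sigma']$ in $H^{2n-4}(X,\mathbb Z)$ alone gives $[\Sigma^{[2]}]=[\Sigma'^{[2]}]$ is too quick (the Hilbert-square class is not determined by the class of the surface, cf.\ Proposition \ref{prop_ex_hilb2}); the paper instead deduces homological triviality of $S_{E_1}^{[2]}-S_{E_2}^{[2]}$ from its being torsion in $\mathrm{CH}$ together with Totaro's torsion-freeness of $H^*(X^{[2]},\mathbb Z)$, an ingredient your sketch omits.
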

\begin{proof} Let $\Gamma\in \mathrm{CH}_2(X)$ be a cycle annihilated by $t\in \mathbb{Z}_{>0}$. Using Proposition \ref{prop_ruled_hyperpl}, we can find a $1$-cycle $\alpha$ in $F(X)$ such that $P_*(\alpha)=\Gamma$. As $\Gamma$ is a torsion cycle and $\mathrm{CH}_0(X)=\mathbb Z$, $\Gamma\cdot H^2=0$ and since $c_1(\mathcal O_{\mathbb P(\mathcal E_{2|F(X)})}(1))=q^*H$, we get $deg(\alpha\cdot c_1(\mathcal O_{F(X)}(1)))=deg(q_*[p^*(\alpha\cdot c_1(\mathcal O_{F(X)}(1))\cdot q^*H])=0$, where $\mathcal O_{F(X)}(1)=det(\mathcal E_{2|F(X)})$ is the Pl\"ucker line bundle, which implies that $\alpha\cdot c_1(\mathcal O_{F(X)}(1))=0$ in $\mathrm{CH}_0(F(X))$ since $F(X)$ is rationally connected. As $\mathrm{Pic}(F(X))\simeq \mathbb Z$ (\cite[Corollaire 3.5]{Deb-Man}), $\alpha$ is numerically trivial. We have the following lemma:
\begin{lemme}\label{prop_mm_genre} Let $Y$ be a smooth projective variety of dimension $d\geq 3$ and $D$ a numerically trivial $1$-cycle of $Y$. Then there are smooth curves $D_1,D_2\subset Y$ of the same genus such that $D=D_1-D_2$ in $\mathrm{CH}_1(Y)$.
\end{lemme}
%\begin{proposition}\textit{(see \cite{CSchnell_ci}).} Let $Y$ be a smooth projective of dimension $\geq 3$, $\mathcal L\in Pic(Y)$ a very ample line bundle on $Y$ and finitely many pairwise disjoint smooth curves of $Y$, $(D_i)$. Then there is a smooth surface $S\subset Y$ which is complete intersection of hypersurfaces defined by sections of powers of $\mathcal L$ containing the $(D_i)_i$.
%\end{proposition}
Postponing the proof of the lemma, we conclude as follows: let $E_1,E_2\subset F(X)$ be two smooth curves of genus $g$ such that $\alpha=E_1-E_2$ in $\mathrm{CH}_1(F(X))$; they have also the same degree ($\alpha$ is numerically trivial) that we shall denote $d$. Let us denote $S_{E_1}=q(p^{-1}(E_1))$ and $S_{E_2}=q(p^{-1}(E_2))$ the associated ruled surfaces in $X$, we have $\Gamma=P_*\alpha=S_{E_1}-S_{E_2}$in $\mathrm{CH}_2(X)$. By transversality arguments, we can arrange that $q$ induces an embedding of $p^{-1}(E_1)$ (resp. $p^{-1}(E_2)$) in $X$ so that $S_{E_1}$ (resp. $S_{E_2}$) is smooth and isomorphic to $p^{-1}(E_1)$ (resp. $p^{-1}(E_2)$). An easy computation then gives:
$$\begin{tabular}{lll}
$i_{S_{E_1},*}(c(T_{S_{E_1}})^{-1})$ &$=$ &$S_{E_1} + P_*(i_{E_1,*}K_{E_1} + dc_1(\mathcal O_{F(X)}(1))\cdot E_1) - 2S_{E_1}\cdot H_X - 2H_X\cdot P_*(i_{E_1,*}K_{E_1})$\\
$ $ &$=$ &$P_*(E_1) + (2g-2+d)P_*[l_0] - 2 P_*(E_1)\cdot H_X - 2(2g-2)P_*([l_0])\cdot H_X$
\end{tabular}$$
since $\mathrm{CH}_0(F(X))\simeq \mathbb Z\cdot [l_0]$ for a (any) point $[l_0]\in F(X)$. Likewise $$i_{S_{E_2},*}(c(T_{S_{E_2}})^{-1})=P_*(E_2) + (2g-2+d)P_*[l_0] - 2 P_*(E_2)\cdot H_X - 2(2g-2)P_*([l_0])\cdot H_X$$ so that $i_{S_{E_1},*}(c(T_{S_{E_1}})^{-1})-i_{S_{E_2},*}(c(T_{S_{E_2}})^{-1})=(S_{E_1} -S_{E_2})\cdot (1-2H_X)$ is annihilated by $t$ in $\mathrm{CH}_*(X)$. Using Proposition \ref{prop_hilb2}, we get that $S_{E_1}^{[2]}-S_{E_2}^{[2]}$ is annihilated by $2t$ in $\mathrm{CH}_4(X^{[2]})$. According to \cite[Theorem 2.2]{Totaro_hilb_sch}, since $H^*(X,\mathbb Z)$ is torsion-free (by Lefschetz hyperplane and universal coefficient theorems), $H^*(X^{[2]},\mathbb Z)$ is torsion-free so that $[S_{E_1}^{[2]}-S_{E_2}^{[2]}]=0$ in $H^{4n-8}(X^{[2]},\mathbb Z)$.\\
\indent Now, Theorem \ref{thm_form_single}, says that there are integers $m_1, m_2$ such that $$(2d-3)S_{E_1} + P_*(p_{P_2,*}i_{P_2}^*S_{E_1}^{[2]}\cdot c_1(\mathcal O_{F(X)}(1)))= m_1 H_X^{n-2}\ \ \mathrm{in\ CH}_2(X)$$ and  $$(2d-3)S_{E_2} + P_*(p_{P_2,*}i_{P_2}^*S_{E_2}^{[2]}\cdot c_1(\mathcal O_{F(X)}(1)))= m_2 H_X^{n-2}\ \ \mathrm{in\ CH}_2(X)$$ in particular $(2d-3)\Gamma + P_*(p_{P_2,*}i_{P_2}^*(S_{E_1}^{[2]}-S_{E_2}^{[2]})\cdot c_1(\mathcal O_{F(X)}(1))) \in \mathbb Z\cdot H_X^{n-2}$. But intersecting with $H_X^2$, since $\Gamma$ and $p_{P_2,*}i_{P_2}^*(S_{E_1}^{[2]}-S_{E_2}^{[2]})$ are torsion cycles, we see that actually:$$(2d-3)\Gamma + P_*(p_{P_2,*}i_{P_2}^*(S_{E_1}^{[2]}-S_{E_2}^{[2]})\cdot c_1(\mathcal O_{F(X)}(1)))=0$$ in $\mathrm{CH}_2(X)$. Moreover $p_{P_2,*}i_{P_2}^*(S_{E_1}^{[2]}-S_{E_2}^{[2]})$ is homologically trivial since $S_{E_1}^{[2]}-S_{E_2}^{[2]}$ is.
\end{proof}

\begin{proof}[Proof of Lemma \ref{prop_mm_genre}.] Using Hironaka's smoothing of cycles (\cite{Hir_sm_cycles}) and moving lemma, we can write $D=\sum_im_iC_i$ where $(C_i)_{1\leq i\leq N}$ is a family of smooth pairwise disjoint connected curves. We can always assume that there is a $i_0$ such that $m_{i_0}=1$. Indeed, if none of the $m_i$ is equal to $1$, then we can pick $2$ smooth curves $C_{N+1}, C_{N+2}\subset Y$ which are rationally equivalent such that $(C_i)_{1\leq i\leq N+2}$ is still a family of pairwise disjoint smooth curves. Then $D=\sum_im_iC_i+C_{N+1}-C_{N+2}$ in $\mathrm{CH}_1(Y)$.\\
\indent Let $C\subset Y$ be a smooth curve intersecting $C_{i_0}$ transversally in a unique point and disjoint from the remaining $C_i$ and $Z=(\cup_{i=1}^NC_i)\cup C$. The subscheme $Z$ is purely $1$-dimensional and smooth away from the point $C\cap C_{i_0}$ which is an ordinary double point. In particular $Z$ is a local complete intersection subscheme, so that the sheaf $I_Z/I_Z^2$ on $Z$ is a vector bundle that we shall denote $N_{Z/Y}^\vee$. Let $\mathcal L\in \mathrm{Pic}(Y)$ be a very ample line bundle such that $H^1(Y,\mathcal L\otimes I_Z^2)=0$ and $N_{Z/Y}^\vee\otimes \mathcal L_{|Z}$ is globally generated. Then, from the exact sequence $$0\rightarrow I_Z^2\rightarrow I_Z\rightarrow N_{Z/Y}^\vee\rightarrow 0$$ we get a surjective morphism $H^0(Y,\mathcal L\otimes I_Z)\stackrel{\rho}{\rightarrow} H^0(Z,N_{Z/Y}^\vee\otimes \mathcal L_{|Z})$. According to \cite[Lemma 1]{Schnell_hypersurf}, for any nonzero section $s\in H^0(Y,\mathcal L\otimes I_Z)$, the zero scheme $V(s)\subset Y$ is singular at a point $x\in Z$ if and only if the section $\rho(s)$ of $N_{Z/Y}^\vee\otimes \mathcal L_{|Z}$ vanishes at $x$. As, $N_{Z/Y}^\vee\otimes \mathcal L_{|Z}$ is globally generated of rank $\geq 2$, the zero locus of a generic section of $N_{Z/Y}^\vee\otimes \mathcal L_{|Z}$ has codimension $rank(N_{Z/Y}^\vee\otimes \mathcal L_{|Z})\geq 2$ i.e. is empty. So we can find a smooth hypersurface in $|\mathcal L|$ containing $Z$. Repeating the process, we can get a smooth surface $S\subset Y$, which is complete intersection of hypersurfaces given by sections of powers of $\mathcal L$, containing $Z$. Next it is a standard fact (e.g consequence of   Riemann-Roch formula) that for any divisor $W$ on a smooth projective surface $S$, $deg(W\cdot(W+K_S))$ is even.
Applying this fact  to the divisor $D=\sum_im_iC_i$ of $S$, $\int_S D\cdot(D+K_S)$ is even and since $D$ is numerically trivial on $Y$ and $K_S$ is the restriction of a divisor of $Y$ by adjunction formula ($S$ is complete intersection in $Y$), $deg(D^2)\in 2\mathbb Z$. Let us write $deg(D^2)=2\ell$. Let $H\in \mathrm{Pic}(S)$ be a very ample divisor coming from $Y$ such that the line bundles $\mathcal O_S(H-\ell C)$ and $\mathcal O_S(H-\ell C+D)$ are ample. We can choose smooth connected curves $E_1\in |H-\ell C+D|$ and $E_2\in |H-\ell C|$; we then have $D=E_1-E_2$ in $\mathrm{Pic}(S)$ (thus, in $\mathrm{CH}_1(Y)$ also). By adjunction formula, we have:
$$\begin{tabular}{lll}
$2g(E_1)-2$ &$=$ &$\int_S (H-\ell C+D)\cdot(H-\ell C+D+K_S)$\\
$ $ &$=$ &$\int_S(H-\ell C)\cdot(H-\ell C+K_S) + \int_SD\cdot(H-\ell C+D+K_S) + \int_SD\cdot(H-\ell C)$\\
$ $ &$=$ &$\int_S(H-\ell C)\cdot(H-\ell C+K_S) + \int_SD^2 -2\ell D\cdot C$\\
$ $ &$ $ &$\ \ \  \mathrm{since}\ D\ \mathrm{is\ numerically\ trivial\ on}\ Y\ \mathrm{and}\ H\ \mathrm{and}\ K_S\ \mathrm{come\ from\ divisors\ of}\ Y$\\
$ $ &$=$ &$\int_S(H-\ell C)\cdot(H-\ell C+K_S) \ \mathrm{since\ by\ construction}\ \int_SC\cdot D= \int_SC\cdot C_{i_0}=1$
\end{tabular}$$
 and
$$2g(E_2)-2= \int_S (H-\ell C)\cdot(H-\ell C+K_S).$$ i.e. $g(E_1)=g(E_2)$.
\end{proof}
Before stating our main corollary, let us prove the following lemma:
\begin{lemme} The group $\mathrm{CH}_1(Y)_{tors,AJ}$ is a stable birational invariant for smooth projective  varieties
$Y$.
\end{lemme}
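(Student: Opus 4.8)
The plan is to check that $\mathrm{CH}_1(-)_{tors,AJ}$, regarded as a subgroup of $\mathrm{CH}_1(-)$ of smooth projective complex varieties, (a) is functorial under the action of correspondences, (b) is invariant under blowing up a smooth centre, and (c) is invariant under $Y\mapsto Y\times\mathbb P^m$; stable birational invariance then follows formally from the weak factorization theorem. This is exactly the strategy of Voisin's proof that $\mathrm{CH}^3_{tors,AJ}$ is a stable birational invariant (\cite{vois_d4_unr}), and it is in fact simpler here because the only ``new'' classes appearing in the relevant decomposition formulas are $0$-cycles, which are controlled directly by Roitman's theorem. For (a): a correspondence induces a group homomorphism on $\mathrm{CH}_1$ (so it preserves torsion), compatible with the cycle class maps to Betti cohomology (so it preserves homological triviality) and with the Abel-Jacobi maps to the intermediate Jacobians (so it preserves being annihilated by Abel-Jacobi); hence it acts on $\mathrm{CH}_1(-)_{tors,AJ}$, compatibly with composition, and in particular so do proper push-forwards and pull-backs between smooth projective varieties and Gysin maps along regular embeddings.

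For (b), I would take $\epsilon\colon\widetilde Y\to Y$ the blow-up of a smooth projective $Y$ along a smooth centre $Z$ of codimension $c\geq 2$, with exceptional divisor $j\colon E=\mathbb P(N_{Z/Y})\hookrightarrow\widetilde Y$ and bundle projection $\pi\colon E\to Z$. The blow-up formula for Chow groups gives $\mathrm{CH}_1(\widetilde Y)=\epsilon^*\mathrm{CH}_1(Y)\oplus j_*(\pi^*\mathrm{CH}_0(Z)\cdot\zeta^{\,c-2})$, where $\zeta=c_1(\mathcal O_E(1))$, the remaining summands $\mathrm{CH}_{1-c+i}(Z)$ with $1\le i\le c-2$ vanishing for dimension reasons. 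Writing a class $\widetilde\gamma\in\mathrm{CH}_1(\widetilde Y)_{tors,AJ}$ as $\epsilon^*\gamma+j_*(\pi^*\beta\cdot\zeta^{\,c-2})$ with $\gamma\in\mathrm{CH}_1(Y)$, $\beta\in\mathrm{CH}_0(Z)$ torsion, applying $\epsilon_*$ shows $\gamma=\epsilon_*\widetilde\gamma\in\mathrm{CH}_1(Y)_{tors,AJ}$, so the exceptional part $\widetilde\gamma-\epsilon^*\gamma$ again lies in $\mathrm{CH}_1(\widetilde Y)_{tors,AJ}$; applying $j^*$ (using $\mathcal O_{\widetilde Y}(E)_{|E}\simeq\mathcal O_E(-1)$) and then $\pi_*$ (which identifies $\mathrm{Alb}(E)$ with $\mathrm{Alb}(Z)$, $\pi$ being a projective bundle, and satisfies $\pi_*\zeta^{\,c-1}=1$) exhibits $\beta$ as a torsion, Abel-Jacobi trivial $0$-cycle on $Z$, hence $\beta=0$ by Roitman (\cite{Roitman}). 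Thus $\widetilde\gamma=\epsilon^*\gamma$, and since $\epsilon_*\epsilon^*=\mathrm{id}$ the map $\epsilon^*\colon\mathrm{CH}_1(Y)_{tors,AJ}\to\mathrm{CH}_1(\widetilde Y)_{tors,AJ}$ is an isomorphism.

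For (c), with $p\colon Y\times\mathbb P^m\to Y$ and $\zeta=c_1(\mathcal O_{\mathbb P^m}(1))$, the projective bundle formula gives $\mathrm{CH}_1(Y\times\mathbb P^m)=p^*\mathrm{CH}_1(Y)\oplus p^*\mathrm{CH}_0(Y)\cdot\zeta$; writing $\widetilde\gamma=p^*\gamma_1+p^*\gamma_0\cdot\zeta$, restriction to a fibre $Y\times\{x\}$ (on which $\zeta$ pulls back to $0$) extracts $\gamma_1\in\mathrm{CH}_1(Y)_{tors,AJ}$, and restricting the remaining term $p^*\gamma_0\cdot\zeta$ to $Y\times\ell$ for a line $\ell\subset\mathbb P^m$ and pushing down to $Y$ exhibits $\gamma_0$ as a torsion, Abel-Jacobi trivial $0$-cycle, hence $\gamma_0=0$ by Roitman again, so $p^*$ is an isomorphism. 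Finally, by the weak factorization theorem any birational map between smooth projective complex varieties factors as a chain of blow-ups and blow-downs along smooth centres within the category of smooth projective varieties, so (b) gives birational invariance of $\mathrm{CH}_1(-)_{tors,AJ}$, and combining with (c) shows that if $Y\times\mathbb P^r$ is birational to $Y'\times\mathbb P^s$ then $\mathrm{CH}_1(Y)_{tors,AJ}\simeq\mathrm{CH}_1(Y')_{tors,AJ}$, i.e. stable birational invariance.

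The delicate part is the bookkeeping in (b) and (c): one must check that the only summand of $\mathrm{CH}_1$ of the blow-up (respectively of $Y\times\mathbb P^m$) not coming from the base is a group of $0$-cycles, and then verify carefully that the $0$-cycle extracted from a torsion Abel-Jacobi-trivial class is itself torsion and Abel-Jacobi-trivial, so that Roitman's injectivity theorem applies and forces it to vanish; everything else is formal functoriality together with the weak factorization theorem.
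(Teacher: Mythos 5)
Your proof is correct and follows essentially the same route as the paper: decompose $\mathrm{CH}_1$ of a blow-up (resp.\ of $Y\times\mathbb P^m$) into $\mathrm{CH}_1$ of the base plus a group of $0$-cycles, check compatibility with torsion, homological triviality and Abel--Jacobi maps, and kill the $0$-cycle contribution by Roitman's theorem, i.e.\ $\mathrm{CH}_0(-)_{tors,AJ}=0$. The only difference is presentational: the paper dismisses the reduction to blow-ups and products with ``as usual'', whereas you justify it explicitly via weak factorization and carry out the push-pull bookkeeping ($\epsilon_*$, $j^*$, $\pi_*$) in detail, which is a perfectly valid way to make the standard reduction precise in characteristic $0$.
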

\begin{proof} As usual, it suffices to prove invariance under taking products with
$\mathbb{P}^r$and under blow-ups. We have
$${\rm CH}_1(Y\times \mathbb{P}^r)={\rm CH}_1(Y)\oplus {\rm CH}_0(Y)$$
and this decomposition is compatible with the Deligne cycle class map. As the torsion
of ${\rm CH}_0(Y)$ injects into ${\rm Alb}\,Y$ by Roitman \cite{Roitman}, it follows
that ${\rm CH}_0(Y)_{tors, AJ}=0$ which proves the first invariance.
Similarly, let $\widetilde{Y}_Z\rightarrow Y$ be the blow-up of
$Y$ along $Z$, with $Z$ smooth of codimension $\geq2$. Then we have
$${\rm CH}_1(\widetilde{Y}_Z)={\rm CH}_1(Y)\oplus {\rm CH}_0(Z)$$
and we conclude by the same argument invoking \cite{Roitman} that
${\rm CH}_1(\widetilde{Y}_Z)_{tors, AJ}={\rm CH}_1({Y})_{tors, AJ}$.
\end{proof}
\begin{remarque} {\rm In fact the same arguments show that the group ${\rm CH}_1({Y})_{tors, AJ}$
is trivial when $Y$ admits a Chow-theoretic decomposition of the diagonal.}
\end{remarque}

We have the following corollary for cubic $5$-folds:
\begin{corollaire}\label{cor_tors_abel_j} Let $X\subset \mathbb P^6_{\mathbb C}$ be smooth cubic hypersurface. Then $P_*:\mathrm{CH}_1(F(X))_{tors,AJ}\rightarrow \mathrm{CH}^3(X)_{tors,AJ}$ is surjective. So the birational invariant $\mathrm{CH}^3(X)_{tors,AJ}$ of $X$ is controlled by the group $\mathrm{CH}_1(F(X))_{tors,AJ}$.
\end{corollaire}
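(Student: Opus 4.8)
The plan is to run a torsion, Abel--Jacobi trivial $2$-cycle of $X$ through Theorem~\ref{thm_torsion}, to improve the resulting $1$-cycle on $F(X)$ so that it becomes Abel--Jacobi trivial, and then to read off the control statement from the Lemma above. Since $\dim X=5$, codimension-$3$ cycles on $X$ are $2$-dimensional cycles, so $\mathrm{CH}^3(X)_{tors,AJ}=\mathrm{CH}_2(X)_{tors,AJ}$; and, as recalled in the introduction, the degree-$2$ unirational parametrization of $X$ makes this functorial birational invariant a $2$-torsion group. So fix $\Gamma\in\mathrm{CH}^3(X)_{tors,AJ}$, so that $2\Gamma=0$ and $\Phi_X(\Gamma)=0$ in $J^5(X)$. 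Applying Theorem~\ref{thm_torsion} with $t=2$ yields a homologically trivial, $4$-torsion $1$-cycle $\gamma\in\mathrm{CH}_1(F(X))$ and an odd integer $m$ with $P_*\gamma=m\Gamma$; as $m$ is odd and $2\Gamma=0$ we get $m\Gamma=\Gamma$, so $P_*\gamma=\Gamma$ with $\gamma$ torsion and homologically trivial. What is left is to arrange that $\gamma$ lies in $\mathrm{CH}_1(F(X))_{tors,AJ}$, i.e.\ that $\Phi_{F(X)}(\gamma)=0$ in $J^9(F(X))$.

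For this I would invoke the compatibility of Abel--Jacobi maps with the action of the correspondence $P$: one has $\Phi_X\circ P_*=P_*\circ\Phi_{F(X)}$ on homologically trivial cycles, where on the right $P_*\colon J^9(F(X))\to J^5(X)$ is the morphism induced by the Gysin map $P_*\colon H^9(F(X),\mathbb Z)\to H^5(X,\mathbb Z)$. Since $\Phi_X(\Gamma)=0$, this shows $\Phi_{F(X)}(\gamma)\in\ker\bigl(P_*\colon J^9(F(X))\to J^5(X)\bigr)$. The decisive step is to prove that this map is an isomorphism --- the higher-dimensional analogue of the Clemens--Griffiths isomorphism: the cylinder correspondence $P$ identifies $J^5(X)\simeq J^9(F(X))$. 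At the level of cohomology this amounts to the cylinder homomorphism $P^*\colon H^5(X)\xrightarrow{\sim}H^3(F(X))$ being an isomorphism of integral, torsion-free Hodge structures --- the target having $h^{3,0}(F(X))=0$ since $F(X)$ is rationally connected, and $b_3(F(X))=b_5(X)$ by the computation of the cohomology of the Fano sixfold $F(X)$ --- which one then dualizes by Poincar\'e duality on $X$ and on $F(X)$ to identify $P_*\colon H^9(F(X))\xrightarrow{\sim}H^5(X)$. Granting this, $\Phi_{F(X)}(\gamma)=0$, so $\gamma\in\mathrm{CH}_1(F(X))_{tors,AJ}$ and $P_*\gamma=\Gamma$, which gives the surjectivity.

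Finally, the control statement is immediate: by the Lemma proved just above, $\mathrm{CH}_1(F(X))_{tors,AJ}$ is a stable birational invariant of $F(X)$, and $\mathrm{CH}^3(X)_{tors,AJ}$ has just been realized as its image under $P_*$; in particular it vanishes as soon as the $2$-torsion of $\mathrm{CH}_1(F(X))_{tors,AJ}$ does. The main obstacle is exactly the isomorphism $P_*\colon J^9(F(X))\xrightarrow{\sim}J^5(X)$: the Hodge-theoretic comparison readily gives it over $\mathbb Q$, hence an isogeny, but a nontrivial finite kernel would merely place $\Phi_{F(X)}(\gamma)$ in a finite group instead of proving it zero, so one needs the integral refinement (as in the cubic threefold case). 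A substitute worth trying, should the integral statement be unavailable, is to note that $P^*\Gamma\in\mathrm{CH}^2(F(X))$ is a torsion cycle annihilated by Abel--Jacobi, hence $0$ by Bloch's theorem $\mathrm{CH}^2(-)_{tors,AJ}=0$, and to exploit this vanishing together with the inversion-type relations of Section~3; carrying that route out is the remaining technical point.
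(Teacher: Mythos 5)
Your argument follows the paper's proof step for step -- apply Theorem~\ref{thm_torsion}, use that $\mathrm{CH}^3(X)_{tors,AJ}$ is $2$-torsion and $m$ is odd to get $P_*\gamma=\Gamma$ with $\gamma$ torsion and homologically trivial, then use functoriality of the Abel--Jacobi maps to place $\Phi^9_{F(X)}(\gamma)$ in the kernel of $P_*\colon J^9(F(X))\rightarrow J^5(X)$ -- but it stops exactly at the decisive point, which you candidly flag: you need that $P_*\colon J^9(F(X))\rightarrow J^5(X)$ is an \emph{integral} isomorphism, not merely an isogeny, since a finite kernel would indeed only confine $\Phi_{F(X)}(\gamma)$ to a finite group (and $\gamma$ is itself torsion, so this would give nothing). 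The Hodge-theoretic comparison you sketch ($h^{3,0}(F(X))=0$, equality of Betti numbers, Poincar\'e duality) only yields the statement with $\mathbb{Q}$-coefficients, as you say. This is a genuine gap in the proposal as written; the paper closes it by citing Shimada's theorem on the cylinder homomorphism (\cite{Shimada}), which gives precisely that $P_*$ is an isomorphism of abelian varieties for cubic hypersurfaces, after which your argument is verbatim the paper's. So the missing ingredient is a known result from the literature rather than a conceptual error, but without it (or an integral substitute) the surjectivity is not proved.

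Your proposed fallback -- apply Bloch's theorem to $P^*\Gamma\in\mathrm{CH}^2(F(X))_{tors,AJ}$ to get $P^*\Gamma=0$ and then feed this into ``inversion-type relations'' -- is not carried out, and it is not clear it can be: knowing $P^*\Gamma=0$ does not by itself control $\Phi_{F(X)}(\gamma)$ for the specific lift $\gamma$ produced by Theorem~\ref{thm_torsion}, and the inversion formula of Section~3 relates $P_*$ of a cycle built from $\Sigma^{[2]}$ back to $\Sigma$ on $X$, not to $P^*\Sigma$ on $F(X)$; turning that into the needed statement about the Abel--Jacobi image of $\gamma$ would require an inversion of $P^*$ on Chow groups that the paper does not establish. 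So treat the citation of \cite{Shimada} as the essential missing step rather than the Bloch detour.
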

\begin{proof} Let $\Gamma\in \mathrm{CH}^3(X)_{tors, AJ}\simeq \mathrm{CH}_2(X)_{tors,AJ}$; by Theorem \ref{thm_torsion}, there are a homologically trivial torsion cycle $\gamma\in \mathrm{CH}_1(F(X))$ and an integer $d$ such that $(2d-3)\Gamma=P_*\gamma$. Because of the degree $2$ unirational parametrization of $X$, $\mathrm{CH}^3(X)_{tors, AJ}$ is a $2$-torsion group; in particular $(2d-3)\Gamma=\Gamma$ in $\mathrm{CH}^3(X)$ and it is equal to $P_*\gamma$. By functoriality of Abel-Jacobi maps ($P_*$ induces morphisms of Hodge structures), denoting, for a complex variety $Y$, $\Phi_Y^{2c-1}$ the Abel-Jacobi map for homologically trivial cycles of codimension $c$, $\Phi_X^5(\Gamma)=P_*\Phi_{F(X)}^9(\gamma)$. Now, by \cite{Shimada}, $P_*$ is an isomorphism of abelian varieties so $\gamma$ is annihilated by the Abel-Jacobi map.
\end{proof}

Under the assumption of the corollary, the variety $F(X)$ is Fano, hence rationally connected. Along the lines of the questions asked in \cite{vois_survey} for the group ${\rm Griff}_1(Y)$, and
the results proved in  \cite{Vois_rem_rat_curv} for
the group $H_2(Y,\mathbb{Z})/H_2(Y,\mathbb{Z})_{alg}$ (showing that it should be trivial
for rationally connected varieties), it is tempting to believe that the group $\mathrm{CH}_1(Y)_{tors,AJ}$
is always trivial for rationally connected varieties. Thus we can see
Corollary \ref{cor_tors_abel_j} as  an evidence that the group
$\mathrm{CH}^3(X)_{tors,AJ}$ should be trivial.\\
\indent For example, for cubic hypersurfaces, we have the following result which follows essentially from the work of Shen (\cite{Shen_main}):
\begin{proposition}\label{Prop_ex_4-fold} Let $X\subset \mathbb P^{n+1}_{\mathbb C}$, with $n\geq 3$, be a smooth cubic hypersurface. Then $\mathrm{CH}_1(X)_{tors,AJ}=0$
\end{proposition}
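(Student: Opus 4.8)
The plan is to argue separately according to the dimension $n$. For $n=3$ there is nothing specific to do: $\mathrm{CH}_1(X)=\mathrm{CH}^2(X)$, and the statement is exactly Bloch's theorem (\cite{Bloch_tors},\cite{CSS}) that $\mathrm{CH}^2(Y)_{tors,AJ}=0$ for every smooth projective variety $Y$. So assume from now on $n\geq 4$, where the strategy will be to prove the stronger statement $\mathrm{CH}_1(X)\simeq\mathbb Z$.

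The starting point is Theorem \ref{prop_gen_5} applied with $i=1$ and $r=1$: the hypotheses ($n\geq 2^1+1$, $1\neq n/2$, resolution of singularities in dimension $1$, and $X$ containing a line) all hold, so $P_*\colon\mathrm{CH}_0(F(X))\to\mathrm{CH}_1(X)/\mathbb Z\cdot H_X^{n-1}$ is surjective; in other words $\mathrm{CH}_1(X)$ is generated over $\mathbb Z$ by the class $H_X^{n-1}$ together with the classes of the lines of $X$, i.e. $\mathrm{CH}_1(X)=\mathbb Z\cdot H_X^{n-1}+\mathrm{Im}(P_*)$. It therefore suffices to prove (i) that any two lines of $X$ are rationally equivalent, so that $\mathrm{Im}(P_*)=\mathbb Z\cdot[\ell]$ for a line $\ell\subset X$, and (ii) that $H_X^{n-1}=3[\ell]$ in $\mathrm{CH}_1(X)$. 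Indeed, (i) and (ii) together give $\mathrm{CH}_1(X)=\mathbb Z\cdot[\ell]$, and $[\ell]$ is not torsion since $[\ell]\cdot H_X=1$, so $\mathrm{CH}_1(X)\simeq\mathbb Z$ and a fortiori $\mathrm{CH}_1(X)_{tors,AJ}=0$.

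Point (ii) is elementary and uniform in $n$: cutting $X$ by a general $\mathbb P^3\subset\mathbb P^{n+1}$ produces a smooth cubic surface $S\simeq\mathrm{Bl}_{p_1,\dots,p_6}\mathbb P^2$, and $H_X^{n-1}$ is the class of a hyperplane section $C$ of $S$ pushed forward to $X$; writing the hyperplane class of $S$ as $-K_S=3L-E_1-\cdots-E_6=(L-E_1-E_2)+(L-E_3-E_4)+(L-E_5-E_6)$, a sum of three of the $27$ lines of $S$, and pushing forward to $X$, (i) yields $H_X^{n-1}=3[\ell]$. Point (i) is the crux, and here the two sub-cases behave differently. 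For $n\geq 5$ it is soft: since $\frac{d(d+1)}{2}=6<n+2$, the variety $F(X)$ is smooth, connected and Fano, hence rationally connected over the characteristic-$0$ base field, so $\mathrm{CH}_0(F(X))=\mathbb Z$ and all lines of $X$ are rationally equivalent. For $n=4$, $F(X)$ is a hyperkähler fourfold and $\mathrm{CH}_0(F(X))$ is far from $\mathbb Z$; in this case (i) --- equivalently $\mathrm{CH}_1(X)\simeq\mathbb Z$, and since $H^3(X)=H^5(X)=0$ hence $J^5(X)=0$ this already equals the vanishing of $\mathrm{CH}_1(X)_{tors,AJ}$ --- is precisely the content of Shen's study of $1$-cycles on cubic fourfolds (\cite{Shen_main}). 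The main obstacle is thus the case $n=4$, which lies outside the reach of the osculating-line and inversion arguments of the previous sections (those concern $\mathrm{CH}_2(X)$ and require $F(X)$ rationally connected) and genuinely relies on Shen's more delicate analysis; the case $n=3$ likewise rests entirely on Bloch's theorem.
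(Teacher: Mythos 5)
Your cases $n=3$ and $n\geq 5$ are correct: for $n=3$ the statement is indeed an instance of Bloch's theorem on $\mathrm{CH}^2(Y)_{tors,AJ}$, and for $n\geq 5$ your combination of Theorem \ref{prop_gen_5} (with $i=r=1$) with the rational connectedness of the Fano variety $F(X)$ (so $\mathrm{CH}_0(F(X))=\mathbb Z$, hence all lines are rationally equivalent in $X$) and the cubic-surface-section identity $H_X^{n-1}=3[\ell]$ does give $\mathrm{CH}_1(X)\simeq \mathbb Z$. This is a legitimately different route from the paper, which instead runs a single argument uniformly for all $n\geq 3$.

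The gap is the case $n=4$, which is exactly the case this proposition exists for. You reduce it to the claim that any two lines on a smooth cubic fourfold are rationally equivalent, equivalently $\mathrm{CH}_1(X)\simeq\mathbb Z$, and you attribute this to \cite{Shen_main}. That attribution is not correct: Shen proves $\mathrm{CH}_1(X)\simeq\mathbb Z$ only in dimension $\geq 5$; in dimension $4$ his result is that homological and algebraic equivalence coincide on $1$-cycles, i.e. $\mathrm{Griff}_1(X)=0$. The latter only gives $\mathrm{CH}_1(X)_{tors,AJ}\subset \mathrm{CH}_1(X)_{alg}$, i.e. the vanishing of $\mathrm{CH}_1(X)_{tors,AJ}/alg$ (this is how the paper deduces $H^4_{nr}(X,\mathbb Q/\mathbb Z)=0$ for cubic fourfolds), not the vanishing of the group itself, and certainly not $\mathrm{CH}_1(X)\simeq\mathbb Z$, which is strictly stronger than the proposition you are trying to prove and is not available in the cited literature: on a cubic fourfold $F(X)$ is hyperk\"ahler, $\mathrm{CH}_0(F(X))$ is huge, and no soft argument identifies the classes in $\mathrm{CH}_1(X)$ of two given lines. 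The paper's proof is designed precisely to avoid this: given $\gamma\in\mathrm{CH}_1(X)_{tors,AJ}$, it writes $\gamma=C_1-C_2$ with $C_1,C_2$ smooth curves of the same genus and degree (Lemma \ref{prop_mm_genre}), applies the inversion formula of Theorem \ref{thm_form_single} to each $C_i$, compares the Hilbert squares via Proposition \ref{prop_hilb2} together with Totaro's torsion-freeness of $H^*(X^{[2]},\mathbb Z)$ (\cite{Totaro_hilb_sch}), and then kills the resulting torsion $0$-cycle on $F(X)$ using the triviality of $\mathrm{Alb}(F(X))$ for $n\geq 4$ (resp. the Clemens--Griffiths isomorphism $\mathrm{Alb}(F(X))\simeq J^3(X)$ for $n=3$) and Roitman's theorem (\cite{Roitman}). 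Without an argument of this type, or an actual proof that all lines on a cubic fourfold are rationally equivalent, your case $n=4$ remains unproven.
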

\begin{proof} For cubic threefolds, the proposition can be obtained as a consequence of the work of Bloch and Srinivas (\cite[Theorem 1]{Bl-Sr}) which asserts that $\mathrm{CH}_1(X)_{hom}\simeq \mathrm{CH}_1(X)_{alg}\simeq J^3(X)(\mathbb C)$. For cubic hypersurfaces of dimension $\geq 5$, the result follows from the work of Shen (\cite{Shen_main}) who proved that $\mathrm{CH}_1(X)\simeq \mathbb Z$. The only case left is the case of cubic $4$-folds but the following proof works for cubic hypersurfaces of any dimension $\geq 3$.\\
\indent Pick $\gamma\in \mathrm{CH}_1(X)_{tors,AJ}$. It is a numerically trivial $1$-cycle of $X$ so according to Lemma \ref{prop_mm_genre} we can write it as $\gamma=C_1-C_2$ where $C_i$ are smooth connected curves on $X$ of same genus $g$ and same degree $d$. Applying the inversion formula to $C_i$ yields: \begin{equation}\label{eq_rq} (2d-3)\gamma = P_*p_{P_2,*}i_{P_2}^*(C_2^{(2)}-C_1^{(2)})\ \mathrm{in\ CH}_1(X).
\end{equation} Since the $C_i$ have the same genus and $\gamma=C_1-C_2$ is torsion, by Proposition \ref{prop_hilb2}, $C_2^{(2)}-C_1^{(2)}\in \mathrm{CH}_2(X^{[2]})$ is torsion. As $H^*(X^{[2]},\mathbb Z)$ is torsion-free (\cite{Totaro_hilb_sch}), $C_2^{(2)}-C_1^{(2)}$ is homogically trivial. When $n=3$, $P_*$ yields an isomorphism $\mathrm{Alb}(F(X))\simeq J^3(X)$ (\cite{Cl-Gr}), so that applying Abel-Jacobi map to (\ref{eq_rq}), we see that $p_{P_2,*}i_{P_2}^*(C_2^{(2)}-C_1^{(2)})\in \mathrm{CH}_0(F(X))_{tors,AJ}$. When $n\geq 4$, as the Albanese variety of $F(X)$ is trivial (\cite[Proposition 3]{BD}, \cite{Deb-Man}), $p_{P_2,*}i_{P_2}^*(C_2^{(2)}-C_1^{(2)})\in \mathrm{CH}_0(F(X))_{tors,AJ}$ and we conclude by Roitman theorem (\cite{Roitman}).
\end{proof}
%We are able to prove the vanishing of $\mathrm{CH}_1(X)_{tors,AJ}$ for cubic $4$-folds but the proof presented here hinges on quite subtle arguments (inversion formula, torsion-freeness of $H^*(X^{[2]},\mathbb Z)$...). So $\mathrm{CH}_1(X)_{tors,AJ}$ might not be trivial for all rationally connected varieties.

\section*{Acknowledgments}
I am grateful to my advisor Claire Voisin for having brought to me this interesting question, as well as her kind help and great and patient guidance during this work. I am also grateful to Mingmin Shen for pointing me a deficiency in the original proof of Theorem \ref{thm_torsion}. I would like to thank Jean-Louis Colliot-Th\'el\`ene for sharing with me the reference \cite{KS} where the vanishing of $H_{nr}^4(X,\mathbb Q/\mathbb Z)$ when $X$ is a cubic $5$-fold is proved. Finally, I am grateful to the gracious Lord for His care. 

\noindent \begin{tabular}[t]{l}
\textit{rene.mboro@polytechnique.edu}\\
CMLS, Ecole Polytechnique, CNRS, Universit\'e Paris-Saclay\\
91128 PALAISEAU C\'edex,\\
FRANCE.\\
\end{tabular}\\
\end{document}